\renewcommand{\theequation}                            
       {\mbox{\arabic{section}.\arabic{equation}}}
\newcommand{\origsetminus}{} \let\origsetminus=\setminus           
\renewcommand{\setminus}{\!\origsetminus\!}
\theoremstyle{plain}
\newtheorem{definition}{Definition}[section]
\newtheorem{lemma}[definition]{Lemma}
\newtheorem{theorem}[definition]{Theorem}
\newtheorem{corollary}[definition]{Corollary}
\newtheorem{proposition}[definition]{Proposition}
\newtheorem{example}[definition]{Example}
\newtheorem{remark}[definition]{Remark}
\renewcommand{\mathbb}{\mathbbm}                     
\renewcommand{\epsilon}{\varepsilon}                 
\renewcommand{\phi}{\varphi}
\renewcommand{\theta}{\vartheta}
\renewcommand{\le}{\leqslant}
\renewcommand{\ge}{\geqslant}
\newcommand{\origfoo}{} \let\origfoo=\sqrt           
\renewcommand{\sqrt}[1]{\origfoo{#1}\;}
\renewcommand{\O}{{\mathcal O}}                      
\newcommand{\abs}[1]{\left\lvert #1 \right\rvert}    
\newcommand{\norm}[1]{\left\lVert #1 \right\rVert}   
\DeclareMathOperator{\R}{{\mathbb R}}                
\DeclareMathOperator{\Rp}{{\mathbb R}_+}             
\DeclareMathOperator{\C}{{\mathbb C}}                
\DeclareMathOperator{\N}{{\mathbb N}}                
\DeclareMathOperator{\Z}{{\mathbb Z}}                
\newcommand{\Zp}{\mathbb{Z}_{+}}
\newcommand{\A}{{\mathcal A}}
\DeclareMathOperator{\Borel}{{\mathcal B}}
\renewcommand{\S}{{\mathcal S}}
\newcommand{\scapro}[2]{\langle #1,#2\rangle}       
\newcommand{\Scapro}[2]{[ #1,#2 ]}       
\DeclareMathOperator{\1}{\mathbbm 1}
\newcommand{\Besov}[3]{B^{#1}_{#2,#3}(\R^d)}	
\newcommand{\besov}[3]{B^{#1}_{#2,#3}}	
\renewcommand{\d}{{\mathrm d}}
\DeclareMathOperator{\leb}{{\rm leb}}
\newcommand\restr[2]{{
  \left.\kern-\nulldelimiterspace 
  #1 
  \vphantom{\big|} 
  \right|_{#2} 
  }}
\title{Regularisation of cylindrical L\'evy processes in Besov spaces}
\author{Matthew Griffiths}
\author{Markus Riedle}
\affil{Department of Mathematics \\ King's College  \\ London WC2R 2LS\\ United Kingdom}
\begin{document}
\maketitle
\begin{abstract}
In this work, we quantify the irregularity of a given cylindrical L\'evy process $L$ in $L^2(\R^d)$ by
determining the range of weighted Besov spaces $B$ in which $L$ has a regularised version $Y$, that is a 
stochastic process $Y$ in the classical sense with values in $B$. Our approach is based on characterising L\'evy measures on Besov spaces. As a by-product, we determine those Besov spaces $B$ for which the embedding of $L^2(\R^d)$ into $B$
is $0$-Radonifying and $p$-Radonifying  for $p>1$. 
\end{abstract}
\noindent
{\bf AMS 2010 Subject Classification:}  60G20, 47B10, 60H25, 60G51, 60E07. \\
{\bf Keywords and Phrases:}  cylindrical processes; generalised processes; Radonifying operators; regularisation;  Besov spaces. \\

\section{Introduction}
Cylindrical L\'evy processes are a natural generalisation of cylindrical Brownian motions or equivalently of Gaussian space-time white noises. Being cylindrical processes, they generally do not attain values in the underlying space. This fact may cause some surprising phenomena such as highly irregular paths of solutions for linear evolution equations driven by a cylindrical L\'evy process $L$, which is observed for example by Brze\'zniak and co-authors in  \cite{Brzezniak-etal} or Priola and Zabczyk in \cite{Priola2011}, and which is related to the cylindrical distribution of $L$ by Kumar and one of us in \cite{Kumar2018}. 

In this work, we initiate a new type of research question by quantifying the irregularity of cylindrical L\'evy processes in terms of its cylindrical distribution. For this purpose, we determine the range of Besov spaces $B$ in which a given cylindrical L\'evy process $L$ in the Hilbert space  $L^2(\R^d)$ of square integrable functions becomes a L\'evy process $Y$ in the classical sense, i.e.\ a stochastic process attaining values in the Besov space $B$. In this case, the classical L\'evy process $Y$ is called the {\em regularised} version of $L$ in $B$, a notion introduced by It\^o and Nawata in the work \cite{Ito-Nawata}. 
This approach is motivated by the following observation: by embedding the Hilbert space  $L^2(\R^d)$ into the space $S^\ast(\R^d)$ of tempered distributions, each cylindrical L\'evy process in $L^2(\R^d)$ becomes a classical L\'evy process in $S^\ast(\R^d)$ due to Minlos' theorem; see Fonseca-Mora \cite{Fonseca-Mora2018}. Besov spaces lie between $L^2(\R^d)$ and $S^\ast(\R^d)$, and thus offer a natural and sufficiently fine scale to quantify the irregularity of a cylindrical L\'evy process $L$ by determining those Besov spaces in which $L$ becomes a classical L\'evy process if they exist.

Besov spaces are a natural extension of H\"older-Zygmund and fractional Sobolev spaces, see Chapter 1 of the monograph \cite{Triebel2006} by Triebel for a comprehensive introduction, and have been extensively applied to measure smoothness of functions, e.g.\ of solutions of partial differential equations. In the probabilistic setting, they have been used to analyse regularity of sample paths for finite- and infinite dimensional stochastic processes, starting with the publications   \cite{Herren1997} by Herren and \cite{Schilling1997} by Schilling. However, our investigation is fundamentally different as we apply Besov spaces as a scale of spaces between $L^2(\R^d)$ and $S^\ast(\R^d)$  in which a cylindrical random variable in $L^2(\R^d)$ may become a classical random variable.

The line of research closest to ours originates from the consideration of image processing and appears in the publications  \cite{Aziznejad2018,Dalang2015,Fageot2016,Fageot2017a} by Dalang, Unser, Fageot and co-authors, in which the authors investigate different aspects of a L\'evy-type model in $S^\ast(\R^d)$. In order to characterise local smoothness and the asymptotic growth in \cite{Aziznejad2018}, 
they determine the Besov spaces in which the L\'evy-type model attains values almost surely. Although this result is related to our investigation, it only applies to the subclass of cylindrical L\'evy processes in $L^2(\R^d)$ which corresponds to the considered L\'evy-type model in $S^\ast(\R)$; this subclass is shown to be very special  in our previous work \cite{Griffiths2019}, requiring the corresponding cylindrical L\'evy processes to be stationary in time {\em and} in space. 

In order to derive results on the regularisation of cylindrical L\'evy processes in Besov spaces,
our first task is to characterise the L\'evy measures in  Besov spaces.
In most Banach spaces apart from Hilbert spaces, an explicit characterisation of L\'evy measures is not known. One of the exceptions are L\'evy measures on the sequence spaces $\ell^p$ due to a result by Yurinskii in \cite{Yurinskii1974}. 
Using the wavelet characterisation of  Besov spaces, the result by Yurinskii will enable us to derive the characterisation of L\'evy measures on Besov spaces.

Having characterised the L\'evy measures on  Besov spaces, we  turn our attention to the regularisation question. Given a (non-Gaussian) cylindrical L\'evy process $L$ in $L^2(\R^d)$, we give sharp results for when $L$ becomes a classical L\'evy process  in a fixed Besov space $B$. Our technique is to study when the cylindrical L\'evy measure of $L$ may be extended to a $\sigma$-additive measure on the Borel $\sigma$-algebra in $B$ which is a L\'evy measure on $B$; in this case, we are then able to show the existence of a regularised version of $L$ in $B$; see Definition \ref{def:Induced} for the precise notion of the latter. 

The result on the existence of a regularised version in a Besov space $B$ of a cylindrical L\'evy process in $L^2(\R^d)$ is
closely related to the so-called 0-Radonifying property of the embedding $\iota$ of $L^2(\R^d)$ into $B$. Here, $\iota$ is called $0$-Radonifying if the image of each cylindrical random variable in $L^2(\R^d)$ under $\iota$ becomes a classical random variable in $B$. If this is the case, it is an easy conclusion, that each cylindrical L\'evy process in $L^2(\R^d)$ has a regularised version in $B$. The result on the regularisation of cylindrical L\'evy process and its application to specific examples enable us to exactly characterise those Besov spaces for which the embedding $\iota$ is $0$-Radonifying 
or $p$-Radonifying; the latter is a specialisation of $0$-Radonifying under some moment conditions.

Our definition of and the fundamental results we shall need regarding Besov spaces are presented in Section \ref{sec:Besov}.
In Section \ref{sec:LMandLPinBesov}, we characterise the L\'evy measures on Besov spaces.
In Section \ref{sec:Regularisation} we 
give a general characterisation of when there exists a regularised version in a specific Besov space for a cylindrical L\'evy processes in $L^2(\R^d)$. We finish the presentation in this section by characterising Radonifying embeddings of $L^2$ into  Besov spaces.
Finally, in Section \ref{sec:Applications}, we study in depth two important examples of cylindrical L\'evy process: the canonical symmetric-$\alpha$-stable process, and the cylindrical L\'evy process representable as an infinite sum of independent one-dimensional L\'evy processes, which we call the \emph{hedgehog process}. In both cases we present a full characterisation of the parameter set where the cylindrical L\'evy process has and has not a regularised version in a specific Besov space.
\subsubsection*{Notation}

We take $\N=\{1,2,\ldots\}$ and $\Zp=\N\cup\{0\}$ and $\Rp=\{x\in\R\colon x\ge 0\}$. All vector spaces are over $\mathbb{R}$.
We shall assume throughout the text that we are working in $\R^d$ for a fixed dimension $d$, and we fix a probability space $(\Omega,\A,P)$. 
Sequences are referred to by $(x_i)_{i\in I}$; stochastic processes are denoted $(f(i)\colon i\in I)$.

Given a normed space $(U,\norm{\cdot}_U)$, we use the notation $B_{U}:=\{f\in U\colon  \norm{f}_{U} \le 1\}$ for the closed unit ball in $U$. 
For a topological vector space $(T,\tau)$, we denote the Borel $\sigma$-algebra generated by the open subsets of $T$ by $\Borel(T)$ and we denote the continuous (topological) dual space by $T^*$, referred to henceforth simply as the dual. The dual pairing is denoted $\scapro{t}{t^\ast}_T$ for $t\in T, t^\ast\in T^\ast$. 
For topological vector spaces $T$ and $S$ we denote the continuous linear operators from $T$ to $S$ by $\mathcal{L}(T,S)$ and $\mathcal{L}(T):=\mathcal{L}(T,T)$.

Given a measure space $(S,\mathcal{A},\mu)$, the space of $\mu$-equivalence classes of 
measurable functions $f\colon S\to\R$ is denoted by $L^0(S, \mu)$, and of 
$p$-th integrable functions by $L^p(S, \mu)$ for $p>0$. For a Borel measure $\mu$ on $S$ we define the reflected measure $\mu^-$ by $\mu^-(A):=\mu(-A)$ for each $A\in\Borel(S)$.
The Lebesgue measure on $\Borel(\R^d)$ is denoted by $\leb$. For the case $L^p(\R^d,\leb)$ we shall just write $L^p(\R^d)$. 
We equip $L^0(S,\mu)$ with the topology of convergence in measure, and we equip the spaces $L^p(S, \mu)$ for $p>0$ with their standard metrics and (quasi-)norms, denoted $\norm{\cdot}_{L^p(S,\mu)}$ or, where there is no risk of confusion, $\norm{\cdot}_{L^p}$. For $p>1$ we define $p'=\tfrac{p}{p-1}$ to be the conjugate of $p$ with the usual modification for $p\in\{1,\infty\}$.

For $a\in\Zp\cup\{\infty\}$ and an open set $B\subseteq\R^d$ we denote by $C^a(B)$ the set of real-valued bounded uniformly continuous functions on $B$ with bounded uniformly continuous $a$-th derivative, where $C(B)=C^0(B)$ denotes the bounded uniformly continuous functions without reference to differentiability, and $a=\infty$ denotes the functions with bounded uniformly continuous derivatives of all orders. Furthermore, $C_c^a(B)$ denotes the subset of $C^a(B)$ with compact support within $B$.

We shall write $a\lesssim b$ to mean that there exists a positive constant $C$ such that $a\le Cb$. If the constant $C$ depends on the parameters $p_1,\ldots,p_n$, we shall also write $C=C(p_1,\ldots,p_n)$ and $\lesssim_{p_1,\ldots p_n}$. 
The expression $a\eqsim b$ is equivalent to $a\lesssim b\lesssim a$.

\section{Weighted Besov Spaces}
\label{sec:Besov}
The weighted Besov spaces $\Besov{p}{s}{w}$ with $0<p\le\infty$ and $s,w\in\R$ have many equivalent definitions, see \cite{Triebel2006,Triebel2008} for a comprehensive treatment. We shall use a definition based upon wavelets.
As we are focused on separable reflexive Banach spaces in this work, we shall use the scale $1<p<\infty$. 

Let $\S(\R^d)$ denote the Schwartz space of rapidly decreasing functions on $\R^d$, that is
	\begin{align*}
		\S(\R^d)
		:=\big\lbrace
			f\in C^{\infty}(\R^d)\colon
			\norm{f}_{\S_r} < \infty \text{ for all } r\in\Zp
		\big\rbrace, 
	\end{align*}
where the seminorms $\norm{\cdot}_{\S_r}$, $r\in\Zp$, are defined by 
\begin{align}
\label{eq:SchwartzSeminorm}
	\norm{f}_{\S_r}
		:= \max_{\abs{s}\le r}\sup_{x\in\R^d} (1+\abs{x}^2)^r \abs{\partial^s f(x)},
\end{align}
with $s=(s_1,\ldots, s_d)\in\mathbb{Z}_+^d$ and $
\partial^s:=\partial^{\abs{s}}/(\partial x_1^{s_1}\cdots\partial x_d^{s_d})$. 
With the topology generated by the family of seminorms $(\norm{\cdot}_{\S_r})_{r\in\Zp}$, the space $\S(\R^d)$ is metrisable, and $f_n\to f$ in $\S(\R^d)$ means $\norm{f_n-f}_{\S_r}\to 0$ for each $r\in\Zp$. 
The dual space of $\S(\R^d)$ is the space $\S^\ast(\R^d)$ of tempered distributions, which we shall equip with the strong topology; that is the topology generated by the family of seminorms $\{\eta_B\}$, where for each bounded $B\subseteq \S(\R^d)$ we define $\eta_B(f):=\sup_{\phi\in B}\abs{\scapro{\phi}{f}}$ for $f\in\S^{\ast}(\R^d)$. With this topology, $\S(\R^d)$ is reflexive and separable and densely embedded in $\S^\ast(\R^d)$; see \cite[Th.\ V.14~Cor.~1]{ReedSimon1980}. 
The duality in $\S(\R^d)$ is denoted by $\Scapro{f}{g}$ for $f\in\S(\R^d)$ and $g\in\S^\ast(\R^d)$. 

The spaces $L^p(\R^d)$ for $1\le p\le\infty$ may be interpreted as subspaces of $\S^\ast(\R^d)$ in the following manner: for each $g\in L^p(\R^d)$ the map
\begin{align*}
	f\mapsto \int_{\R^d}f(x)g(x)\,\d x,
	\qquad f\in\S(\R^d),
\end{align*}
i.e.\ the duality in $L^{p'}(\R^d)$, is finite for all $f\in\S(\R^d)$ by H\"older's inequality. 
In this manner we identify each $g\in L^p(\R^d)$ with a functional in $\S^\ast(\R^d)$, which we shall also refer to as $g$.
Furthermore we extend this identification to the dual $B^\ast$ of
any Banach space $B$ in which $\S(\R^d)$ is densely embedded, and
we identify $\scapro{f}{g}_B\equiv\Scapro{f}{g}$ for each $f\in\S(\R^d)$ and $g\in B^\ast$. 
Finally, we shall extend the definition of $\Scapro{f}{g}$  as follows: if there exists a Banach space $B$ in which $\S(\R^d)$ is dense and $f\in B$ and $g\in B^\ast$ then we define $\Scapro{f}{g}:=\scapro{f}{g}_B$, otherwise we take $\Scapro{f}{g}:=\infty$.
However, this interpretation means that we do \emph{not} identify Hilbert spaces with their duals, except in the case of $L^2(\R^d)$.  

We shall define the weighted Besov spaces $\Besov{p}{s}{w}$ for $p>1$ and $s,w\in\R$ in terms of wavelet bases of $L^2(\R^d)$, for which we follow  \cite[Sec.~1.2.3]{Triebel2008}.
We define subsets $G^j\subseteq \{0,1\}^d, j\in\Zp$ by
\begin{align*}
	G^j:=
	\begin{cases}
		\{0,1\}^d,&\text{if }j=0,\\
		\{G=(G_1,\ldots,G_d)\colon G_i=1 \text{ for at least one }i\},&\text{if }j\ge 1.
	\end{cases}
\end{align*}
Suppose we are given $\big(\Psi^G_0\big)_{G\in G^0}\subseteq C_c(\R^d)$ which form an orthonormal set in $L^2(\R^d)$, which we shall call the \emph{parent wavelets}. Then, for each $j\in\Zp, G\in G^j$ and $m\in\Z^d$, we define 
\begin{align*}
	\Psi^{j,G}_m(x):=
	2^{jd/2} \Psi^G_m(2^{j}x):=
	2^{jd/2} \Psi^G_0(2^{j}x-m),
	\qquad x\in\R^d.
\end{align*}
It is known that for any $r\in\N$, there exist such parent wavelets $\big(\Psi^G_0\big)_{G\in G^0}\subseteq C^r_c(\R^d)$ such that $\Psi:=\{\Psi^{j,G}_m\colon j\in\Zp, G\in G^j, m\in\Z^d\}$ forms an orthonormal basis in $L^2(\R^d)$; see \cite[Th.~1.61]{Triebel2006}; one example is the Daubechies wavelets \cite{Daubechies1992}.
In this case,  $\Psi$ is called a \emph{wavelet basis of $L^2(\R^d)$}.
Henceforth, we shall refer to the wavelet index set
\begin{align}
\label{eq:WaveletIndex}
	\mathbb{W}^d:=\left\lbrace (j,G,m)\colon  j\in\Zp, G\in G^j, m\in\Z^d \right\rbrace.
\end{align}
Since $\mathbb{W}^d$ is countable, we may consider in the usual manner the space $\ell^p(\mathbb{W}^d)$ of $p$-summable sequences with index set $\mathbb{W}^d$.
For the purposes of defining the weighted Besov space $\Besov{p}{s}{w}$, we shall require a minimum smoothness of the wavelet basis depending on the dimension $d$ and the parameters $p$ and $s$.

\begin{definition}
\label{def:AdmissibleBasis}
	Let $p>1$ and $s$, $w\in\R$. 
	A wavelet basis $\Psi=\{\Psi^{j,G}_m\colon (j,G,m)\in\mathbb{W}^d\}$ of $L^2(\R^d)$ is called an \emph{admissible basis of $\Besov{p}{s}{w}$} if $\Psi\subseteq C_c^r(\R^d)$ for some integer $r>\abs{s}$.
\end{definition}

We begin with defining the \emph{weighted Besov sequence space} $b^{p}_{s,w}$ for $p>1$ and $s$, $w\in\R$, for which we 
 introduce the weight constants:
\begin{align}
\label{eq:Weights}
	\omega_m^j=\omega_m^j(p,s,w):=2^{j(s-\frac{d}{p}+\frac{d}{2})}(1+2^{-2j}\abs{m}^2)^{\frac{w}{2}},
\end{align}
for each $m\in\Z^d$ and $j\in\Zp$.
We define $b^{p}_{s,w}$
as the vector space of real-valued sequences $(\lambda^{j,G}_m)_{(j,G,m)\in\mathbb{W}^d}$ 
such that
\begin{align*}
	\norm{\lambda}_{b^{p}_{s,w}}
	:= \Bigg(\sum_{j\in\Zp}\sum_{G\in G^j}\sum_{m\in\Z^d}\abs{2^{-\frac{jd}{2}}\omega_m^j\lambda^{j,G}_m}^p\Bigg)^{1/p}
	<\infty.
\end{align*}
The space $(b^{p}_{s,w},\norm{\cdot}_{b^{p}_{s,w}})$ forms a Banach space if $p>1$ and is a Hilbert space for $p=2$.

 Let $\Psi$ be an admissible basis of $\Besov{p}{s}{w}$ for some  $p>1$ and $s$, $w\in\R$. The \emph{weighted Besov space} $\Besov{p}{s}{w}$ is defined to be
\begin{align}\label{de.Besov}
	\Besov{p}{s}{w}:=\left\lbrace
		f\in\mathcal{S}^{\ast}(\R^d)\colon 
		f=\sum_{j\in\Zp}\sum_{G\in G^j}\sum_{m\in\Z^d}\lambda^{j,G}_m 2^{-\frac{jd}{2}}\Psi^{j,G}_m,\,
		\lambda\in b^{p}_{s,w}
	\right\rbrace,
\end{align}
where the sum is unconditionally convergent in $\mathcal{S}^{\ast}(\R^d)$. 
When this holds, the associated sequence $\lambda$ is unique and we 
have $\lambda^{j,G}_m=2^{\frac{jd}{2}}\Scapro{\Psi^{j,G}_m}{f}$. 
A consequence of $\Psi$ being an admissible basis of $\Besov{p}{s}{w}$ is that the wavelets are of sufficient smoothness to guarantee that they are in $\big(\Besov{p}{s}{w}\big)^\ast$, and so the dual pairing makes sense.
As the sums over $j,G$ and $m$ are unconditional in the definitions of both the weighted Besov spaces and the weighted Besov sequence spaces, we will henceforth use the simpler notation $\sum_{j,G,m}$ to mean $\sum_{j\in\Zp}\sum_{G\in G^j}\sum_{m\in\Z^d}$.
We may norm $\Besov{p}{s}{w}$ by taking $\norm{f}_{\besov{p}{s}{w}}:=\norm{\lambda}_{b^{p}_{s,w}}$, giving
\begin{align}
\label{eq:NormB}
	\norm{f}_{\besov{p}{s}{w}} = 
	\Bigg(\sum_{j,G,m} (\omega_m^j)^p\abs{\Scapro{\Psi^{j,G}_m}{f}}^p
	\Bigg)^{1/p}.
\end{align}
It follows that $\Besov{p}{s}{w}$ is a Banach space for $p>1$ and a Hilbert space for $p=2$.
We immediately see from this definition that $\Besov{2}{0}{0}=L^2(\R^d)$, consistently with the relation described in the introduction.

\begin{remark}
	The Besov space scale typically has an auxiliary parameter $q$.
	However, we shall not be using the $q$ parameter in this work; 
this is due to the fact that the embedding theorems available in weighted Besov spaces (see Proposition 3 in \cite{Fageot2016}) show the continuous embedding of $\Besov{p,q}{s}{w}$ into $\Besov{p,p}{s-\epsilon}{w}$ for any $q\in\R$ and $\epsilon>0$. The results presented in this work are generally expressed as strict inequalities on the Besov space parameters, and as such are unaffected by the arbitrarily small change in the $s$ parameter needed to incorporate any $q$ parameter. 
\end{remark}

The dual spaces for the classical unweighted Besov spaces are well-known: $\big(\Besov{p}{s}{0}\big)^\ast=\Besov{p'}{-s}{0}$ for $p>1$ and $s\in\R$; see e.g. \cite[p.~179]{Triebel1983}. We may easily generalise to the weighted case to state the following result:
\begin{theorem}
\label{thm:BesovDuality}
	Let $p>1$ and $s$, $w\in\R$. The dual space $\big(\Besov{p}{s}{w}\big)^\ast$ may be identified with $\Besov{p'}{-s}{-w}$
	with $p':=\tfrac{p}{p-1}$, and the duality given by
	\begin{align}
	\label{eq:BesovDuality}
		\scapro{f}{g}_{\besov{p}{s}{w}}=\Scapro{f}{g}
		= \sum_{j,G,m}\Scapro{\Psi_m^{j,G}}{f}\Scapro{\Psi_m^{j,G}}{g},
	\end{align}
	where $\Psi$ is any admissible basis for $\Besov{p}{s}{w}$ (and thus is an admissible basis for $\Besov{p'}{-s}{-w}$).
\end{theorem}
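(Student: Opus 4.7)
The plan is to transport the duality via the wavelet isomorphism to the sequence space level, where it reduces to the classical identification $(\ell^p)^\ast=\ell^{p'}$. By the very definition of $\Besov{p}{s}{w}$ in \eqref{de.Besov}, the coefficient map
\begin{align*}
	T\colon f\mapsto \lambda(f):=\left(2^{jd/2}\Scapro{\Psi^{j,G}_m}{f}\right)_{(j,G,m)\in\mathbb{W}^d}
\end{align*}
is an isometric isomorphism of $\Besov{p}{s}{w}$ onto $b^p_{s,w}$. Since $\abs{-s}=\abs{s}$, the same $\Psi$ is admissible for $\Besov{p'}{-s}{-w}$, and so the analogous map identifies $\Besov{p'}{-s}{-w}$ with $b^{p'}_{-s,-w}$. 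Hence it is enough to establish the sequence space duality $(b^p_{s,w})^\ast=b^{p'}_{-s,-w}$ under a suitable bilinear pairing.

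For this, the rescaling $\lambda^{j,G}_m\mapsto 2^{-jd/2}\omega^j_m(p,s,w)\lambda^{j,G}_m$ is by construction an isometric isomorphism $b^p_{s,w}\cong\ell^p(\mathbb{W}^d)$, and the analogous rescaling gives $b^{p'}_{-s,-w}\cong\ell^{p'}(\mathbb{W}^d)$. Combined with the standard $(\ell^p)^\ast=\ell^{p'}$ duality these yield $(b^p_{s,w})^\ast=b^{p'}_{-s,-w}$, under the induced pairing
\begin{align*}
	\scapro{\lambda}{\nu}
	&=\sum_{j,G,m}\bigl(2^{-jd/2}\omega^j_m(p,s,w)\lambda^{j,G}_m\bigr)\bigl(2^{-jd/2}\omega^j_m(p',-s,-w)\nu^{j,G}_m\bigr)\\
	&=\sum_{j,G,m} 2^{-jd}\lambda^{j,G}_m\nu^{j,G}_m,
\end{align*}
where the key simplification rests on the weight identity
\begin{align*}
	\omega^j_m(p,s,w)\,\omega^j_m(p',-s,-w)
	= 2^{j(s-d/p+d/2)+j(-s-d/p'+d/2)}
	= 2^{jd(1-1/p-1/p')}=1,
\end{align*}
which holds because $1/p+1/p'=1$ and causes both the dyadic and the polynomial weight factors to cancel. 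H\"older's inequality in $\ell^p/\ell^{p'}$ ensures absolute convergence of the sum, while surjectivity of the pairing (every bounded functional arises this way) is inherited from $\ell^p$.

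Substituting the coefficient definitions $\lambda^{j,G}_m=2^{jd/2}\Scapro{\Psi^{j,G}_m}{f}$ and $\nu^{j,G}_m=2^{jd/2}\Scapro{\Psi^{j,G}_m}{g}$ into the sequence pairing reproduces exactly the wavelet sum in \eqref{eq:BesovDuality}, and pulling it back through $T$ and its analogue yields the identification $(\Besov{p}{s}{w})^\ast=\Besov{p'}{-s}{-w}$. What remains is to verify that this Banach space duality is consistent with the tempered-distribution pairing $\Scapro{f}{g}$ in the sense stipulated in Section~2, so that the notations $\scapro{f}{g}_{\besov{p}{s}{w}}$ and $\Scapro{f}{g}$ truly agree on the overlap. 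This consistency check is the only mildly delicate step; I would verify it first on the dense subspace of finite wavelet combinations, where both sides collapse to finite sums that obviously coincide with the Schwartz pairing (as $\Psi^{j,G}_m\in C^r_c\subseteq\S(\R^d)$), and then pass to the limit using continuity of $g$ on $\Besov{p}{s}{w}$ together with the unconditional convergence of the expansion $f=\sum 2^{-jd/2}\lambda^{j,G}_m(f)\Psi^{j,G}_m$ in $\S^\ast(\R^d)$.
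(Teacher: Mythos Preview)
Your approach is correct and is in fact more detailed than what the paper itself provides: the paper does not prove this theorem at all, but merely cites the unweighted case $\big(\Besov{p}{s}{0}\big)^\ast=\Besov{p'}{-s}{0}$ from Triebel and asserts that the weighted case follows as an ``easy generalisation''. Your argument via the wavelet isometry to $b^p_{s,w}$, then to $\ell^p(\mathbb{W}^d)$, together with the weight identity $\omega^j_m(p,s,w)\,\omega^j_m(p',-s,-w)=1$, is exactly the natural way to make that generalisation precise.

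One small slip to fix: in your final consistency check you write ``as $\Psi^{j,G}_m\in C^r_c\subseteq\S(\R^d)$'', but this inclusion is false for finite $r$, since Schwartz functions must be $C^\infty$. The wavelets are compactly supported $C^r$ functions, not Schwartz functions. This does not break your argument: the point you need is that each $\Psi^{j,G}_m$ lies in both $\Besov{p}{s}{w}$ and $\Besov{p'}{-s}{-w}$ (which the admissibility condition $r>\abs{s}$ guarantees), so the pairing $\Scapro{\Psi^{j,G}_m}{g}$ is well-defined via the paper's extended convention for $\Scapro{\cdot}{\cdot}$. Alternatively, you can run the density argument with genuine Schwartz functions, which are dense in every $\Besov{p}{s}{w}$, rather than with finite wavelet sums.
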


Weighted Besov spaces form various scales according to the parameters $p, s$ and $w$; we present a general result for their continuous embeddings. The positive results, for when a certain Besov space is continuously embedded in another given Besov space, are well-known; however, we are unaware of any converse results so we develop such converses here.

\begin{proposition}
\label{prop:BesovEmbeddings}
	Let $s_0,s_1,w_0,w_1\in\R$ and $p_0,p_1>1$.
	\begin{enumerate}
	\item[{\rm (1)}] Suppose $p_0> p_1$. Then $\Besov{p_0}{s_0}{w_0} \hookrightarrow \Besov{p_1}{s_1}{w_1}$ continuously if and only if
		\begin{align*}
			s_0>s_1 
			\qquad\text{and}\qquad 
			w_0-w_1 > d\left(\frac{1}{p_1}-\frac{1}{p_0}\right).
		\end{align*}
	\item[{\rm (2)}]  Suppose $p_0\le p_1$. Then $\Besov{p_0}{s_0}{w_0} \hookrightarrow \Besov{p_1}{s_1}{w_1}$ continuously if and only if
		\begin{align*}
			s_0-s_1\ge d\left(\frac{1}{p_0}-\frac{1}{p_1}\right)
			\qquad\text{and}\qquad 
			w_0\ge w_1.
		\end{align*}
	\end{enumerate}
\end{proposition}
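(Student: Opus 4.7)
The plan is to translate each embedding into a diagonal multiplier question on sequence spaces using the wavelet representation (\ref{de.Besov}). Fix once and for all a wavelet basis $\Psi$ that is admissible for all four weighted Besov spaces in play. The map $f\mapsto\lambda$ then realises an isometric isomorphism $\Besov{p_i}{s_i}{w_i}\cong b^{p_i}_{s_i,w_i}$. Substituting $\mu^{j,G}_m:=2^{-jd/2}\omega^j_m(p_0,s_0,w_0)\lambda^{j,G}_m$ converts the $b^{p_0}_{s_0,w_0}$-norm into the unweighted $\ell^{p_0}$-norm of $\mu$, so the candidate embedding becomes the boundedness of the diagonal multiplier $M_c\colon\ell^{p_0}(\mathbb{W}^d)\to\ell^{p_1}(\mathbb{W}^d)$ with symbol
\begin{align*}
c^{j,G}_m = \frac{\omega^j_m(p_1,s_1,w_1)}{\omega^j_m(p_0,s_0,w_0)} = 2^{j[(s_1-s_0)+d(1/p_0-1/p_1)]}\bigl(1+2^{-2j}\abs{m}^2\bigr)^{(w_1-w_0)/2}.
\end{align*}

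For part (1), with $p_0>p_1$, I would invoke the classical characterisation that $M_c\colon\ell^{p_0}\to\ell^{p_1}$ is bounded if and only if $c\in\ell^r$ with $1/r:=1/p_1-1/p_0$; sufficiency is H\"older's inequality and necessity is obtained by testing on truncations of a near-extremal sequence. The analysis of $\norm{c}_{\ell^r}^r$ then decouples: for each fixed $j$, comparison with the corresponding integral gives $\sum_{m\in\Z^d}(1+2^{-2j}\abs{m}^2)^{r(w_1-w_0)/2}\eqsim 2^{jd}$ for $j\ge 1$ provided $r(w_1-w_0)<-d$, equivalently $w_0-w_1>d(1/p_1-1/p_0)$, and diverges otherwise. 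Feeding this estimate back into the $j$-sum and using the cancellation $rd(1/p_0-1/p_1)=-d$ collapses the $j$-exponent to $jr(s_1-s_0)$, so the remaining geometric series converges if and only if $s_0>s_1$. For part (2), with $p_0\le p_1$, the contractive inclusion $\ell^{p_0}\hookrightarrow\ell^{p_1}$ reduces boundedness of $M_c$ to $c\in\ell^\infty$: the dyadic factor is uniformly bounded in $j\in\Zp$ iff $s_0-s_1\ge d(1/p_0-1/p_1)$ (binding at $m=0$), while the polynomial factor is uniformly bounded in $(j,m)$ iff $w_0\ge w_1$ (binding at $j=0$, $\abs{m}\to\infty$).

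For necessity in both parts, a self-contained route avoiding the multiplier theorem is to test on sequences supported at a single index $(j,G,m)$, whose $b^p_{s,w}$-norm equals $2^{-jd/2}\omega^j_m(p,s,w)$: this already delivers the two pointwise conditions in part (2), while spread sequences supported either at a single $j$ with varying $m$, or at varying $j$ with $m=0$, together with a logarithmic borderline construction, force the two strict inequalities in part (1). The main technical obstacle I anticipate is the uniform-in-$j$ lattice-sum estimate $\sum_{m\in\Z^d}(1+2^{-2j}\abs{m}^2)^{\alpha}\eqsim 2^{jd}$ for $\alpha<-d/2$, which requires separate treatment of small and large $j$ and is precisely what underpins the strict-versus-non-strict dichotomy between the two parts.
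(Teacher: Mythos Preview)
Your proposal is correct and follows essentially the same route as the paper: both reduce the question, via the wavelet isometry with $b^{p}_{s,w}\cong\ell^p(\mathbb{W}^d)$, to the analysis of the diagonal weight ratio $c^{j,G}_m=\omega^j_m(p_1,s_1,w_1)/\omega^j_m(p_0,s_0,w_0)$, and both rest on the same lattice-sum estimate $\sum_{m}(1+2^{-2j}|m|^2)^{\alpha}\eqsim 2^{jd}$ for $\alpha<-d/2$. The only difference is packaging: you invoke the classical multiplier characterisation ($M_c\colon\ell^{p_0}\to\ell^{p_1}$ bounded iff $c\in\ell^r$ for $p_0>p_1$, respectively $c\in\ell^\infty$ for $p_0\le p_1$) as a black box, whereas the paper cites \cite{Fageot2016} for sufficiency and, for necessity, unfolds the proof of that multiplier theorem inline by a duality construction---showing $(c^{p_1})\notin\ell^{p_0/(p_0-p_1)}$ and then picking a witnessing $y\in\ell^{p_0/p_1}$ to build an explicit $\lambda\in b^{p_0}_{s_0,w_0}\setminus b^{p_1}_{s_1,w_1}$.
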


In order to prove this Proposition, we shall first prove an intermediary result about the isomorphism between weighted Besov sequence spaces as defined above and $\ell^p$ spaces. 
\begin{lemma}
\label{lem:Isomorphism}
For each $p>1$ and $s,w\in\R$ the operator $\Upsilon_{s,w}^p\colon b^{p}_{s,w}\to\ell^p(\mathbb{W}^d)$ defined by
\begin{align}
\label{eq:Isomorphism}
	\big(\Upsilon_{s,w}^p\lambda\big)_m^{j,G}
	:= 2^{-\frac{jd}{2}}\omega_m^j\lambda^{j,G}_m
\end{align}
forms an isometric isomorphism, where the constants $\omega_m^j=\omega_m^j(p,s,w)$ are  defined in \eqref{eq:Weights}.
\end{lemma}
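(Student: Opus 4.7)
The plan is to verify the three defining properties of an isometric isomorphism directly, since the statement follows essentially from the definition of the norm on $b^p_{s,w}$.

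First, I would observe that $\Upsilon^p_{s,w}$ is well-defined and linear: it acts coordinatewise by multiplication by the scalar factor $2^{-jd/2}\omega^j_m$, which depends only on the index $(j,G,m)$ and not on the sequence $\lambda$. Linearity is then immediate.

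Second, I would check the isometry property by a direct comparison of norms. For any $\lambda \in b^p_{s,w}$,
\begin{align*}
\norm{\Upsilon^p_{s,w}\lambda}_{\ell^p(\mathbb{W}^d)}^p
= \sum_{j,G,m} \abs{2^{-\tfrac{jd}{2}}\omega^j_m \lambda^{j,G}_m}^p
= \norm{\lambda}_{b^p_{s,w}}^p,
\end{align*}
where the second equality is precisely the definition of $\norm{\cdot}_{b^p_{s,w}}$. In particular, this shows $\Upsilon^p_{s,w}$ is a bounded injection.

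Third, I would establish surjectivity. Since $\omega^j_m = 2^{j(s - d/p + d/2)}(1 + 2^{-2j}\abs{m}^2)^{w/2} > 0$ for every $(j,G,m) \in \mathbb{W}^d$, the coordinate factor $2^{-jd/2}\omega^j_m$ is strictly positive. Hence for any $\mu \in \ell^p(\mathbb{W}^d)$ the sequence $\lambda^{j,G}_m := (2^{-jd/2}\omega^j_m)^{-1}\mu^{j,G}_m$ satisfies $\Upsilon^p_{s,w}\lambda = \mu$, and the already-established isometry gives $\norm{\lambda}_{b^p_{s,w}} = \norm{\mu}_{\ell^p(\mathbb{W}^d)} < \infty$, so $\lambda \in b^p_{s,w}$. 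Combining the three steps yields the isometric isomorphism. There is no substantial obstacle: the lemma is a bookkeeping statement that records the fact that the weight $2^{-jd/2}\omega^j_m$ appearing in the norm of $b^p_{s,w}$ can be absorbed into the sequence to produce an honest $\ell^p$-norm.
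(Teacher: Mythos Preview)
Your proposal is correct and follows exactly the route the paper takes: the paper's proof simply notes that $\norm{\lambda}_{b^p_{s,w}}=\norm{\Upsilon_{s,w}^p\lambda}_{\ell^p(\mathbb{W}^d)}$ by definition and declares the result elementary. You have merely written out in full the straightforward verification (linearity, isometry, surjectivity via the positive coordinatewise inverse) that the paper leaves implicit.
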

\begin{proof}
Since 	$\norm{\lambda}_{b^p_{s,w}}=\norm{\Upsilon_{s,w}^p\lambda}_{\ell^p(\mathbb{W}^d)}$ by the very definition, the proof 
is elementary. 
\end{proof}

\begin{proof}[Proof of Proposition \ref{prop:BesovEmbeddings}]
	The continuous embedding in both cases is given by Proposition 3 in \cite{Fageot2016}.
	To prove non-inclusion we first note that for any $q>0$, if $y=(y_m^{j,G})_{(j,G,m)\in\mathbb{W}^d}$ is such that $y\notin\ell^q(\mathbb{W}^d)^\ast$ then there exists $x\in\ell^q(\mathbb{W}^d)$ with $\sum_{j,G,m} x_m^{j,G}y_m^{j,G}=\infty$.
	
	(1): suppose that $p_0> p_1$ and $w_1-w_0\ge -d\big(\tfrac{1}{p_1}-\tfrac{1}{p_0}\big)$. Taking $\alpha=\tfrac{p_0}{p_0-p_1}$ and thus $\alpha'=\tfrac{p_0}{p_1}$, we have $\alpha p_1(w_1-w_0)\ge -d$. Since for any $j\ge 0$ we have
	\begin{align*}
		\sum_{m\in\Z^d}\big(1+2^{-2j}\abs{m}^2\big)^{\frac{\alpha p_1(w_1-w_0)}{2}}=\infty, 
	\end{align*}
 according to the proof of \cite[Th.~3]{Fageot2016}, we conclude
	\begin{align*}
		\sum_{j,G,m}2^{j\alpha p_1(s_1-s_0-\frac{d}{p_1}+\frac{d}{p_0})}\big(1+2^{-2j}\abs{m}^2\big)^{\frac{\alpha p_1(w_1-w_0)}{2}}=\infty.
	\end{align*}
   Since the last expression means for the weights defined in\eqref{eq:Weights} that 
	\begin{align*}
		\left(\left( \frac{\omega_m^j(p_1,s_1,w_1)}{\omega_m^j(p_0,s_0,w_0)} \right)^{p_1}\right)_{(j,G,m)\in\mathbb{W}^d}\notin\ell^\alpha(\mathbb{W}^d),
	\end{align*}
  it follows that there exists a non-negative $y\in\ell^{\frac{p_0}{p_1}}(\mathbb{W}^d)$ satisfying
	\begin{align*}
		\sum_{j,G,m} \left( \frac{\omega_m^j(p_1,s_1,w_1)}{\omega_m^j(p_0,s_0,w_0)} \right)^{p_1}y_m^{j,G}=\infty.
	\end{align*}
	The isometry between $\ell^{p_0}(\mathbb{W}^d)$ and $b^{p_0}_{s_0,w_0}$ established in Lemma \ref{lem:Isomorphism} guarantees that 
	$
		\lambda_m^{j,G} = 2^{\frac{jd}{2}}\big(\omega_m^j(p_0,s_0,w_0)\big)^{-1}(y_m^{j,G})^{\frac{1}{p_1}}
	$
    defines a sequence $\lambda:=(\lambda_m^{j,G})$ in $b^{p_0}_{s_0,w_0}$. Since 
	\begin{align*}
		\norm{\lambda}_{b^{p_1}_{s_1,w_1}}^{p_1}
		=\sum_{j,G,m} \big(2^{-\frac{jd}{2}}\omega_m^j(p_1,s_1,w_1)\lambda_m^{j,G}\big)^{p_1}
		= \sum_{j,G,m} \left( \frac{\omega_m^j(p_1,s_1,w_1)}{\omega_m^j(p_0,s_0,w_0)} \right)^{p_1}y_m^{j,G}=\infty,
	\end{align*}
    it follows $\lambda\notin b^{p_1}_{s_1,w_1}$, which shows that $b^{p_0}_{s_0,w_0}\nsubseteq b^{p_1}_{s_1,w_1}$ which in turn shows $\Besov{p_0}{s_0}{w_0}\nsubseteq \Besov{p_1}{s_1}{w_1}$ by the isometry between the weighted Besov spaces and the weighted Besov sequence spaces; see \cite[Th.~6.15]{Triebel2006}.
	
	Now suppose $p_0> p_1$  and $s_0\le s_1$, and define for $(j,G,m)\in\mathbb{W}^d$
	\begin{align*}
		x_m^{j,G}:=\left( \frac{\omega_m^j(p_1,s_1,w_1)}{\omega_m^j(p_0,s_0,w_0)} \right)^{p_1}.
	\end{align*} 
	We can assume 
		$
		S_j:=\sum_{m\in\Z^d}\big(1+2^{-2j}\abs{m}^2\big)^{\frac{\alpha p_1(w_1-w_0)}{2}}<\infty
	$ for each $j\in\Zp$, as otherwise there would be nothing to prove. Since $S_j$ is asymptotically $\O(2^{jd})$ as $j\to\infty$ according to \cite[Th.~3]{Fageot2016}, we obtain
	\begin{align*}
		\sum_{j,G,m}\abs{x_m^{j,G}}^\alpha
		= \sum_{j,G}2^{\alpha jp_1(s_1-s_0-\frac{d}{p_1}+\frac{d}{p_0})}S_j=\infty,
	\end{align*}
	as $\alpha p_1\big(s_1-s_0-\tfrac{d}{p_1}+\tfrac{d}{p_0}\big)\ge -d$. It follows $\big(x_m^{j,G}\big)_{(j,G,m)\in\mathbb{W}^d}\notin \ell^\alpha(\mathbb{W}^d)$, and the proof of non-inclusion proceeds as above.
	
	(2): let $p_0\le p_1$ and $s_1-s_0>d(\tfrac{1}{p_1}-\tfrac{1}{p_0})$,  so we have $2^{jp_1(s_1-s_0-\frac{d}{p_1}+\frac{d}{p_0})}$ is unbounded as $j\to\infty$. For $(j,G,m)\in\mathbb{W}^d$ we define
	\begin{align*}
		x_m^{j,G}:=\left( \frac{\omega_m^j(p_1,s_1,w_1)}{\omega_m^j(p_0,s_0,w_0)} \right)^{p_1}.
	\end{align*}
	Since  $\big(x_m^{j,G}\big)_{(j,G,m)\in\mathbb{W}^d}\notin \ell^\infty(\mathbb{W}^d)$ there exists $y\in\ell^{\frac{p_0}{p_1}}(\mathbb{W}^d)$ such that
	\begin{align*}
		\sum_{j,G,m} \left( \frac{\omega_m^j(p_1,s_1,w_1)}{\omega_m^j(p_0,s_0,w_0)} \right)^{p_1}y_m^{j,G}=\infty,
	\end{align*}
	where we recall the dual of $\ell^p$ for $p\le 1$ is $\ell^\infty$.
	The proof of the non-inclusion follows as above.
	
	In the remaining case $p_0> p_1$  and $w_0<w_1$, we again obtain $\big(x_m^{j,G}\big)_{(j,G,m)\in\mathbb{W}^d}\notin \ell^\infty(\mathbb{W}^d)$, and the non-inclusion result follows.
\end{proof}

We may now identify the set of weighted Besov spaces containing $L^2(\R^d)$.
\begin{proposition}
	\label{prop:LqEmbeddings}
	Let $p>1$ and define
	\begin{align}
		\label{eq:EmbeddingRegion}
		E_{p}:=\begin{cases}
			(-\infty,0) \times (-\infty,-\tfrac{d}{p}+\tfrac{d}{2}), & \text{if }p\in (1,2),\\
			(-\infty,0] \times (-\infty,0], & \text{if }p=2,\\
			(-\infty,-\tfrac{d}{2}+\tfrac{d}{p}) \times (-\infty,0], & \text{if }p\in (2,\infty).
		\end{cases}
	\end{align}
	Then $L^2(\R^d) \subseteq \Besov{p}{s}{w}$ if and only if $(s,w)\in E_p$; in this case, 
    the embedding is continuous.
\end{proposition}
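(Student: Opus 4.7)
The plan is to deduce this proposition directly from Proposition \ref{prop:BesovEmbeddings} by using the identification $L^2(\R^d) = \Besov{2}{0}{0}$ recorded immediately after \eqref{eq:NormB}. Under this identification, the containment $L^2(\R^d)\subseteq\Besov{p}{s}{w}$ is exactly the embedding question for $\Besov{2}{0}{0}\hookrightarrow\Besov{p}{s}{w}$, which Proposition \ref{prop:BesovEmbeddings} characterises completely; moreover, continuity of the embedding is automatic from that proposition, so no separate argument is needed.

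Concretely, I would split into three cases according to whether $p<2$, $p=2$, or $p>2$, fixing $p_0=2$, $s_0=w_0=0$ and $(p_1,s_1,w_1)=(p,s,w)$ throughout. For $p\in(1,2)$ we have $p_0>p_1$, so part (1) of Proposition \ref{prop:BesovEmbeddings} applies and produces the two strict conditions $s<0$ and $-w>d(\tfrac{1}{p}-\tfrac{1}{2})$, matching the first branch of \eqref{eq:EmbeddingRegion}. For $p=2$, part (2) collapses to the non-strict conditions $s\le 0$ and $w\le 0$, matching the middle branch. For $p>2$ we have $p_0\le p_1$, so part (2) again applies and yields $-s\ge d(\tfrac{1}{2}-\tfrac{1}{p})$ together with $w\le 0$, matching the third branch.

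The proof is then essentially bookkeeping: the analytic content has already been settled in Proposition \ref{prop:BesovEmbeddings}, so there is no real obstacle. The only point one must track carefully is the open-versus-half-open endpoint convention in each factor of $E_p$, which is dictated by whether the corresponding inequality supplied by Proposition \ref{prop:BesovEmbeddings} is strict or non-strict; this is the step where an index or sign error would most easily creep in, and it provides the natural sanity check that the three pieces of the case definition of $E_p$ fit together at the joints $p=2$ and elsewhere.
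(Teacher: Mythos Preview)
Your proposal is correct and is essentially identical to the paper's own proof: the paper simply recalls $L^2(\R^d)=\Besov{2}{0}{0}$ and then applies Proposition~\ref{prop:BesovEmbeddings} with $p_0=2$, $p_1=p$, $s_0=0$, $s_1=s$, $w_0=0$, $w_1=w$. Your case-by-case verification of the endpoint conventions is a welcome expansion of the paper's terse one-line reference.
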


\begin{proof}
	We recall that $L^2(\R^d)=\Besov{2}{0}{0}$. The result then follows by applying Proposition \ref{prop:BesovEmbeddings} for the case $p_0=2$, $p_1=p$, $s_0=0$, $s_1=s$, $w_0=0$ and $w_1=w$.
\end{proof}

\section{L\'evy measures and L\'evy processes in Besov spaces}
\label{sec:LMandLPinBesov}

Let $U$ be a separable Banach space with dual $U^\ast$.
We define L\'evy processes in $U$ in the usual manner, that is, a $U$-valued process $L=(L(t)\colon t\ge 0)$ such that $L(0)=0$, $L$ has independent and stationary increments, and the map from $t$ to the law of $L(t)$ is continuous at 0 in the topology of weak convergence of probability measures. 

 For an arbitrary finite measure $\mu$ on $\Borel(U)$ define the exponential measure $e(\mu)$ by
\begin{align}
\label{eq:ExpMeasure}
	e(\mu):=e^{-\mu(U)} \sum_{m=0}^\infty \frac{1}{m!}\mu^{\ast m}.
\end{align}
The  exponential measure $e(\mu)$ is a compound Poisson distribution  with characteristic function 
\begin{align*}
\phi_\mu\colon U^\ast\to\C, \qquad 
	\phi_\mu(u^\ast)=\exp\left(\int_U\left(e^{i\scapro{u}{u^\ast}_U}-1\right)\,\mu(\d u)\right).
\end{align*}
Whereas in the finite dimensional case, and in Hilbert spaces, the integrability of $\abs{\cdot}^2\wedge 1$ characterises  L\'evy measures, there are no equivalent conditions known in arbitrary Banach spaces for when the same function $\phi_\mu$ but for a $\sigma$-finite measure $\mu$ still forms the characteristic function of a probability measure. Thus, L\'evy measures are defined implicitly in the following way, see \cite{Linde1986}: 
\begin{definition}
\label{def:LevyMeasure}
	A $\sigma$-finite measure $\mu$ on $\Borel(U)$ is a \emph{L\'evy measure} if $\mu(\{0\})=0$ and 
	\begin{align*}
		\phi_\mu\colon U^\ast\to\C, \qquad 
		\phi_\mu(u^\ast)=\exp\left(\int_U\left(e^{i\scapro{u}{u^\ast}_U}-1-i\scapro{u}{u^\ast}_U\1_{B_U}(u)\right)\,\mu(\d u)\right)
	\end{align*}
is the characteristic function of a probability measure on $\Borel(U)$, which we shall call $e_S(\mu)$.
\end{definition}
Theorem 5.4.8 in  \cite{Linde1986} guarantees that a $\sigma$-finite measure $\mu$ on $\Borel(U)$ is a L\'evy measure if and only if its symmetrisation $\mu+\mu^-$ is a L\'evy measure.  

L\'evy measures in sequence spaces $\ell^p$ are characterised in Yurinskii \cite{Yurinskii1974}. Using the wavelet definition of weighted Besov spaces enables us to derive a corresponding result for the L\'evy measures on these spaces. 
\begin{theorem}
\label{thm:LevyMeasureBesov}
	A  $\sigma$-finite measure $\mu$ on $\Borel(\Besov{p}{s}{w})$ with $\mu(\{0\})=0$ is a L\'evy measure on $\Besov{p}{s}{w}$ for some $p\in (1,\infty)$ and $s$, $w\in\R$ if and only if 
	\begin{enumerate}
	\item[{\rm (1)}] for $p\ge 2$, 
	\begin{align}
	\label{eq:BesovMeas3}
	&\int_{\besov{p}{s}{w}} \left(\norm{f}_{\besov{p}{s}{w}}^p \wedge 1\right)\, \mu(\d f)<\infty,\\
	\label{eq:BesovMeas2}
	&\sum_{j,G,m} (\omega_m^j)^p \left(\int_{\norm{f}_{\besov{p}{s}{w}}\le 1} \Scapro{\Psi_m^{j,G}}{f}^2\,\mu({\rm d}f)\right)^{p/2}<\infty;
	\end{align}
	\item[{\rm (2)}] and for $p\in (1,2)$,
	\begin{align}
	\label{eq:BesovMeas1}
	&\int_{\besov{p}{s}{w}} \left(\norm{f}_{\besov{p}{s}{w}}^2 \wedge 1\right)\, \mu(\d f)<\infty,\\
	\label{eq:BesovMeas5}
	& \sum_{j,G,m} (\omega_m^j)^p \int_0^\infty \left(1-e^{\int_{\norm{f}_{\besov{p}{s}{w}}\le 1}\big(\cos\tau\Scapro{\Psi_m^{j,G}}{f}-1\big)\,\mu({\rm d}f)}\right)\,\frac{{\rm d}\tau}{\tau^{1+p}}<\infty.
	\end{align}
	\end{enumerate}
	In the expressions above, $\omega_m^j=\omega_m^j(p,s,w)$ are the weight constants defined in \eqref{eq:Weights}, 
	and $\{\Psi^{j,G}_m\colon j\in\Zp, G\in G^j, m\in\Z^d\}$ is an admissible basis of $\Besov{p}{s}{w}$.
\end{theorem}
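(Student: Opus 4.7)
The plan is to transport the problem to the sequence space $\ell^p(\mathbb{W}^d)$, apply the known characterisation of L\'evy measures on $\ell^p$ due to Yurinskii \cite{Yurinskii1974}, and read the conditions back through the wavelet isomorphism. The norm formula \eqref{eq:NormB} together with Lemma \ref{lem:Isomorphism} shows that the map
\begin{align*}
T\colon \Besov{p}{s}{w}\to \ell^p(\mathbb{W}^d),\qquad (Tf)_m^{j,G}:=\omega_m^j\,\Scapro{\Psi_m^{j,G}}{f},
\end{align*}
is an isometric isomorphism of Banach spaces. In particular $T(B_{\besov{p}{s}{w}})=B_{\ell^p(\mathbb{W}^d)}$, so the unit balls used in the compensation term of Definition \ref{def:LevyMeasure} are in perfect correspondence.

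Since $T$ is a topological linear isomorphism, its adjoint $T^\ast$ is a bijection between the duals; a change of variables in the defining integral of $\phi_\mu$ then shows $\phi_\mu(T^\ast x^\ast)=\phi_{T_\ast\mu}(x^\ast)$ for every $x^\ast\in\ell^{p'}(\mathbb{W}^d)$, where $T_\ast\mu=\mu\circ T^{-1}$ is the pushforward. Consequently $\phi_\mu$ is the characteristic function of a probability measure on $\Besov{p}{s}{w}$ if and only if $\phi_{T_\ast\mu}$ is the characteristic function of a probability measure on $\ell^p(\mathbb{W}^d)$, and $T_\ast\mu(\{0\})=\mu(\{0\})=0$. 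Hence $\mu$ is a L\'evy measure on $\Besov{p}{s}{w}$ if and only if $T_\ast\mu$ is a L\'evy measure on $\ell^p(\mathbb{W}^d)$.

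By Yurinskii's theorem, a $\sigma$-finite measure $\nu$ on $\ell^p(\mathbb{W}^d)$ with $\nu(\{0\})=0$ is a L\'evy measure if and only if, for $p\ge 2$,
\begin{align*}
\int_{\ell^p}\big(\norm{x}_{\ell^p}^p\wedge 1\big)\,\nu(\d x)<\infty
\quad\text{and}\quad
\sum_{j,G,m}\Bigl(\int_{\norm{x}_{\ell^p}\le 1}\abs{x_m^{j,G}}^2\,\nu(\d x)\Bigr)^{p/2}<\infty,
\end{align*}
and, for $1<p<2$, the analogous pair with $\norm{x}^2\wedge 1$ in place of $\norm{x}^p\wedge 1$ and with the sum over $(j,G,m)$ of $\int_0^\infty\bigl(1-\exp\int_{\norm{x}_{\ell^p}\le 1}(\cos\tau x_m^{j,G}-1)\,\nu(\d x)\bigr)\,\tau^{-1-p}\,\d\tau$ in place of the second sum. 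Applying this to $\nu=T_\ast\mu$ and substituting $(Tf)_m^{j,G}=\omega_m^j\Scapro{\Psi_m^{j,G}}{f}$ and $\norm{Tf}_{\ell^p}=\norm{f}_{\besov{p}{s}{w}}$ yields \eqref{eq:BesovMeas3} and \eqref{eq:BesovMeas1} at once, and also \eqref{eq:BesovMeas2} after pulling $(\omega_m^j)^2$ out of the inner integral and raising to the power $p/2$.

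The one bookkeeping point is the $1<p<2$ sum: after substitution the inner $\cos$ carries an extra factor $\omega_m^j$ in its argument, and the change of variables $\tau\mapsto \tau/\omega_m^j$ in the outer integral converts $\d\tau/\tau^{1+p}$ into $(\omega_m^j)^p\,\d\tau/\tau^{1+p}$, producing exactly \eqref{eq:BesovMeas5}. The main point requiring care is the invariance of the L\'evy-measure property under the isomorphism $T$; this is where using the fact that $T$ is an \emph{isometry} (so that $T(B_U)=B_V$ and no truncation mismatch arises in the compensator) keeps the argument clean. Everything else is the translation of Yurinskii's theorem through the wavelet coordinates.
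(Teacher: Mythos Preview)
Your transport-to-$\ell^p$ strategy is sound: the map $T$ is indeed an isometric isomorphism (this is exactly Lemma~\ref{lem:Isomorphism} composed with the wavelet isometry $f\mapsto\lambda$), and because $T$ carries the unit ball to the unit ball the compensator in Definition~\ref{def:LevyMeasure} transfers cleanly, so $\mu$ is L\'evy on $\Besov{p}{s}{w}$ iff $T_\ast\mu$ is L\'evy on $\ell^p(\mathbb{W}^d)$. The bookkeeping for the rescaling $\tau\mapsto\tau/\omega_m^j$ in the $p<2$ condition is also correct.

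The paper, however, does \emph{not} simply quote an off-the-shelf $\ell^p$ characterisation from \cite{Yurinskii1974} and translate. What it actually invokes from Yurinskii is his Theorem~1.B, a general \emph{sufficient} criterion on a Banach space in terms of the uniform bound $\sup_{\epsilon\in(0,1)}\int\norm{f}^p\,e(\mu_{\epsilon,1})(\d f)<\infty$ together with a weak second-moment estimate. The paper then does real work to connect conditions \eqref{eq:BesovMeas3}--\eqref{eq:BesovMeas5} to those hypotheses: for $p\ge2$ it computes the $p$-th moment of the compound Poisson $e_S(\mu_{\epsilon,1})$ coordinatewise via Dirksen's two-sided bound \cite{Dirksen2014}, and for $p\in(1,2)$ it uses the moment formula \eqref{eq:MomentFormula} through the characteristic function. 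Necessity is handled separately via de~Acosta's polynomial-moment result \cite{Acosta1980} and, for \eqref{eq:BesovMeas1}, the cotype-2 theorem of Araujo--Gin\'e \cite{Araujo1978}.

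So your route is genuinely different and more economical, but it leans on the claim that \cite{Yurinskii1974} already states the $\ell^p$ characterisation in precisely the form you wrote---including the $\cos$-integral condition for $1<p<2$. The paper's decision to go through Theorem~1.B plus Dirksen/de~Acosta/Araujo--Gin\'e, rather than cite a ready-made $\ell^p$ statement, is a signal that the conditions in Yurinskii may not be packaged exactly this way. Before you rely on your shortcut you should locate the precise theorem in \cite{Yurinskii1974} you are invoking and check that its hypotheses and conclusions match line for line; if they do, your argument is complete and considerably shorter, while the paper's version has the advantage of being more self-contained and exposing the mechanism (the compound-Poisson moment computation) explicitly.
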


\begin{proof}
	We begin by showing sufficiency. Because of \cite[Th.~5.4.8]{Linde1986}, we can assume that $\mu$ is symmetric.
    Given $0\le\alpha\le\beta\le\infty$ we define
    \begin{align*}
    	\mu_{\alpha,\beta}(A):=\mu\big(A\cap\{f\in \Besov{p}{s}{w}\colon \alpha\le\norm{f}_{\besov{p}{s}{w}}<\beta\}\big),
    	\qquad A\in\Borel(\Besov{p}{s}{w}).
    \end{align*}	
	 	Let $\epsilon\in[0,1)$  and note that Conditions \eqref{eq:BesovMeas3} and \eqref{eq:BesovMeas1} each imply that $\mu_{\epsilon,1}(\Besov{p}{s}{w})$ is finite. Thus, the exponential measure $e(\mu_{\epsilon,1})$ defined in \eqref{eq:ExpMeasure} coincides with $e_S(\mu_{\epsilon,1})$ and we obtain 
	\begin{align}\label{eq.aux3}
		\int_{\besov{p}{s}{w}}\norm{f}_{\besov{p}{s}{w}}^p\,e_S(\mu_{\epsilon,1})({\rm d}f)
		&= \sum_{j,G,m}(\omega_m^j)^p\int_{\besov{p}{s}{w}}\abs{\Scapro{\Psi_m^{j,G}}{f}}^p\,e_S(\mu_{\epsilon,1})({\rm d}f)\notag\\
		&= \sum_{j,G,m}(\omega_m^j)^p\int_{\R}\abs{\beta}^p\,\big(e_S(\mu_{\epsilon,1})\circ\Scapro{\Psi_m^{j,G}}{\cdot}^{-1}\big)({\rm d}\beta)\notag\\
		&= \sum_{j,G,m}(\omega_m^j)^p E\abs{\xi_m^{j,G}}^p,
	\end{align}
	where $\xi_m^{j,G}$ is a random variable with distribution $e_S(\mu_{\epsilon,1})\circ\Scapro{\Psi_m^{j,G}}{\cdot}^{-1}=e_S(\mu_{\epsilon,1}\circ\Scapro{\Psi_m^{j,G}}{\cdot}^{-1})$
	for each $j\in\Zp, G\in G^j$ and $m\in\Z^d$.
	
	For $p\ge 2$, Theorem 1.1 in \cite{Dirksen2014} guarantees 
	\begin{align*}
		E\abs{\xi_m^{j,G}}^p \eqsim_p 
		\int_{\R}\abs{\beta}^p\,(\mu_{\epsilon,1}\circ\Scapro{\Psi_m^{j,G}}{\cdot}^{-1})({\rm d}\beta)
			+ \left(\int_{\R}\abs{\beta}^2\,(\mu_{\epsilon,1}\circ\Scapro{\Psi_m^{j,G}}{\cdot}^{-1})({\rm d}\beta)\right)^{p/2}.
	\end{align*}
   It follows that, for $p\ge 2$,
	\begin{align}
	\label{eq:pBounds}
		&\int_{\besov{p}{s}{w}}\norm{f}_{\besov{p}{s}{w}}^p\,e_S(\mu_{\epsilon,1})({\rm d}f)\notag\\
		&\eqsim_p \sum_{j,G,m}(\omega_m^j)^p \left(
			\int_{\R}\abs{\beta}^p\,(\mu_{\epsilon,1}\circ\Scapro{\Psi_m^{j,G}}{\cdot}^{-1})({\rm d}\beta)
			+ \left(\int_{\R}\abs{\beta}^2\,(\mu_{\epsilon,1}\circ\Scapro{\Psi_m^{j,G}}{\cdot}^{-1})({\rm d}\beta)\right)^{p/2}
			\right)\notag\\
		&= \sum_{j,G,m}(\omega_m^j)^p \left(
			\int_{\besov{p}{s}{w}}\abs{\Scapro{\Psi_m^{j,G}}{f}}^p\,\mu_{\epsilon,1}({\rm d}f)
			+ \left(\int_{\besov{p}{s}{w}}\abs{\Scapro{\Psi_m^{j,G}}{f}}^2\,\mu_{\epsilon,1}({\rm d}f)\right)^{p/2}
			\right)\notag\\
		&= \int_{\besov{p}{s}{w}}\norm{f}_{\besov{p}{s}{w}}^p\,\mu_{\epsilon,1}({\rm d}f) + \sum_{j,G,m}(\omega_m^j)^p\left(\int_{\besov{p}{s}{w}}\abs{\Scapro{\Psi_m^{j,G}}{f}}^2\,\mu_{\epsilon,1}({\rm d}f)\right)^{p/2}.
	\end{align}
	Conditions \eqref{eq:BesovMeas3} and \eqref{eq:BesovMeas2} imply
	\begin{align}
	\label{eq:YurinsCond1}
		\sup_{\epsilon\in(0,1)}\int_{\besov{p}{s}{w}}\norm{f}_{\besov{p}{s}{w}}^p\,e(\mu_{\epsilon,1})({\rm d}f)<\infty.
	\end{align}
   By applying H\"older's inequality twice,  Theorem \ref{thm:BesovDuality} shows for $g\in \big(\Besov{p}{s}{w}\big)^\ast$ that 
	\begin{align*}
			&\int_{\norm{f}_{\besov{p}{s}{w}}\le 1}\scapro{f}{g}_{\besov{p}{s}{w}}^2 \,\mu({\rm d}f)\notag\\
		&= \sum_{j,G,m} \sum_{k,H,n} \Scapro{\Psi_m^{j,G}}{g}\Scapro{\Psi_n^{k,H}}{g} \int_{\norm{f}_{\besov{p}{s}{w}}\le 1} \Scapro{\Psi_m^{j,G}}{f}\Scapro{\Psi_n^{k,H}}{f}\,\mu({\rm d}f)\notag\\
		&\le \left(\sum_{j,G,m} \Scapro{\Psi_m^{j,G}}{g} \left(\int_{\norm{f}_{\besov{p}{s}{w}}\le 1} \Scapro{\Psi_m^{j,G}}{f}^2\,\mu({\rm d}f)\right)^{1/2}\right)^2\notag\\
		&= \left(\sum_{j,G,m} (\omega_m^j)^{-1}\Scapro{\Psi_m^{j,G}}{g} \left((\omega_m^j)^2\int_{\norm{f}_{\besov{p}{s}{w}}\le 1} \Scapro{\Psi_m^{j,G}}{f}^2\,\mu({\rm d}f)\right)^{1/2}\right)^2\notag\\
		&\le \left(\sum_{j,G,m} (\omega_m^j)^{-p'}\abs{\Scapro{\Psi_m^{j,G}}{g}}^{p'}\right)^{2/p'} \left(\sum_{j,G,m} (\omega_m^j)^p\left(\int_{\norm{f}_{\besov{p}{s}{w}}\le 1} \Scapro{\Psi_m^{j,G}}{f}^2\,\mu({\rm d}f)\right)^{p/2}\right)^{2/p}.
	\end{align*}
	Since $\sum_{j,G,m} (\omega_m^j)^{-p'}\abs{\Scapro{\Psi_m^{j,G}}{g}}^{p'}=\norm{g}_{\besov{p'}{-s}{-w}}^{p'}$, Condition  \eqref{eq:BesovMeas2} guarantees 
	\begin{align*}
		\int_{\norm{f}_{\besov{p}{s}{w}}\le 1}\scapro{f}{g}_{\besov{p}{s}{w}}^2 \,\mu({\rm d}f)
		\lesssim\norm{g}_{\besov{p'}{-s}{-w}}^2.
	\end{align*}
	Together with \eqref{eq:YurinsCond1} we see that the conditions of Theorem 1.B in \cite{Yurinskii1974} are satisfied and hence $\mu$ is a L\'evy measure on $\Besov{p}{s}{w}$.
	
	Next we consider the case $p\in(1,2)$; in this case, we use the following relation
	for a real-valued symmetric random variable $X$ with characteristic function $\phi_X$ and $q\in(0,2)$:
	\begin{align}
		\label{eq:MomentFormula}
		E\abs{X}^q
		= c_q\int_0^\infty \frac{1-{\rm Re}(\phi_X(\tau))}{\tau^{q+1}}\,{\rm d}\tau,
	\end{align}
	where $c_q$ is a constant depending only on $q$; see for example \cite[Th.~11.4.3]{Kawata1972}.
 
   Applying Equality \eqref{eq:MomentFormula} to a random variable with distribution $e(\mu_{\epsilon,1})\circ\Scapro{\Psi_m^{j,G}}{\cdot}^{-1}$ shows, for each $\epsilon\in(0,1)$,
	\begin{align*}
		\int_{\besov{p}{s}{w}}&\norm{f}_{\besov{p}{s}{w}}^p\, e(\mu_{\epsilon,1})({\rm d}f)\\
		&=c_p \sum_{j,G,m}(\omega_m^j)^p
			\int_0^\infty \left(1-e^{\int_{\besov{p}{s}{w}}\big(\cos\tau \Scapro{\Psi_m^{j,G}}{f}-1\big)\,\mu_{\epsilon,1}({\rm d}f)}\right)\,\frac{{\rm d}\tau}{\tau^{1+p}}
			\\
		&\le c_p \sum_{j,G,m}(\omega_m^j)^p
			\int_0^\infty \left(1-e^{\int_{\norm{f}_{\besov{p}{s}{w}\le 1}}\big(\cos\tau \Scapro{\Psi_m^{j,G}}{f}-1\big)\,\mu({\rm d}f)}\right)\,\frac{{\rm d}\tau}{\tau^{1+p}}.
	\end{align*}
	 Condition \eqref{eq:BesovMeas5} guarantees
	\begin{align}\label{eq.aux1}
		\sup_{\epsilon\in(0,1)}\int_{\besov{p}{s}{w}}\norm{f}_{\besov{p}{s}{w}}^p\,e(\mu_{\epsilon,1})({\rm d}f)<\infty.
	\end{align}
    Condition \eqref{eq:BesovMeas1} implies for $g\in \Besov{p'}{-s}{-w}$ that
	\begin{align*}
		\int_{\norm{f}_{\besov{p}{s}{w}}\le 1}\scapro{f}{g}_{\besov{p}{s}{w}}^2\,\mu({\rm d}f)
		&\le \int_{\norm{f}_{\besov{p}{s}{w}}\le 1}\norm{f}_{\besov{p}{s}{w}}^2\norm{g}_{\besov{p'}{-s}{-w}}^2\,\mu({\rm d}f)\lesssim\norm{g}_{\besov{p'}{-s}{-w}}^2. 
	\end{align*}
 Together with \eqref{eq.aux1}, it follows that the conditions of Theorem 1.B in \cite{Yurinskii1974} are satisfied and hence $\mu$ is a L\'evy measure on $\Besov{p}{s}{w}$.
	
	To establish necessity of the conditions suppose that $\mu$ is a L\'evy measure on $\Besov{p}{s}{w}$; as before we can assume $\mu$ is symmetric. Proposition 5.4.1 in \cite{Linde1986} implies that we have $\mu\big(B_{\besov{p}{s}{w}}^c\big)<\infty$.
	Since $\mu_{0,1}$ is a symmetric L\'evy measure on $\Besov{p}{s}{w}$, the probability measure $e_S(\mu_{0,1})$ exists according to \cite[Cor.~5.4.4]{Linde1986}. Corollary~3.3 in \cite{Acosta1980}  shows, for all $q>0$, that 
	\begin{align}\label{eq.Acosta-poly}
		\int_{\besov{p}{s}{w}}\norm{f}_{\besov{p}{s}{w}}^q\,e_S(\mu_{0,1})({\rm d}f)<\infty.
	\end{align}
	We first consider the case $p\ge 2$. Since $e_S(\mu_{0,1})$ exists we obtain as in  \eqref{eq:pBounds} that
	\begin{align*}
		&\int_{\besov{p}{s}{w}}\norm{f}_{\besov{p}{s}{w}}^p\,\mu_{0,1}({\rm d}f)+\!\sum_{j,G,m}(\omega_m^j)^p \bigg(\int_{\besov{p}{s}{w}} \Scapro{\Psi_m^{j,G}}{f}^2\,\mu_{0,1}({\rm d}f)\bigg)^{p/2}
		\!\!\eqsim_p \int_{\besov{p}{s}{w}}\norm{f}_{\besov{p}{s}{w}}^p\,e_S(\mu_{0,1})({\rm d}f),
	\end{align*}
	which verifies the necessity of Conditions \eqref{eq:BesovMeas3} and \eqref{eq:BesovMeas2} because of \eqref{eq.Acosta-poly}.
	
	For the case $p\in(1,2)$, we apply Equality \eqref{eq:MomentFormula} to a random variable with distribution $e_S(\mu_{0,1})\circ\Scapro{\Psi_m^{j,G}}{\cdot}^{-1}$ to obtain
	\begin{align*}
		\sum_{j,G,m}(\omega_m^j)^p \int_0^\infty \left(1-e^{\int_{\norm{f}_{\besov{p}{s}{w}}\le 1}\big(\cos\tau \Scapro{\Psi_m^{j,G}}{f}-1\big)\,\mu({\rm d}f)}\right)\,\frac{{\rm d}\tau}{\tau^{1+p}}
		=c_p^{-1}\int_{\besov{p}{s}{w}}\norm{f}_{\besov{p}{s}{w}}^p\,e_S(\mu_{0,1})({\rm d}f),
	\end{align*}
	which shows the necessity of Condition \eqref{eq:BesovMeas5} because of \eqref{eq.Acosta-poly}.
	Furthermore,  since $\Besov{p}{s}{w}$ is isomorphic to $\ell^p(\mathbb{W}^d)$ and the latter is of cotype $2$ for $p\in (1,2)$, Theorem 2.2 in \cite{Araujo1978} directly shows the necessity of condition \eqref{eq:BesovMeas1}.
\end{proof}

\begin{remark}
\label{rem:TypeSuff}
 Since $\Besov{p}{s}{w}$ is of type $p$  for $p\in[1,2]$ by the isometry with $\ell^p(\mathbb{W}^d)$,  
 Theorem~2.3 in \cite{Araujo1978} guarantees that $\mu$ is L\'evy measure on $\Besov{p}{s}{w}$ if
	\begin{align*}
		\int_{\besov{p}{s}{w}}\Big(\norm{f}^p_{\besov{p}{s}{w}}\wedge 1\Big)\,\mu(\d f)<\infty.
	\end{align*}
\end{remark}

\section{Regularisation in Besov spaces}
\label{sec:Regularisation}
Cylindrical L\'evy processes in a Banach space $U$ (as defined for example in \cite{Applebaum2010,Riedle2011,Riedle2015}) 
naturally generalise the notation of cylindrical Brownian motion, based on the theory of cylindrical measures and cylindrical random variables.  
For some $n\in\mathbb{N}$ and $u_1,\ldots,u_n \in U^\ast$ we define the projection $\pi_{u_1^\ast,\ldots,u_n^\ast}:U\to\mathbb{R}^n$ by
\begin{align*}
	\pi_{u_1^\ast,\ldots, u_n^\ast}(u)
	:= \big(\scapro{u}{u_1^\ast}_U,\ldots, \scapro{u}{u_n^\ast}_U\big).
\end{align*}
Henceforth, we shall assume $U$ is reflexive and separable, and let $\Gamma\subseteq U^\ast$.
Sets of the form
\begin{align*}
	Z(u_1^\ast,\ldots, u_n^\ast;A)
	:= \pi^{-1}_{u_1^\ast,\ldots, u_n^\ast}(A) 
	= \left\{  u\in U: \big(\scapro{u}{u_1^\ast}_U,\ldots, \scapro{u}{u_n^\ast}_U\big) \in A \right\}
\end{align*}
for $A\in\Borel(\mathbb{R}^n)$ are called \emph{cylinder sets} with respect to $(U,\Gamma)$, and the set of all cylinder sets with respect to $(U,\Gamma)$ is denoted by $\mathcal{Z}(U,\Gamma)$; we also denote $\mathcal{Z}(U,U^*)=:\mathcal{Z}(U)$. It follows that  $\mathcal{Z}(U,\Gamma)$ is an algebra.  A cylindrical random variable $Z$ in $U$ is a linear and continuous mapping $ Z\colon U^{\ast} \rightarrow L^0(\Omega,P)$. The characteristic function of $Z$ is defined by $\phi_Z (u^\ast)=E[\exp(iZu^\ast)]$ for all $u^\ast\in U^\ast$; see e.g.\ \cite{Vakhania1987}. 

\begin{definition}
A family $(L(t):\, t\ge 0)$ of cylindrical random variables $L(t)\colon U^\ast\to L^0(\Omega,P)$ is called a \emph{cylindrical L\'evy process}  if for all $u^\ast_1, ... , u^\ast_n \in U^\ast$ and $n\in \mathbb{N}$, the stochastic process $((L(t)u^\ast_1, ... , L(t)u^\ast_n):\, t \ge 0)$ is a L\'evy process in $\mathbb{R}^n$. 
\end{definition}
In order to present the L\'evy-Khintchine formula for a cylindrical L\'evy process, we must first give a definition of the cylindrical version of the L\'evy measure. 
For cylindrical L\'evy measures, we must specifically exclude sets containing the origin to avoid consistency issues with finite-dimensional projections. We define the $\pi$-system $\mathcal{Z}_\ast(U,\Gamma)\subseteq\mathcal{Z}(U,\Gamma)$ for some $\Gamma\subseteq U^\ast$ as
\begin{align*}
	\mathcal{Z}_\ast(U,\Gamma):=\{Z(u^\ast_1,\ldots,u^\ast_n;A): u_1^\ast, \dots, u_n^\ast\in \Gamma, n\in\N, A\in\Borel(\R^n), 0\notin A\}.
\end{align*} 
If $\Gamma=U^\ast$ we write $\mathcal{Z}_\ast(U):=\mathcal{Z}_\ast(U,U^\ast)$. 
A set function $\mu\colon\mathcal{Z}_\ast(U)\to[0,\infty]$ is called a \emph{cylindrical L\'evy measure} if for all $u^\ast_1, \ldots , u^\ast_n \in U^\ast$ and $n\in \mathbb{N}$ the map
	\begin{align*}
		\mu_{u^\ast_1, \ldots , u^\ast_n}\colon\Borel(\R^n)\to[0,\infty],
		\qquad
		\mu_{u^\ast_1, \ldots , u^\ast_n}(A)=\mu\circ\pi_{u^\ast_1, \ldots , u^\ast_n}^{-1}(A\setminus\{0\})
	\end{align*}
	defines a L\'evy measure on $\R^n$.
The characteristic function of a cylindrical L\'evy process $(L(t):\, t\ge 0)$ is given by
\[\phi_{L(t)} \colon U^\ast \rightarrow \mathbb{C}, \qquad \phi_{L(t)}(u^\ast)=\exp\big(t\theta_L(u^\ast)\big),\]
for all $t\ge 0$. Here,  $\theta_L \colon U^\ast \rightarrow \mathbb{C}$ is called the (cylindrical) symbol of $L$, and is of the form
\begin{align*}
	\theta_L(u^\ast) = ia(u^\ast) - \tfrac{1}{2}\scapro{u^\ast}{Qu^\ast}_{U^\ast} +\int_U\left(e^{i\scapro{u}{u^\ast}_U}-1-i\scapro{u}{u^\ast}_U \1_{B_{\mathbb{R}}}\big(\scapro{u}{u^\ast}_U\big)\right)\mu ({\rm d} u),
\end{align*}
where $a \colon U^\ast \rightarrow \mathbb{R}$ is a continuous mapping with $a(0)=0$,
the mapping $Q \colon U^\ast \rightarrow U^{\ast\ast}$ is a positive, symmetric operator  and $\mu$ is a cylindrical L\'evy measure; see \cite{Riedle2011}.

We shall analyse cylindrical L\'evy processes in $L^2(\R^d)$ to determine when they arise from a L\'evy process in a larger Besov space. This is based on the following concept:
\begin{definition}
\label{def:Induced}
	Let $H$ be a Hilbert space continuously and densely embedded in a Banach space $U$.
	A cylindrical L\'evy process $L$ in $H$ is said to be induced by a L\'evy process $Y$ in $U$ if 
	\begin{align*}
		L(t)u^\ast=\scapro{Y(t)}{u^\ast}_U \quad\text{$P$-a.s.\ for all $u^\ast\in U^\ast$ and $t\ge 0$.} 
	\end{align*} 
\end{definition}
We shall examine conditions on the cylindrical L\'evy measure of  $L$ such that $L$ is induced by a genuine L\'evy process $Y$ in $\Besov{p}{s}{w}$ for some fixed $p>1$ and $s, w\in\R$. 
With reference to Definition \ref{def:Induced}, we shall only consider Besov spaces which contain $L^2(\R^d)$. This will allow us to develop the mathematical theory without the complication arising in the case that the cylindrical L\'evy process $L$ may have a non-trivial kernel and thus may be induced by a process in a Besov space which does not contain the whole of $L^2(\R^d)$.

\subsection{Cylindrical Brownian motions}
The literature in dealing with the Besov localisation of Gaussian processes is primarily concerned with the path properties of finite-dimensional Brownian motions, or analogously with Gaussian white noise, conceived as a distribution-valued (generalised) random variable which represents the weak derivative in space and time of a finite-dimensional Brownian motion. In this work, we generalise from a distribution-valued random variable to a distribution-valued process.

Let $W$ be a standard cylindrical Brownian motion in $L^2(\R^d)$. Applying results from our previous work  \cite{Griffiths2019}, we can identify $W$ with  a distribution-valued white noise as considered in \cite{Aziznejad2018}.  
Proposition 3.4 in \cite{Aziznejad2018} implies  that, given $p>1$ and $s,w\in\R$, the standard cylindrical Brownian motion $W$ is induced by a Brownian motion in $\Besov{p}{s}{w}$ if and only if                                    
$s<-\frac{d}{2}$ and $w<-\frac{d}{p}$.

For a general cylindrical Brownian motion $W$ in $L^2(\R^d)$  with covariance operator $Q$  and reproducing kernel Hilbert space $H_Q$ we obtain the following: suppose $p>1$ and $w,s\in\R$ are such that $L^2(\R^d)$ is continuously embeeded into $\Besov{p}{s}{w}$, then $W$ is induced by a Brownian motion in $\Besov{p}{s}{w}$ if and only if the injection $H_Q\hookrightarrow\Besov{p}{s}{w}$ is $\gamma$-Radonifying in the sense of Chapter 9 in \cite{Hytonen2017}.

%
\subsection{Non-Gaussian cylindrical L\'evy processes}
\label{sec:nonGaussCLP}
The cylindrical L\'evy process $L=(L(t):\, t\ge 0)$ on $L^2(\R^d)$ defines by $L^\prime(t)(b^\ast):=L(t)(\iota^\ast b^\ast)$ for each $b^\ast\in\big(\Besov{p}{s}{w}\big)^\ast$  a cylindrical L\'evy process $L^\prime=(L^\prime(t):\, t\ge 0)$ on $\Besov{p}{s}{w}$ where $\iota\colon L^2(\R^d)\to\Besov{p}{s}{w}$ is the canonical embedding for some $p>1$ and $(s,w)\in E_p$.  
 
Letting $\mu$ be the  cylindrical L\'evy measure of  $L$ on $\mathcal{Z}_\ast(L^2(\R^d))$, then $\tilde{\mu}:=\mu\circ \iota^{-1}$ is the cylindrical L\'evy measure of $L^\prime$ on $\mathcal{Z}_\ast(\Besov{p}{s}{w})$;  see \cite[Th.~3.4]{Riedle2011}. 
We shall study the case when $\tilde{\mu}$ extends to a $\sigma$-finite measure on $\Borel(\Besov{p}{s}{w})$. In this case, we will mildly abuse notation and simply refer to this $\sigma$-finite and $\sigma$-additive extension as $\mu$ where the context allows no confusion. The starting point for our analysis shall be to examine this extension using the results previously developed. 

The following result demonstrates that, in the non-Gaussian case, regularisation of the cylindrical L\'evy process results from the extension of the cylindrical L\'evy measure, and furthermore allows us to concentrate in the sequel on symmetric cylindrical L\'evy processes.
\begin{theorem}
\label{thm:Symmetric}
	Let $L$ be a cylindrical L\'evy process in $L^2(\R^d)$ with no Guassian component and fix $p>1$ and $(s,w)\in E_{p}$. Let $L^S:=L-L_c$ be the symmetrisation of $L$, where $L_c$ is an independent copy of $L$. Then $L$ is induced by a L\'evy process in $\Besov{p}{s}{w}$ if and only if the cylindrical L\'evy measure of $L^S$ has an extension to a $\sigma$-finite measure on $\Borel(\Besov{p}{s}{w})$ which is a L\'evy measure on $\Besov{p}{s}{w}$.
\end{theorem}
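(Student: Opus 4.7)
The plan is to translate the regularising property of $L$ into an extension statement for its cylindrical L\'evy measure, and then pass between $\tilde\mu:=\mu\circ\iota^{-1}$ and the symmetric cylindrical L\'evy measure of $L^S$ using \cite[Th.~5.4.8]{Linde1986}. The forward implication is essentially immediate: if $L$ is induced by a L\'evy process $Y$ in $B:=\Besov{p}{s}{w}$ with triple $(a_Y,0,\nu)$ and $Y_c$ denotes an independent copy of $Y$, then $Y^S:=Y-Y_c$ is a L\'evy process on $B$ with symmetric L\'evy measure $\nu+\nu^-$, and a direct comparison of cylindrical symbols shows $L^S$ is induced by $Y^S$; hence the classical L\'evy measure $\nu+\nu^-$ on $\Borel(B)$ provides the required extension of the cylindrical L\'evy measure of $L^S$.

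For the converse, let $\nu^S$ denote the extension of the cylindrical L\'evy measure of $L^S$ to a (symmetric) L\'evy measure on $\Borel(B)$. Since $\tilde\mu(Z)\le\nu^S(Z)$ for every $Z\in\mathcal{Z}_\ast(B)$, the cylindrical measure $\tilde\mu$ is dominated by the $\sigma$-additive measure $\nu^S$ on sets bounded away from $0$, which forces countable additivity of $\tilde\mu$ on $\mathcal{Z}_\ast(B)$. A Carath\'eodory-type extension then produces a $\sigma$-additive Borel measure $\nu$ on $B$ extending $\tilde\mu$, and uniqueness of $\sigma$-finite extensions on the generating $\pi$-system (available because $B$ is separable) forces $\nu+\nu^-=\nu^S$ on $\Borel(B)$. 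Since $\nu^S$ is a L\'evy measure by assumption, \cite[Th.~5.4.8]{Linde1986} yields that $\nu$ is itself a L\'evy measure on $B$.

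With $\nu$ in hand, I would let $Y$ be the L\'evy process in $B$ with triple $(a_Y,0,\nu)$, choosing the drift $a_Y\in B$ so that the symbol identity $\theta_Y(b^\ast)=\theta_L(\iota^\ast b^\ast)$ holds for every $b^\ast\in B^\ast$; comparing the Banach-space L\'evy--Khintchine formula on $B$ with the cylindrical symbol of $L$, this forces
\begin{align*}
	\scapro{a_Y}{b^\ast}_B
	=a(\iota^\ast b^\ast)+\int_B\scapro{b}{b^\ast}_B\big(\1_{B_B}(b)-\1_{B_\R}(\scapro{b}{b^\ast}_B)\big)\,\nu(\d b),
\end{align*}
where the correction integral is well-defined because $\nu$ is a L\'evy measure on $B$. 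The resulting process $Y$ then induces $L$.

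The principal obstacle will be verifying that the right-hand side of this drift equation genuinely defines a continuous linear functional on $B^\ast$---and hence, by reflexivity of $B$, an element of $B$---even though the cylindrical drift $a$ of $L$ is only assumed to be a continuous (not \emph{a priori} linear) map on $L^2(\R^d)^\ast$. Linearity must be read off from the L\'evy--Khintchine structure on each finite-dimensional projection combined with the now-established $\sigma$-additivity of $\nu$; carrying out this identification, together with the Carath\'eodory extension step that transfers countable additivity from $\nu^S$ to $\tilde\mu$, will be the technical core of the argument.
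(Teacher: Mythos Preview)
Your forward implication and your strategy for extending $\tilde\mu$ to a genuine L\'evy measure on $B$ match the paper's. The paper, however, invokes \cite[Th.~3.4]{Riedle2015} directly for the extension step: that theorem packages precisely the domination-by-$\nu^S$ argument you sketch, so your Carath\'eodory route amounts to a reproof of that result. Note that the bare implication ``$\tilde\mu\le\nu^S$ forces countable additivity of $\tilde\mu$ on $\mathcal{Z}_\ast(B)$'' is not automatic for finitely additive set functions and requires a tightness/Radon argument on the dominating measure; this is exactly the content of \cite[Th.~3.4]{Riedle2015}, which also delivers the L\'evy-measure property of the extension in one stroke rather than via your separate appeal to \cite[Th.~5.4.8]{Linde1986}.

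The genuine divergence is in constructing the inducing process $Y$. You attempt to build $Y$ directly from its characteristics $(a_Y,0,\nu)$ and correctly isolate the obstacle: one must show that the corrected drift functional is realised by an element of $B$, despite the cylindrical drift $a$ being only continuous and not a priori linear. The paper sidesteps this entirely by appealing to the stochastic integration theory of \cite{Riedle2015}: integrating the constant operator-valued function $f\equiv\iota$ on $[0,t]$ against $L$ via \cite[Th.~5.6]{Riedle2015} produces a $B$-valued L\'evy process inducing $L$, and the hypotheses of that theorem are met precisely because $\nu$ has been shown to be a L\'evy measure on $B$ (reflexivity of $B$ is used to drop a weak*-continuity hypothesis on $a$). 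Your direct construction can in principle be completed, but the stochastic-integral shortcut is considerably more economical and avoids the drift analysis altogether.
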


\begin{proof}
	The `only if' implication is clear. To establish the converse, 
	denoting the cylindrical L\'evy measure of $L$ by $\mu$, we have that  $L^S$ has cylindrical L\'evy measure $\mu+\mu^-$. 
	Let $\tilde{\mu}_S$ denote the $\sigma$-additive extension of $\mu+\mu^-$ on $\Borel(\Besov{p}{s}{w})$; by assumption $\tilde{\mu}_S$ is a L\'evy measure.
	As we have $\mu\le\tilde{\mu}_S$ on each cylinder set in $\mathcal{Z}_\ast(\Besov{p}{s}{w})$, Theorem 3.4 in \cite{Riedle2015} implies that $\mu$ has a $\sigma$-additive extension on $\Borel(\Besov{p}{s}{w})$ which is a L\'evy measure.
	
	We fix $t>0$ and apply Theorem 5.6 in \cite{Riedle2015} to the function $f\colon[0,t]\to\mathcal{L}(L^2(\R^d),\Besov{p}{s}{w})$ given by $f(u)=\iota$ for all $u\in[0,t]$. We conclude that $f$ is stochastically integrable with respect to $L$ as defined in \cite{Riedle2015} since $\mu$ extends to a L\'evy measure. (This Theorem actually requires that the function $a\colon L^2(\R^d)\to\R$ as defined in Equation 3.1 of \cite{Riedle2015} is weak*-weakly sequentially continuous, however a careful analysis of the proof indicates that in a reflexive Banach space such as $\Besov{p}{s}{w}$, this requirement is not necessary.)
	It follows that the process
      $\big(\int_{[0,t]}f(u)\,\d L(u)\colon t\ge 0\big)$ forms a L\'evy process in $\Besov{p}{s}{w}$ which induces $L$.
\end{proof}
 
In the sequel, we shall make use of the following technique.
 As the sums defining membership of a given Besov space in \eqref{de.Besov} are required to be unconditionally convergent, we may take any convenient ordering of the terms.
For any enumeration of the countable set of indices $j, G$ and $m$, we denote a sum over the first $n$ terms in this enumeration by $\sum_{j,G,m}^n$.
We define for each $n\in\N$ the projection $P_n\in\mathcal{L}(\Besov{p}{s}{w})$ onto the subspace spanned by the first $n$ elements in the enumeration of $\Psi$, that is
\begin{align}
\label{eq:Projection}
	P_n f := \sum_{j,G,m}^n \Scapro{\Psi_m^{j,G}}{f}\Psi_m^{j,G},
	\qquad f\in\Besov{p}{s}{w}.
\end{align}

\begin{theorem}
\label{thm:RadonTest}
	Let $\mu$ be a cylindrical L\'evy measure on $\mathcal{Z}_\ast(\Besov{p}{s}{w})$ for some $p>1$ and $(s,w)\in E_p$. Then $\mu$ extends to a $\sigma$-finite measure on $\Borel(\Besov{p}{s}{w})$ if
	\begin{align*}
		\lim_{R\to\infty}\lim_{n\to\infty} \mu\left(\left\{f:\norm{P_n f}_{\besov{p}{s}{w}}>R\right\}\right)=0.
	\end{align*}
\end{theorem}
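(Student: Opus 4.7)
The strategy is to construct the Borel extension $\tilde\mu$ as a weak limit of the finite-dimensional push-forward measures $\mu_n := \mu \circ P_n^{-1}$, exploiting that the reflexivity of $\Besov{p}{s}{w}$ gives compactness in the weak topology.

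First I would verify the basic structural observations. The wavelet representation of the Besov norm, $\|P_n f\|_{\besov{p}{s}{w}}^p = \sum_{j,G,m}^n (\omega_m^j)^p |\Scapro{\Psi_m^{j,G}}{f}|^p$, is monotone non-decreasing in $n$ with pointwise limit $\|f\|_{\besov{p}{s}{w}}^p$. Consequently the cylinder sets $A_n^R := \{f : \|P_n f\|_{\besov{p}{s}{w}} > R\}$ are increasing in $n$ with union equal to the Borel set $F_R := \{f : \|f\|_{\besov{p}{s}{w}} > R\}$. Each push-forward $\mu_n$ is a L\'evy measure on $V_n := \operatorname{span}\{\Psi_m^{j,G} : (j,G,m) \text{ among the first } n\}$, hence a $\sigma$-finite Borel measure on $\Besov{p}{s}{w}$ supported on $V_n$, and the $\mu_n$ satisfy $\mu_n(\{v : \|v\|_{\besov{p}{s}{w}} > R\}) = \mu(A_n^R)$ and the projective consistency $\mu_n = \mu_m \circ (P_n|_{V_m})^{-1}$ for $m \ge n$.

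By hypothesis we may fix $R_0 > 0$ with $C_0 := \sup_n \mu(A_n^{R_0}) < \infty$. The restricted finite measures $\nu_n := \mu_n \cdot \1_{\{v \in V_n \,:\, \|v\|_{\besov{p}{s}{w}} > R_0\}}$, viewed as Borel measures on $\Besov{p}{s}{w}$, then have total mass uniformly bounded by $C_0$, and the hypothesis gives uniform concentration on bounded sets: for each $\epsilon > 0$ there exists $R_1 > R_0$ with $\nu_n(\{\|v\|_{\besov{p}{s}{w}} > R_1\}) = \mu(A_n^{R_1}) < \epsilon$ for all $n$. Since $\Besov{p}{s}{w}$ is reflexive and separable via the isomorphism with $\ell^p(\mathbb{W}^d)$ from Lemma \ref{lem:Isomorphism}, the closed ball of radius $R_1$ is metrizable and weakly compact, hence a compact Polish space in the weak topology. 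Prokhorov's theorem in the weak topology therefore yields a subsequence $(\nu_{n_k})$ converging weakly to a finite Radon measure $\nu_{R_0}$, which is also a Borel measure on $\Besov{p}{s}{w}$ for the norm topology, since weak and norm Borel $\sigma$-algebras coincide on separable Banach spaces.

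The final step is to identify $\nu_{R_0}$ with an extension of $\mu$ on cylinder sets inside $F_{R_0}$. For any cylinder set $A = \pi^{-1}_{u_1^\ast, \ldots, u_k^\ast}(C)$ with $C \in \Borel(\R^k)$, $0 \notin C$, and $A \subseteq F_{R_0}$, the identity
\begin{align*}
	\mu_n(A \cap V_n) = \mu\big(\{f : (\Scapro{f}{P_n^\ast u_1^\ast}, \ldots, \Scapro{f}{P_n^\ast u_k^\ast}) \in C\}\big)
\end{align*}
combined with the dual norm convergence $P_n^\ast u_j^\ast \to u_j^\ast$ in $\Besov{p'}{-s}{-w}$ (which holds because the wavelet basis is shrinking in the reflexive space) allows matching the weak limit with $\mu(A)$, which also shows that the limit is independent of the extracted subsequence. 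Repeating the construction for a decreasing sequence of radii $R_k$ satisfying $\sup_n \mu(A_n^{R_k}) < \infty$ and gluing the resulting measures together along the annuli $F_{R_k} \setminus F_{R_{k-1}}$ yields the desired $\sigma$-finite Borel extension of $\mu$. The main obstacle lies in this identification step: while tightness delivers a weakly convergent subsequence, verifying that the limit agrees with $\mu$ on every cylinder set bounded away from $0$ requires combining the weak convergence of $(\nu_{n_k})$ with the dual-norm convergence of $P_n^\ast u_j^\ast$, and it is here that the structural compatibility between the wavelet decomposition of $\Besov{p}{s}{w}$ and the cylindrical data of $\mu$ has to be used carefully.
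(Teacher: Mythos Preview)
Your weak-compactness approach is genuinely different from the paper's and is an interesting idea, but the gluing step contains a real gap. You assume the existence of a decreasing sequence of radii $R_k\to 0$ with $\sup_n \mu(A_n^{R_k})<\infty$, yet the hypothesis only controls $\lim_{R\to\infty}\sup_n\mu(A_n^R)$ and gives no information for small $R$. Concretely, take the cylindrical L\'evy measure on $\ell^2$ (equivalently on $\Besov{2}{s}{w}$ via the wavelet isomorphism) induced by $\tilde\mu=\sum_k\delta_{e_k}$: every finite-dimensional projection is a L\'evy measure, the hypothesis holds because $\mu(A_n^R)=0$ for $R\ge 1$, yet $\mu(A_n^R)=n\to\infty$ for every $R<1$. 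The $\sigma$-finite Borel extension $\tilde\mu$ exists, but your Prokhorov argument can only recover the (trivial) restriction to $\{\|f\|>1\}$ and never sees any of the mass. This shows that slicing by the full norm cannot work in general; the paper instead slices by the coordinate functionals $|\Scapro{f}{e_k^\ast}|$, which are always finite on cylinder sets because each one-dimensional projection of a cylindrical L\'evy measure is a genuine L\'evy measure on $\R$.

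A second, more reparable issue is the identification step you flag as the ``main obstacle''. To match the weak limit $\nu_{R_0}$ with $\mu$ on cylinder sets you need two separate convergences along the \emph{same} subsequence: weak convergence $\nu_{n_k}(A)\to\nu_{R_0}(A)$ for continuity sets $A$, and $\mu(\pi^{-1}_{P_n^\ast u_1^\ast,\ldots,P_n^\ast u_k^\ast}(C))\to\mu(\pi^{-1}_{u_1^\ast,\ldots,u_k^\ast}(C))$. The latter is not automatic; it is precisely the continuity property of cylindrical L\'evy measures established via \cite[Lem.~4.4]{Riedle2011}, which the paper invokes explicitly (as the weak convergence of $(|\beta|^2\wedge 1)\,\mu\circ\pi^{-1}_{f^\ast_{1,k},\ldots,f^\ast_{m,k}}$) and which you would need to cite and combine carefully with your Prokhorov subsequence. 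The paper circumvents the whole weak-limit identification by feeding each finite restricted piece $\mu_{k,n}$ directly into the Gihman--Skorohod extension theorem, which already packages the required continuity and tightness conditions.
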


\begin{proof}
	We shall apply the theorem on \cite[p.327]{Gihman1974}, which gives conditions for when a cylindrical probability measure extends to a  probability measure on a Banach space with a separable dual. In order to apply this theorem in our setting, a careful study of the proof indicates that in addition to our assumptions we need to work with finite cylindrical measures satisfying the continuity condition
	\begin{align}
	\label{eq:Continuity}
		\lim_{k\to\infty} \mu\big(\{f\colon\Scapro{f}{f^\ast_{1,k}}<a_1,\ldots,\Scapro{f}{f^\ast_{m,k}}<a_m\}\big)
		=\mu\big(\{f\colon\Scapro{f}{f^\ast_{1}}<a_1,\ldots,\Scapro{f}{f^\ast_{m}}<a_m\}\big)
	\end{align}
	for Lebesgue-almost all $a_1,\ldots,a_m\in\R$ whenever $f^\ast_{i,k}\to f^\ast_i$ in $\big(\Besov{p}{s}{w}\big)^\ast$ for each $i=1,\ldots,m$. 

	Let $\{e_k\}_{k\in\N}$ be an (unconditional) Schauder basis of $\Besov{p}{s}{w}$ with coordinate functionals $\{e^\ast_k\}_{k\in\N}$ such that $\norm{e^\ast_k}_{\Besov{p'}{-s}{-w}}=1$ for each $k\in\N$.
	We consider the cylindrical measure $\mu_{1,1}$ defined by 
	\begin{align*}
		\mu_{1,1}(C)=\mu\left(C\cap\{f\colon\abs{\Scapro{f}{e^\ast_1}}>1\}\right)
   \qquad\text{for }	C\in \mathcal{Z}(\Besov{p}{s}{w},\{e^\ast_k\}_{k\in\N}).
	\end{align*}
	By the properties of cylindrical L\'evy measures, the set function $\mu_{1,1}$ is a finite cylindrical measure on $\mathcal{Z}(\Besov{p}{s}{w},\{e^\ast_k\}_{k\in\N})$. As $\mu$ satisfies $\lim_{k\to\infty}(\abs{\beta}^2\wedge 1)(\mu\circ\pi_{e^\ast_1,f^\ast_{1,k}\ldots,f^\ast_{m,k}}^{-1})(\d\beta)=(\abs{\beta}^2\wedge 1)(\mu\circ\pi_{e^\ast_1,f^\ast_1,\ldots,f^\ast_m}^{-1})(\d\beta)$ weakly for every $f^\ast_{i,k}\to f^\ast_i$ in $\big(\Besov{p}{s}{w}\big)^\ast$ for each $i=1,\ldots,m$. 
	due to Lemma 4.4 in \cite{Riedle2011}, we see that \eqref{eq:Continuity} is satisfied for $\mu_{1,1}$ as each finite-dimensional projection only takes weight on $\abs{\beta}>1$.
	Since 
		\begin{align*}
   \mu_{1,1}\left(\left\{f:\norm{P_n f}_{\besov{p}{s}{w}}>R\right\}\right)\le     \mu\left(\left\{f:\norm{P_n f}_{\besov{p}{s}{w}}>R\right\}\right)
   \qquad\text{for all }n\in\N, R>0, 
	\end{align*}
	we may apply the result  in \cite[p.327]{Gihman1974} and extend $\mu_{1,1}$ to a finite measure $\tilde{\mu}_{1,1}$ on $\Borel(\Besov{p}{s}{w})$. The measure $\tilde{\mu}_{1,1}$ is supported on $\{f\colon\abs{\Scapro{f}{e^\ast_1}}>1\}$, and satisfies $\tilde{\mu}_{1,1}(C)=\mu(C\cap\{f\colon\abs{\Scapro{f}{e^\ast_1}}>1\})$ for each cylinder set $C\in\mathcal{Z}(\Besov{p}{s}{w},\{e^\ast_k\}_{k\in\N})$.
	
	Next, for each $n\in\N$ we construct cylindrical measures $\mu_{1,n+1}$ by 
	\begin{align*}
		\mu_{1,n+1}(C)=\mu\left(C\cap\left\{f\colon\frac{1}{n+1}<\abs{\Scapro{f}{e^\ast_1}}\le\frac{1}{n}\right\}\right),
		\qquad C\in\mathcal{Z}(\Besov{p}{s}{w},\{e^\ast_k\}_{k\in\N}).
	\end{align*}
	Applying the same argument as above (using an easy rescaling), we obtain a sequence of finite measures $\{\tilde{\mu}_{1,n}\}_{n\in\N}$ with pairwise disjoint support. We define the measure $\tilde{\mu}_1$ by
	\begin{align*}
		\tilde{\mu}_1(A)=\sum_{n=1}^\infty\tilde{\mu}_{1,n}(A),
		\qquad A\in\Borel(\Besov{p}{s}{w}).
	\end{align*}
	By this construction, $\tilde{\mu}_1$ forms a $\sigma$-finite  measure on $\Borel(\Besov{p}{s}{w})$ with support in  $B_1:=\{f\colon\abs{\Scapro{f}{e^\ast_1}}\neq 0\}$. 
	
	We repeat the procedure on the subspace $\{f\colon\abs{\Scapro{f}{e^\ast_1}}=0\}$. We start by defining, for $C\in\mathcal{Z}(\Besov{p}{s}{w},\{e^\ast_k\}_{k\in\N})$,
	\begin{align*}
		\mu_{2,1}(C)=\mu\left(C\cap\{f\colon\abs{\Scapro{f}{e^\ast_1}}=0,\abs{\Scapro{f}{e^\ast_2}}>1\}\right),
	\end{align*}
	and, for $n\in\N$,
	\begin{align*}
		\mu_{2,n+1}(C)=\mu\left(C\cap\left\{f\colon\abs{\Scapro{f}{e^\ast_1}}=0,\frac{1}{n+1}<\abs{\Scapro{f}{e^\ast_2}}\le\frac{1}{n}\right\}\right).
	\end{align*}
	We in this manner obtain a $\sigma$-finite measure $\tilde{\mu}_2$ on $\Borel(\Besov{p}{s}{w})$  with support in the set $B_2:=\{f\colon\abs{\Scapro{f}{e^\ast_1}}=0,\abs{\Scapro{f}{e^\ast_2}}\neq 0\}$.
	We then continue this procedure to create the set of measures $\{\tilde{\mu}_k\}_{k\in\N}$, where for each $k\in\N$, $\tilde{\mu}_k$ has support in $B_k:=\{f\colon\abs{\Scapro{f}{e^\ast_1}}=0,\ldots,\abs{\Scapro{f}{e^\ast_{k-1}}}=0,\abs{\Scapro{f}{e^\ast_k}}\neq 0\}$. 
	We observe that the sets $\{B_k\}_{k\in\N}$ are pairwise disjoint, and
	$
		\Besov{p}{s}{w}=\{0\}\cup\bigcup_{k=1}^\infty B_k.
	$ 
	We now finally define the measure $\tilde{\mu}$ on $\Borel(\Besov{p}{s}{w})$ by setting $\tilde{\mu}(\{0\})=0$ and
	\begin{align*}
		\tilde{\mu}(A)=\sum_{k=1}^\infty\tilde{\mu}_{k}(A),
		\qquad A\in\Borel(\Besov{p}{s}{w}).
	\end{align*}
	As $\tilde{\mu}$ is a sum of $\sigma$-finite measures with pairwise disjoint support, it follows that $\tilde{\mu}$ is $\sigma$-finite.
		Let $C=C(e_1^\ast, \dots, e_n^\ast;B)$ for some $B\in\Borel(\R^n)$ with $0\notin B$ be an arbitrary set in $\mathcal{Z}_\ast(\Besov{p}{s}{w},\{e^\ast_k\}_{k\in\N})$. Since
	\begin{align*}
		\tilde{\mu}(C)=\sum_{k=1}^{n}\tilde{\mu}_{k}(C)=\sum_{k=1}^{n}\mu(C\cap B_k)=\mu(C),
	\end{align*}
   we conclude that $\tilde{\mu}$ forms an extension of the restriction of $\mu$ to $\mathcal{Z}_\ast(\Besov{p}{s}{w},\{e^\ast_k\}_{k\in\N})$. 
 
 First assuming that $\mu$ is finite,  continuity of its characteristic functions shows that $\mu$ is uniquely determined by its values on  $\mathcal{Z}_\ast(\Besov{p}{s}{w},\{e^\ast_k\}_{k\in\N})$. It follows that $\tilde{\mu}$  forms an extension of $\mu$. The general case follows from the construction of $\tilde{\mu}$.  
\end{proof}

We now present a Corollary to Theorem \ref{thm:LevyMeasureBesov} which characterises when a cylindrical L\'evy process in $L^2(\R^d)$ is induced by a genuine L\'evy process in some given weighted Besov space.
\begin{corollary}\label{co.cylindrical-in-Besov}
Let $L$ be a cylindrical L\'evy process in $L^2(\R^d)$ with no Gaussian component and with cylindrical L\'evy measure $\mu$.  Then $L$ is induced by a L\'evy process in $\Besov{p}{s}{w}$  for some $p>1$ and $(s,w)\in E_{p}$ if and only if
$\mu$ extends to a $\sigma$-finite measure on $\Borel(\Besov{p}{s}{w})$ and 
\begin{enumerate}
\item[{\rm (1)}] for $p\ge 2$, 
\begin{align}\label{eq:CylInBes1}
&\lim_{n\to\infty} \int_{\besov{p}{s}{w}}\left(\norm{P_n f}_{\besov{p}{s}{w}}^p\wedge 1\right)\,\mu({\rm d}f)<\infty,\\
&\lim_{n\to\infty} \sum_{j,G,m}^n (\omega_m^j)^p \left(\int_{\besov{p}{s}{w}} \1_{B_{\R}}\Big(\norm{P_n f}_{\besov{p}{s}{w}}\Big)\abs{\Scapro{\Psi_m^{j,G}}{f}}^2\,\mu({\rm d}f)\right)^{p/2}<\infty;\notag
\end{align}
\item[{\rm (2)}] for $p\in (1,2)$, 
\begin{align}\label{eq:CylInBes2}
&\lim_{n\to\infty} \int_{\besov{p}{s}{w}}\left(\norm{P_n f}_{\besov{p}{s}{w}}^2\wedge 1\right)\,\mu({\rm d}f)<\infty,\\
&\lim_{n\to\infty} \sum_{j,G,m}^n (\omega_m^j)^p \int_0^\infty \left(1-e^{\int_{\besov{p}{s}{w}} \1_{B_{\R}}\big(\norm{P_n f}_{\besov{p}{s}{w}}\big)\big(\cos\tau \Scapro{\Psi_m^{j,G}}{f}-1\big)\,\mu({\rm d}f)}\right)\,\frac{{\rm d}\tau}{\tau^{1+p}}<\infty.\notag
\end{align}
\end{enumerate}
	In the expressions above, $\omega_m^j=\omega_m^j(p,s,w)$ are the weight constants defined in \eqref{eq:Weights}, 
and $\{\Psi^{j,G}_m\colon j\in\Zp, G\in G^j, m\in\Z^d\}$ is any admissible basis of $\Besov{p}{s}{w}$.
\end{corollary}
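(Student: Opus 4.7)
The plan is to derive the corollary by combining the symmetrisation reduction of Theorem \ref{thm:Symmetric} with the Lévy measure characterisation of Theorem \ref{thm:LevyMeasureBesov}, showing that the truncated cylindrical conditions \eqref{eq:CylInBes1}--\eqref{eq:CylInBes2} are precisely the translation into cylindrical language of Theorem \ref{thm:LevyMeasureBesov} applied to the $\sigma$-finite extension $\nu$ of $\mu$ on $\Borel(\besov{p}{s}{w})$. First I would note three symmetries: the existence of the extension is invariant under $\mu\mapsto\mu+\mu^-$; the integrands in \eqref{eq:CylInBes1}--\eqref{eq:CylInBes2} are symmetric (as $\norm{\cdot}_{\besov{p}{s}{w}}$, $\Scapro{\Psi_m^{j,G}}{\cdot}^2$ and $\cos$ are all even); and by Theorem \ref{thm:Symmetric}, $L$ is induced by a L\'evy process iff $\mu+\mu^-$ extends to a L\'evy measure. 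Thus it suffices to prove, for symmetric $\mu$ with a $\sigma$-finite extension $\nu$, that \eqref{eq:CylInBes1}--\eqref{eq:CylInBes2} are equivalent to $\nu$ being a L\'evy measure.

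For the first condition in each of \eqref{eq:CylInBes1} and \eqref{eq:CylInBes2}, the wavelet expansion yields
\begin{align*}
    \norm{P_n f}_{\besov{p}{s}{w}}^p = \sum_{j,G,m}^n (\omega_m^j)^p\abs{\Scapro{\Psi_m^{j,G}}{f}}^p\uparrow \norm{f}_{\besov{p}{s}{w}}^p
\end{align*}
pointwise as $n\to\infty$, so $\norm{P_n f}_{\besov{p}{s}{w}}^p\wedge 1 \uparrow \norm{f}_{\besov{p}{s}{w}}^p\wedge 1$ and likewise for $\norm{\cdot}^2\wedge 1$ when $p\in(1,2)$. Since these integrands depend only on finitely many wavelet coefficients, integration against $\mu$ agrees with integration against $\nu$, and monotone convergence immediately yields the equivalence with \eqref{eq:BesovMeas3} or \eqref{eq:BesovMeas1}, respectively.

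The second conditions are more delicate, since the indicator $\1_{B_\R}(\norm{P_n f}_{\besov{p}{s}{w}})$ decreases in $n$ while the summation range $\sum_{j,G,m}^n$ increases, so there is no direct monotone convergence. My approach is to reinterpret the $n$-th summand $T_n$ of \eqref{eq:CylInBes1} (or the analogue in \eqref{eq:CylInBes2}) as Theorem \ref{thm:LevyMeasureBesov}'s corresponding condition for the finite-dimensional pushforward $\nu_n:=\nu\circ P_n^{-1}$ on $P_n(\besov{p}{s}{w})$: because of the orthogonality relation $\Scapro{\Psi_m^{j,G}}{P_n f}=\Scapro{\Psi_m^{j,G}}{f}$ for $(j,G,m)\le n$ and $=0$ otherwise, a change of variables gives
\begin{align*}
    T_n=\sum_{j,G,m}^n (\omega_m^j)^p\Bigg(\int_{\norm{g}_{\besov{p}{s}{w}}\le 1}\Scapro{\Psi_m^{j,G}}{g}^2\,\nu_n(\d g)\Bigg)^{p/2}.
\end{align*}
The moment identity \eqref{eq:pBounds} (respectively the analogue via \eqref{eq:MomentFormula} for $p\in(1,2)$) applied to the finite-dimensional compound Poisson $e_S((\nu_n)_{0,1})$ then links $T_n$ to $\int\norm{g}_{\besov{p}{s}{w}}^p\, e_S((\nu_n)_{0,1})(\d g)$ minus the cylindrical first-moment term handled above. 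Uniform boundedness of these moments in $n$, using that $P_n$ is a contraction on $\besov{p}{s}{w}$ together with \cite[Cor.~3.3]{Acosta1980} applied to $\nu_{0,1}$, yields the equivalence between $T_n$ being bounded and condition \eqref{eq:BesovMeas2} (or \eqref{eq:BesovMeas5}) for $\nu$. A Yurinskii-type tightness argument in each $P_n(\besov{p}{s}{w})\hookrightarrow\besov{p}{s}{w}$ then transfers the Lévy measure property from the $\nu_n$ to $\nu$, and the converse direction follows by direct monotone truncation.

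The main obstacle will be the non-monotone dependence of $T_n$ on $n$: one cannot pass $n\to\infty$ inside the indicator $\1_{\norm{P_n f}\le 1}$ by direct monotone or dominated convergence, since the natural dominating function $(\omega_m^j)^{-2}$ on $\{\norm{P_n f}\le 1\}$ is not $\nu$-integrable outside the Besov-unit ball for general L\'evy $\nu$. Resolving this requires the finite-dimensional reinterpretation together with the contractivity of $P_n$, which together turn the two-parameter limit into a uniform moment bound on the finite-dimensional compound Poissons $e_S((\nu_n)_{0,1})$.
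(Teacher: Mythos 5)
Your proposal follows the same route as the paper: reduce to the symmetric case via Theorem \ref{thm:Symmetric} and then invoke the characterisation of L\'evy measures in Theorem \ref{thm:LevyMeasureBesov}; the paper's own proof is exactly these two steps and nothing more. The additional work you do --- monotone convergence for the first conditions, and the reinterpretation of the truncated second conditions via the pushforwards $\nu\circ P_n^{-1}$ with uniform compound-Poisson moment bounds to handle the non-monotone interplay between the growing sum and the shrinking indicator $\1_{B_\R}(\norm{P_nf}_{\besov{p}{s}{w}})$ --- is a correct and genuinely needed elaboration of a passage the paper leaves implicit.
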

\begin{proof}
	By Theorem \ref{thm:Symmetric} we may assume that $L$ and $\mu$ are symmetric.
	Then necessity and sufficiency of the conditions follows from Theorem \ref{thm:LevyMeasureBesov}.
\end{proof}

\subsection{Radonifying Embeddings}

Assume that $U$ and $V$ are some Banach spaces. 
Let $ Z\colon U^{\ast} \rightarrow L^0(\Omega,P)$ be a cylindrical random  variable and $T\colon U\to V$ a linear and continuous operator. The image of $Z$ under $T$ is the cylindrical random variable $TZ\colon V^\ast \rightarrow L^0(\Omega,P)$ on $V$ defined by $(TZ)v^\ast:=Z(T^\ast v^\ast)$ for all $v^\ast\in V^\ast$. The cylindrical random variable 
$TZ$ is induced by a random variable $X\colon \Omega\to V$ if $(TZ)v^\ast=\scapro{X}{v^\ast}_V$ $P$-a.s. for all $v^\ast\in V^\ast$. 
\begin{definition}
\label{def:Radonifying}
	Let $U,V$ be separable Banach spaces. A continuous linear operator $T\colon U\to V$ is called
	\begin{itemize}
	\item[{\rm (1)}] \textup{0-Radonifying} if for each cylindrical random variable $Z$ in $U$, the cylindrical random variable  $TZ$ is induced by a random variable in $V$;
	\item[{\rm (2)}] $p$-\textup{Radonifying} for some $p>0$ if for each cylindrical random variable $Z$ in $U$ satisfying $E\abs{Zu^\ast}^p<\infty$ for each $u^\ast\in U^\ast$, the cylindrical random variable $TZ$ is induced by a random variable in $V$.
	\end{itemize}
\end{definition}
The word $Radonifying$ originates from the fact, that the probability distribution of the inducing random variable in the definition above is a Radon measure on $\Borel(V)$. Since we only consider separable Banach spaces, this is redundant in our setting. 

\begin{definition}
	Let $U,V$ be Banach spaces and $p>0$. A continuous linear operator $T\colon U\to V$ is called $p$-\textup{summing} if there exists $C>0$ such that for every finite collection $(u_i)_{i=1}^n\subseteq U$ we have
	\begin{align*}
		\sum_{i=1}^n\norm{Tu_i}_V^p
		\le C^p \sup_{\norm{u^\ast}_{U^\ast}\le 1}\sum_{i=1}^n\abs{\scapro{u_i}{u^\ast}_U}^p. 
	\end{align*}
\end{definition}
Every $p$-Radonifying operator for $p>0$ is $p$-summing, and every 0-Radonifying operator is $p$-summing for every $p>0$. The classes of $p$-summing and $p$-Radonifying operators coincide for $p>1$;  see \cite[Th.\ VI.5.4]{Vakhania1987}. If $U$ and $V$ are Hilbert spaces, 0-Radonifying operators coincide with Hilbert-Schmidt operators \cite[Th.~VI.5.2]{Vakhania1987}, and thus the class of $p$-summing operators for all $p>0$ coincides with the class of Hilbert-Schmidt operators.

The cylindrical L\'evy process $L^\prime$, introduced in Section \ref{sec:nonGaussCLP}, is the image of $L$ under 
the embedding $\iota\colon L^2(\R^d)\to\Besov{p}{s}{w}$. If $\iota$ is $0$-radonifying, 
it follows that $L^\prime$ is automatically induced by a genuine L\'evy process on $\Besov{p}{s}{w}$, which motivates us to investigate this property in detail in the following
\begin{theorem}
\label{thm:pRadon}
The embedding $\iota\colon L^2(\R^d)\to\Besov{p}{s}{w}$ for some $p>1$ is $p$-Radonifying
if and only if 
	\begin{align}
	\label{eq:pRadonifying}
(s,w)\in  R^{(p)}_{p}:= (-\infty,-\tfrac{d}{2}) \times (-\infty,-\tfrac{d}{p}).
	\end{align}
\end{theorem}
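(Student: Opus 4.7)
The plan is to use the wavelet bases to reduce the problem to a diagonal operator between sequence spaces and then characterise the $p$-summing property, which by \cite[Th.~VI.5.4]{Vakhania1987} coincides with $p$-Radonifying for $p>1$. Since $\Psi$ is orthonormal in $L^2(\R^d)$, the map $\Phi\colon f\mapsto (\Scapro{\Psi_m^{j,G}}{f})_{(j,G,m)}$ is an isometric isomorphism from $L^2(\R^d)$ onto $\ell^2(\mathbb{W}^d)$. Combining this with $\Upsilon_{s,w}^p\colon\Besov{p}{s}{w}\to\ell^p(\mathbb{W}^d)$ from Lemma~\ref{lem:Isomorphism}, and using the identification $\lambda_m^{j,G}=2^{jd/2}\Scapro{\Psi_m^{j,G}}{f}$ for $f\in L^2(\R^d)$, the embedding $\iota$ is unitarily equivalent to the diagonal operator $D\colon\ell^2(\mathbb{W}^d)\to\ell^p(\mathbb{W}^d)$ with $D e_k=\omega_m^j e_k$ for $k=(j,G,m)$. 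The task reduces to showing that $D$ is $p$-Radonifying if and only if $(\omega_m^j)\in\ell^p(\mathbb{W}^d)$.

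For necessity, I would test against the canonical cylindrical Gaussian random variable $Z$ on $L^2(\R^d)$, for which $E\abs{Zu^\ast}^p=c_p\norm{u^\ast}_{L^2}^p<\infty$ for every $u^\ast$. If $\iota$ is $p$-Radonifying, then $\iota Z$ is induced by some $\Besov{p}{s}{w}$-valued random variable $X$. Applying this to $v^\ast:=(\Upsilon_{s,w}^p)^{\ast}e_k^\ast$ for each $k\in\mathbb{W}^d$ and computing $\iota^\ast v^\ast=\omega_m^j\Psi_m^{j,G}$ shows $(\Upsilon_{s,w}^p X)_k=\omega_m^j Z\Psi_m^{j,G}$. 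The resulting sequence consists of independent centred Gaussians with variances $(\omega_m^j)^2$ and must lie in $\ell^p(\mathbb{W}^d)$ almost surely, which by Kolmogorov's three-series theorem forces $\sum(\omega_m^j)^p<\infty$.

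For sufficiency, assume $\sum(\omega_m^j)^p<\infty$ and let $Z$ be any cylindrical random variable in $L^2(\R^d)$ with $E\abs{Zu^\ast}^p<\infty$ for all $u^\ast$. The closed graph theorem applied to $Z\colon L^2(\R^d)\to L^p(\Omega,P)$ produces $C>0$ with $(E\abs{Zu^\ast}^p)^{1/p}\le C\norm{u^\ast}_{L^2}$. Define $Y_k:=\omega_m^j Z\Psi_m^{j,G}$ for $k=(j,G,m)$; Fubini yields
\begin{align*}
E\sum_{k\in\mathbb{W}^d}\abs{Y_k}^p = \sum_{(j,G,m)}(\omega_m^j)^p E\abs{Z\Psi_m^{j,G}}^p \le C^p\sum_{(j,G,m)}(\omega_m^j)^p<\infty,
\end{align*}
so $Y\in\ell^p(\mathbb{W}^d)$ almost surely and $X:=(\Upsilon_{s,w}^p)^{-1}Y$ defines a $\Besov{p}{s}{w}$-valued random variable. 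To verify that $X$ induces $\iota Z$, take $v^\ast\in\big(\Besov{p}{s}{w}\big)^\ast$ and write $v^\ast=(\Upsilon_{s,w}^p)^\ast c$ with $c\in\ell^{p'}(\mathbb{W}^d)$; a H\"older estimate (exploiting $\omega\in\ell^p\subseteq\ell^{2p/(2-p)}$ when $p\in(1,2)$ or $\omega\in\ell^\infty$ when $p\ge 2$) gives $(c_k\omega_m^j)\in\ell^2(\mathbb{W}^d)$, so the series $\iota^\ast v^\ast=\sum c_k\omega_m^j\Psi_m^{j,G}$ converges in $L^2(\R^d)$, and the continuity of $Z$ delivers $\scapro{X}{v^\ast}=\sum c_k Y_k = Z(\iota^\ast v^\ast)=(\iota Z)v^\ast$ almost surely.

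The final step is the explicit evaluation of the summability condition. With $(\omega_m^j)^p = 2^{jp(s-d/p+d/2)}(1+2^{-2j}\abs{m}^2)^{wp/2}$, the inner sum over $m\in\Z^d$ is finite if and only if $wp<-d$, i.e.\ $w<-d/p$, and is then asymptotically of order $2^{jd}$ by \cite[Th.~3]{Fageot2016}. The outer sum becomes $\sum_j 2^{jp(s+d/2)}$, which converges if and only if $s<-d/2$. Combining these gives exactly the region $R^{(p)}_p$. I expect the main obstacle to be the sufficiency argument, and specifically the H\"older-based verification that $X$ induces $\iota Z$, which must handle both regimes $p\ge 2$ and $p\in(1,2)$ in a unified manner.
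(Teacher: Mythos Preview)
Your proof is correct and takes a genuinely different route from the paper's.

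\textbf{Sufficiency.} The paper verifies the $p$-summing inequality directly: for $f_1,\dots,f_n\in L^2(\R^d)$,
\[
\sum_{i=1}^n\norm{f_i}_{\besov{p}{s}{w}}^p
=\sum_{j,G,m}(\omega_m^j)^p\sum_{i=1}^n\abs{\scapro{f_i}{\Psi_m^{j,G}}_{L^2}}^p
\le\Big(\sum_{j,G,m}(\omega_m^j)^p\Big)\sup_{\norm{y}_{L^2}\le 1}\sum_{i=1}^n\abs{\scapro{f_i}{y}_{L^2}}^p,
\]
and then invokes the equivalence of $p$-summing and $p$-Radonifying from \cite[Th.~VI.5.4]{Vakhania1987}. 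Your approach instead constructs the inducing random variable $X$ directly via the closed-graph bound on $Z\colon L^2\to L^p(\Omega)$. The paper's argument is two lines; yours is longer but has the virtue of never leaving the probabilistic definition. Your H\"older step to show $(c_k\omega_m^j)\in\ell^2$ is correct but unnecessary: once $(s,w)\in E_p$ the adjoint $\iota^\ast$ is bounded into $L^2(\R^d)$, so the $L^2$-convergence of the wavelet series of $\iota^\ast v^\ast$ is automatic.

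\textbf{Necessity.} Here your approach is cleaner. The paper treats $p\ge 2$ by testing the $p$-summing inequality on the wavelet vectors $f_i=\Psi_{m_i}^{j_i,G_i}$, using $p\ge 2$ to bound $\sup_{\norm{y}\le 1}\sum_i\abs{\scapro{\Psi_i}{y}}^p\le 1$; for $p\in(1,2)$ this estimate fails, and the paper instead appeals forward to Theorem~\ref{thm:L2AlphaStable} (the canonical $\alpha$-stable counterexample). Your Gaussian test handles all $p>1$ uniformly and is self-contained: the wavelet coefficients of $X$ are $\omega_m^j g_k$ with $g_k$ i.i.d.\ standard normal, and $\sum(\omega_m^j)^p\abs{g_k}^p<\infty$ a.s.\ forces $\sum(\omega_m^j)^p<\infty$ by the three-series theorem (using that $\omega_m^j$ is bounded when $(s,w)\in E_p$). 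This removes the forward reference and the case split.

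One minor point: your opening sentence announces a characterisation of the $p$-summing property, but the body of the argument never uses the summing definition---it works entirely with the Radonifying one. This is harmless but worth tidying.
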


\begin{proof}
	The continuous embedding follows as $R^{(p)}_p\subseteq E_p$. Let $\Psi$ be an admissible basis of $\Besov{p}{s}{w}$ and $\omega_m^j=\omega_m^j(p,s,w)$ be the weight constants defined in \eqref{eq:Weights}. Note, that  
\begin{align}\label{eq.w-sum-R}
	\sum_{j,G,m} (\omega_m^j)^p
	= \sum_{j,G,m}2^{jp(s-d/p+d/2)}(1+2^{-2j}\abs{m}^2)^{pw/2}<\infty
\;\Leftrightarrow \; (s,w)\in R_p^{(p)}. 
\end{align}
This follows from the fact, that the sum over $m$ converges if and only if $w<-\tfrac{d}{p}$, and in this case is asymptotically $\O(2^{jd})$ as $j\to\infty$; see e.g.\ the proof of \cite[Th.~3]{Fageot2016}. 

For any $p>1$, we obtain for $f_1,\ldots, f_n\in L^2(\R^d)$ that  
	\begin{align*}
		\sum_{i=1}^n \norm{f_i}_{\besov{p}{s}{w}}^p
	= \sum_{j,G,m} (\omega_m^j)^p \sum_{i=1}^n\abs{\scapro{f_i}{\Psi_m^{j,G}}_{L^2}}^p
		\le \sum_{j,G,m} (\omega_m^j)^p\sup_{\norm{y}_{L^{2}}\le 1}\sum_{i=1}^n\abs{\scapro{f_i}{y}_{L^2}}^p.
	\end{align*}
Thus, the embedding $L^2(\R^d)\hookrightarrow \Besov{p}{s}{w}$ is $p$-summing if $(s,w)\in R_p^{(p)}$. 
	
To establish necessity, we first consider $p\ge 2$.  Choose $f_i=\Psi_{m_i}^{j_i,G_i}\in\Psi$, $i=1,\ldots,n$ to be distinct wavelet basis vectors, i.e.\ $f_i\neq f_j$ for $i\neq j$. It follows that 
	\begin{align*}
		\sup_{\norm{y}_{L^2}\le 1}\sum_{i=1}^n\abs{\scapro{f_i}{y}_{L^2}}^p
		\le \sup_{\norm{y}_{L^2}\le 1}\sum_{j,G,m}\abs{\scapro{\Psi_m^{j,G}}{y}_{L^2}}^p
		\le \sup_{\norm{y}_{L^2}\le 1}\sum_{j,G,m}\abs{\scapro{\Psi_m^{j,G}}{y}_{L^2}}^2
		=1.
	\end{align*}
On the other side, since $\sum_{i=1}^n\norm{f_i}_{\besov{p}{s}{w}}^p=\sum_{i=1}^n(\omega_i)^p$
where $\omega_i=\omega_m^j$ when $f_i=\Psi_m^{j,G}$ for some $(j,G,m)\in\mathbb{W}^d$, 
it follows from \eqref{eq.w-sum-R} that the embedding $\iota$ is not $p$-summable if 
$(s,w)\notin R_p^{(p)}$. An application of  \cite[Th.\ VI.5.4]{Vakhania1987} completes the proof of the case $p\ge 2$. 
	
	Necessity of $(s,w)\in R_p^{(p)}$ for $p\in (1,2)$ follows from  Theorem \ref{thm:L2AlphaStable}, which gives a 
	counterexample of a cylindrical L\'evy process in $L^2(\R^d)$ with $p$-th weak moments that is not induced by a process in $\Besov{p}{s}{w}$  for any $(s,w)\notin{R^{(p)}_{p}}$. This result, though later in the text, does not rely upon the result that the embedding $L^2(\R^d)\hookrightarrow \Besov{p}{s}{w}$ is not $p$-Radonifying for $(s,w)\notin R^{(p)}_{p}$. 
\end{proof}
Due to the range of continuous embeddings between the weighted Besov spaces, it is in many cases possible to factorise the embeddings via a Hilbert space which allows for a 0-Radonification result.

\begin{theorem}
\label{thm:Radonification}
The embedding $\iota\colon L^2(\R^d)\to\Besov{p}{s}{w}$ for some $p>1$ is $0$-Radonifying
if and only if 
	\begin{align}
	\label{eq:RadonRegion}
		(s,w)\in R_{p}:=\begin{cases}
		(-\infty,-\tfrac{d}{2}) \times (-\infty,-\tfrac{d}{p}), & \text{when }p\in (1,2],\\
		(-\infty,-d+\tfrac{d}{p}) \times (-\infty,-\tfrac{d}{2}), & \text{when }p\in (2,\infty).
		\end{cases}
	\end{align}
\end{theorem}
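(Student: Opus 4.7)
The plan is a two-sided argument: sufficiency via a Hilbert--Schmidt factorisation through an intermediate Hilbert Besov space $\Besov{2}{s'}{w'}$, and necessity by combining Theorem~\ref{thm:pRadon} with a cylindrical $\alpha$-stable counterexample in the regime $p>2$.

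For sufficiency, suppose $(s,w)\in R_p$. I would factor $\iota=\iota_2\circ\iota_1$, where $\iota_1\colon L^2(\R^d)\to\Besov{2}{s'}{w'}$ and $\iota_2\colon\Besov{2}{s'}{w'}\to\Besov{p}{s}{w}$ are the canonical embeddings for suitably chosen $s',w'\in\R$. Since the wavelets form an orthonormal basis of $L^2(\R^d)$ and $\|\Psi_m^{j,G}\|_{\besov{2}{s'}{w'}}=\omega_m^j(2,s',w')$, the Hilbert--Schmidt norm of $\iota_1$ equals $\bigl(\sum_{j,G,m}(\omega_m^j(2,s',w'))^2\bigr)^{1/2}$, which by the asymptotic estimate already used in the proof of Theorem~\ref{thm:pRadon} and in \cite[Th.~3]{Fageot2016} is finite if and only if $s'<-d/2$ and $w'<-d/2$. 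Continuity of $\iota_2$ is governed by Proposition~\ref{prop:BesovEmbeddings}: for $p\in(1,2]$ part~(1) demands $s'>s$ and $w'-w>d(1/p-1/2)$, while for $p>2$ part~(2) demands $s'-s\ge d(1/2-1/p)$ and $w'\ge w$. A direct check shows that $s',w'<-d/2$ compatible with these constraints exist precisely when $(s,w)\in R_p$, in each of the two cases. Since Hilbert--Schmidt operators between Hilbert spaces are $0$-Radonifying by \cite[Th.~VI.5.2]{Vakhania1987}, and the composition of a $0$-Radonifying operator with a bounded operator is again $0$-Radonifying, it follows that $\iota$ is $0$-Radonifying.

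For necessity, every $0$-Radonifying operator is also $p$-Radonifying, so Theorem~\ref{thm:pRadon} immediately forces $(s,w)\in R_p^{(p)}$. When $p\in(1,2]$ this coincides with $R_p$ and the argument concludes. For $p>2$, the strip $R_p^{(p)}\setminus R_p$, consisting of those $(s,w)$ with $s<-d/2$, $w<-d/p$, and either $s\ge -d+d/p$ or $w\ge -d/2$, must still be excluded; for such $(s,w)$ I would invoke the canonical symmetric $\alpha$-stable cylindrical L\'evy process on $L^2(\R^d)$ for any fixed $\alpha\in(0,2)$, in the spirit of the forward reference made at the end of the proof of Theorem~\ref{thm:pRadon}. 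The image of this process under $\iota$ has independent $\alpha$-stable wavelet coefficients of scales $\omega_m^j$, and the standard series convergence criterion for independent symmetric $\alpha$-stable summands shows that this image extends to a Radon measure on $\Besov{p}{s}{w}$ if and only if $\sum_{j,G,m}(\omega_m^j)^\alpha<\infty$, which by the same asymptotic estimate reduces to $s<d/p-d/2-d/\alpha$ and $w<-d/\alpha$. Since $\alpha<2$ gives $-d/\alpha<-d/2$ and $d/p-d/2-d/\alpha<-d+d/p$, at least one of these strict inequalities is violated for every $(s,w)\in R_p^{(p)}\setminus R_p$, producing a cylindrical random variable whose image does not Radonify and so obstructing the $0$-Radonifying property.

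The main obstacle will be the $\alpha$-stable step in the $p>2$ necessity. The wavelet-coefficient representation of the image measure and the application of Kolmogorov's three-series theorem to the heavy-tailed summands $(\omega_m^j)^p\lvert\xi_{(j,G,m)}\rvert^p$ must be executed carefully to identify $\sum(\omega_m^j)^\alpha<\infty$ as the precise extension criterion, and $\alpha\in(0,2)$ must be pinned down in terms of which coordinate of the complement is active. I expect to either invoke or parallel Theorem~\ref{thm:L2AlphaStable}, where exactly this analysis is carried out in full detail.
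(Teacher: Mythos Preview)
Your sufficiency argument is correct and essentially identical to the paper's: both factor through a Hilbert Besov space $\Besov{2}{s'}{w'}$ with $s',w'<-d/2$, use that Hilbert--Schmidt (equivalently $2$-Radonifying) operators between Hilbert spaces are $0$-Radonifying, and check via Proposition~\ref{prop:BesovEmbeddings} that such $s',w'$ exist exactly when $(s,w)\in R_p$. Your necessity reduction to Theorem~\ref{thm:pRadon} for $p\in(1,2]$ is also fine.

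The genuine gap is in your $p>2$ counterexample. Your claim that the canonical $\alpha$-stable process has \emph{independent} $\alpha$-stable wavelet coefficients is false: the joint characteristic function of $(L(1)\Psi_1,L(1)\Psi_2)$ is $\exp\big(-(\beta_1^2+\beta_2^2)^{\alpha/2}\big)$, not $\exp\big(-|\beta_1|^\alpha-|\beta_2|^\alpha\big)$, so three-series arguments for independent summands do not apply. More damagingly, the criterion you derive, $\sum_{j,G,m}(\omega_m^j)^\alpha<\infty$, is simply wrong for this process. Theorem~\ref{thm:L2AlphaStable} (which you propose to invoke) establishes that the canonical $\alpha$-stable process \emph{is} induced by a genuine L\'evy process in $\Besov{p}{s}{w}$ precisely when $s<-d/2$ and $w<-d/p$, i.e.\ for every $(s,w)\in R_p^{(p)}$. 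In particular it is induced throughout the strip $R_p^{(p)}\setminus R_p$ you are trying to exclude, so it furnishes no counterexample there at all.

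The paper closes this gap with a different construction: Proposition~\ref{prop:AlphaStable} builds, for each $(s,w)\in E_p\setminus R_p$ with $p>2$, a \emph{hedgehog} process along the wavelet basis with $\alpha$-stable one-dimensional components and a specifically chosen weight sequence $(a_m^{j,G})\in\ell^{2\alpha/(2-\alpha)}(\mathbb{W}^d)$ for which $\sum_{j,G,m}|a_m^{j,G}\omega_m^j|^\alpha=\infty$. That divergence, via the hedgehog analysis of Corollary~\ref{cor:HedgehogLevyMeasure} and Example~\ref{ex.stable}, is what obstructs the extension. You would need to replace your canonical-stable argument by this hedgehog construction (or something equivalent) to complete the necessity for $p>2$.
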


\begin{proof}
	We show sufficiency by factorising the embedding $\iota\colon L^2(\R^d)\to\Besov{p}{s}{w}$ as follows:
	\begin{align*}
		L^2(\R^d)\overset{\iota_1} \hookrightarrow
		\Besov{2}{s_1}{w_1}\overset{\iota_2} \hookrightarrow
		\Besov{p}{s}{w},
	\end{align*}
	for some  $s_1<-\tfrac{d}{2}$ and $w_1<-\tfrac{d}{2}$. In this case, Theorem \ref{thm:pRadon} guarantees that 
	the embedding $\iota_1$ is 2-Radonifying, and thus it is $0$-radonifying, since $2$-summing operators coincide with 
	$0$-Radonifying operators in Hilbert spaces according to \cite[Th.~VI.5.2]{Vakhania1987}. It remains to show 
	that $w_1$ and $s_1$ can always be chosen, such that  $\Besov{2}{s_1}{w_1}$ is continuously embedded into $	\Besov{p}{s}{w}$, whenever $s$ and $w$ are in the stated ranges.
	
	(i) Let $p\in(1,2]$.
	By applying Proposition \ref{prop:BesovEmbeddings} we see that $\iota_2$ is a continuous embedding for $w<w_1-d\left(\tfrac{1}{p}-\tfrac{1}{2}\right)$ and $s<s_1$. Thus, by defining $\epsilon:=-w-\tfrac{d}{p}>0$, we may take $w_1=-\tfrac{\epsilon}{2}-\tfrac{d}{2}$ which satisfies both required inequalities. A similar argument may be used for $s$ and $s_1$.
	
	(ii) Let $p>2$.
	By Proposition \ref{prop:BesovEmbeddings} we see that $\iota_2$ is a continuous embedding for $w<w_1$ and $s<s_1-d\left(\tfrac{1}{2}-\tfrac{1}{p}\right)$. We proceed by similar arguments as above.	

	The necessity for $p=2$ follows from Theorem \ref{thm:pRadon} since $0$-Radonifying and $p$-Radonifying operators between Hilbert spaces coincide.
	To show the necessity for $p\neq 2$, we shall refer to Theorem \ref{thm:L2AlphaStable} and Proposition \ref{prop:AlphaStable}, which provide counterexamples of cylindrical L\'evy processes in $L^2(\R^d)$ that are not induced by a process in $\Besov{p}{s}{w}$  for any $(s,w)\notin{R_{p}}$.
	Although these negative results referred to are later in the text, they are not based upon this Theorem.
\end{proof}

\begin{remark}
	Comparing the results in Theorem \ref{thm:pRadon} and Theorem \ref{thm:Radonification}, we have $R_{p}=R^{(p)}_{p}$ for  $p\in(1,2]$ and otherwise $R_{p}\subsetneq R^{(p)}_{p}$. 
\end{remark}

We summarise the Radonification regions in Figures 4.1 and 4.2 plotting $s$ and $w$ against $\tfrac{1}{p}$, which follows naturally from the ranges specified. We refer to diagrams plotting $s$ and $w$ against $\tfrac{1}{p}$ as Triebel diagrams\footnote{Plots of $s$ and $w$ against $\tfrac{1}{p}$ were described in \cite{Aziznejad2018} as `Triebel diagrams' and used to illustrate various properties of the scales of Besov spaces.}.
\begin{figure}[h!]
\centering
  \begin{tikzpicture}[]
   \begin{axis}[
   		scale=0.8,
   		axis lines=center,
   		xmin=0, xmax=1.1,
   		ymin=-3, ymax=3,
		xtick={0.5,1},
  		xticklabels={$\frac{1}{2}$,1},
		ytick distance=1, 
		ticklabel style={font=\small},
  		extra y ticks={-1,-2}, extra y tick labels={$-\frac{d}{2}$,$-d$},
		xlabel={$\frac{1}{p}$}, ylabel={$s$},
   		yticklabel=\empty ]

    \addplot [domain=0:1, samples=100, name path=f, thick, color=green!50]
        {min(2*x - 2,-1)};

    \addplot [domain=0:1, samples=100, name path=g, thick, color=red!50]
        {min(2*x - 1,0)};

    \path [name path=xbot]
      (axis cs:\pgfkeysvalueof{/pgfplots/xmin},\pgfkeysvalueof{/pgfplots/ymin}) --
      (axis cs:\pgfkeysvalueof{/pgfplots/xmax},\pgfkeysvalueof{/pgfplots/ymin});

    \path [name path=xtop]
      (axis cs:\pgfkeysvalueof{/pgfplots/xmin},\pgfkeysvalueof{/pgfplots/ymax}) --
      (axis cs:\pgfkeysvalueof{/pgfplots/xmax},\pgfkeysvalueof{/pgfplots/ymax});

    \addplot[orange!30, opacity=0.4] fill between[of=g and f, soft clip={domain=0:1}];
    \addplot[green!10, opacity=0.4] fill between[of=xbot and f, soft clip={domain=0:1}];
    \addplot[red!10, opacity=0.4] fill between[of=xtop and g, soft clip={domain=0:1}];

	\node[color=red, font=\footnotesize] at (axis cs: 0.5,1.5) {$E_{p}^c$};
	\node[color=orange, font=\footnotesize] at (axis cs: 0.75,-0.65) {$E_p\setminus R_{p}$};
	\node[color=green, font=\footnotesize] at (axis cs: 0.65,-2.2) {$R_{p}$};

    \end{axis}

	\end{tikzpicture}
	\qquad
	\begin{tikzpicture}[]
   \begin{axis}[
   		scale=0.8,
   		axis lines=center,
   		xmin=0, xmax=1.1,
   		ymin=-3, ymax=3,
		xtick={0.5,1},
  		xticklabels={$\frac{1}{2}$,1},
		ytick distance=1, 
		ticklabel style={font=\small},
  		extra y ticks={-1,-2}, extra y tick labels={$-\frac{d}{2}$,$-d$},
   		xlabel={$\frac{1}{p}$}, ylabel={$w$},
   		yticklabel=\empty ]

    \addplot [domain=0:1, samples=100, name path=f, thick, color=green!50]
        {min(-2*x,-1)};

    \addplot [domain=0:1, samples=100, name path=g, thick, color=red!50]
        {min(-2*x + 1,0)};

    \path [name path=xbot]
      (axis cs:\pgfkeysvalueof{/pgfplots/xmin},\pgfkeysvalueof{/pgfplots/ymin}) --
      (axis cs:\pgfkeysvalueof{/pgfplots/xmax},\pgfkeysvalueof{/pgfplots/ymin});

    \path [name path=xtop]
      (axis cs:\pgfkeysvalueof{/pgfplots/xmin},\pgfkeysvalueof{/pgfplots/ymax}) --
      (axis cs:\pgfkeysvalueof{/pgfplots/xmax},\pgfkeysvalueof{/pgfplots/ymax});

    \addplot[orange!30, opacity=0.4] fill between[of=g and f, soft clip={domain=0:1}];
    \addplot[green!10, opacity=0.4] fill between[of=xbot and f, soft clip={domain=0:1}];
    \addplot[red!10, opacity=0.4] fill between[of=xtop and g, soft clip={domain=0:1}];

	\node[color=red, font=\footnotesize] at (axis cs: 0.5,1.5) {$E_{p}^c$};
	\node[color=orange, font=\footnotesize] at (axis cs: 0.25,-0.65) {$E_p\setminus R_{p}$};
	\node[color=green, font=\footnotesize] at (axis cs: 0.65,-2.2) {$R_{p}$};

    \end{axis}

	\end{tikzpicture}
	\caption{Triebel diagrams for 0-Radonification}
\end{figure}
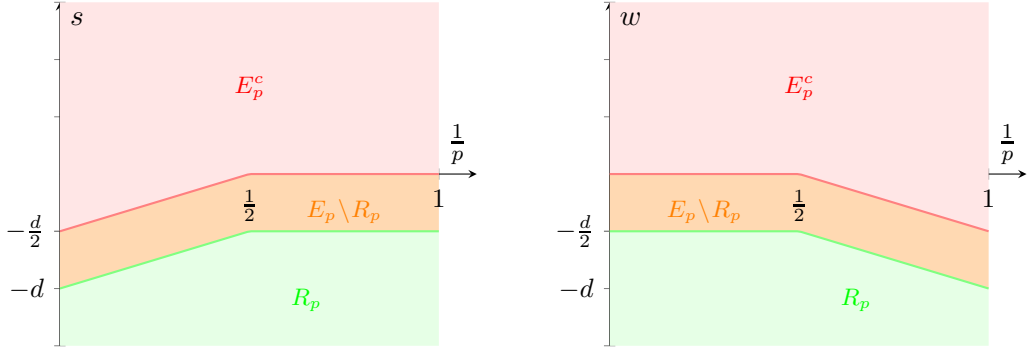
  
\begin{figure}[h!]
\centering
  \begin{tikzpicture}[]
   \begin{axis}[
   		scale=0.8,
   		axis lines=center,
   		xmin=0, xmax=1.1,
   		ymin=-3, ymax=3,
		xtick={0.5,1},
  		xticklabels={$\frac{1}{2}$,1},
		ytick distance=1, 
		ticklabel style={font=\small},
  		extra y ticks={-1,-2}, extra y tick labels={$-\frac{d}{2}$,$-d$},
		xlabel={$\frac{1}{p}$}, ylabel={$s$},
   		yticklabel=\empty ]

    \addplot [domain=0:1, samples=100, name path=f, thick, color=blue!50]
        {-1};

    \addplot [domain=0:1, samples=100, name path=g, thick, color=red!50]
        {min(2*x - 1,0)};

    \path [name path=xbot]
      (axis cs:\pgfkeysvalueof{/pgfplots/xmin},\pgfkeysvalueof{/pgfplots/ymin}) --
      (axis cs:\pgfkeysvalueof{/pgfplots/xmax},\pgfkeysvalueof{/pgfplots/ymin});

    \path [name path=xtop]
      (axis cs:\pgfkeysvalueof{/pgfplots/xmin},\pgfkeysvalueof{/pgfplots/ymax}) --
      (axis cs:\pgfkeysvalueof{/pgfplots/xmax},\pgfkeysvalueof{/pgfplots/ymax});

    \addplot[brown!30, opacity=0.4] fill between[of=g and f, soft clip={domain=0:1}];
    \addplot[blue!10, opacity=0.4] fill between[of=xbot and f, soft clip={domain=0:1}];
    \addplot[red!10, opacity=0.4] fill between[of=xtop and g, soft clip={domain=0:1}];

	\node[color=red, font=\footnotesize] at (axis cs: 0.5,1.5) {$E_{p}^c$};
	\node[color=brown, font=\footnotesize] at (axis cs: 0.75,-0.65) {$E_p\setminus R^{(p)}_{p}$};
	\node[color=blue, font=\footnotesize] at (axis cs: 0.65,-2.2) {$R^{(p)}_{p}$};

    \end{axis}

	\end{tikzpicture}
	\qquad
	\begin{tikzpicture}[]
   \begin{axis}[
   		scale=0.8,
   		axis lines=center,
   		xmin=0, xmax=1.1,
   		ymin=-3, ymax=3,
		xtick={0.5,1},
  		xticklabels={$\frac{1}{2}$,1},
		ytick distance=1, 
		ticklabel style={font=\small},
  		extra y ticks={-1,-2}, extra y tick labels={$-\frac{d}{2}$,$-d$},
   		xlabel={$\frac{1}{p}$}, ylabel={$w$},
   		yticklabel=\empty ]

    \addplot [domain=0:1, samples=100, name path=f, thick, color=blue!50]
        {-2*x};

    \addplot [domain=0:1, samples=100, name path=g, thick, color=red!50]
        {min(-2*x + 1,0)};

    \path [name path=xbot]
      (axis cs:\pgfkeysvalueof{/pgfplots/xmin},\pgfkeysvalueof{/pgfplots/ymin}) --
      (axis cs:\pgfkeysvalueof{/pgfplots/xmax},\pgfkeysvalueof{/pgfplots/ymin});

    \path [name path=xtop]
      (axis cs:\pgfkeysvalueof{/pgfplots/xmin},\pgfkeysvalueof{/pgfplots/ymax}) --
      (axis cs:\pgfkeysvalueof{/pgfplots/xmax},\pgfkeysvalueof{/pgfplots/ymax});

    \addplot[brown!30, opacity=0.4] fill between[of=g and f, soft clip={domain=0:1}];
    \addplot[blue!10, opacity=0.4] fill between[of=xbot and f, soft clip={domain=0:1}];
    \addplot[red!10, opacity=0.4] fill between[of=xtop and g, soft clip={domain=0:1}];

	\node[color=red, font=\footnotesize] at (axis cs: 0.5,1.5) {$E_{p}^c$};
	\node[color=brown, font=\footnotesize] at (axis cs: 0.75,-1) {$E_p\setminus R^{(p)}_{p}$};
	\node[color=blue, font=\footnotesize] at (axis cs: 0.65,-2.2) {$R^{(p)}_{p}$};

    \end{axis}

	\end{tikzpicture}
	\caption{Triebel diagrams for $p$-Radonification}
\end{figure}
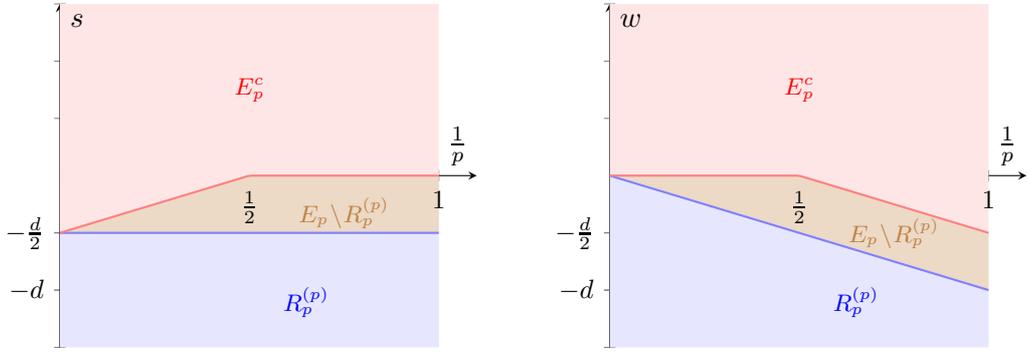

\section{Examples }
\label{sec:Applications}
In this section, we apply our previous results to investigate the regularity of some typical examples of cylindrical L\'evy processes as they often appear in the literature; see e.g. \cite{Brzezniak2010,Priola2011}

\subsection{Canonical cylindrical $\alpha$-stable processes}
A cylindrical L\'evy process $L=(L(t)\colon t\ge 0)$ in $L^2(\R^d)$ is called \emph{canonical $\alpha$-stable} for some $\alpha\in(0,2)$  if its characteristic function is of the form
\begin{align*}
	\phi_{L(t)}(u)=\exp(-t\norm{u}_{L^{2}}^\alpha),
	\qquad\qquad u\in L^{2}(\R^d).
\end{align*}
The existence of a cylindrical distribution with this characteristic function is guaranteed by Bochner's theorem for cylindrical measures;
see \cite[Prop.~IV.4.2]{Vakhania1987}; two possible explicit constructions can be found in \cite{Riedle2017}. 

\begin{theorem}
\label{thm:L2AlphaStable}
	Let $L$ be a canonical $\alpha$-stable cylindrical process in $L^2(\R^d)$ 
	for some $\alpha\in(0,2)$.
	Then  $L$ is induced by a L\'evy process in $\Besov{p}{s}{w}$ for some  $p> 1$ and $(s,w)\in E_{p}$ if and only if
$s<-\frac{d}{2}$ and $w<-\frac{d}{p}$.
\end{theorem}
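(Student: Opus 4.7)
My plan is to handle sufficiency by subordination of a cylindrical Brownian motion and necessity by exploiting the sub-Gaussian representation of the rotationally invariant $\alpha$-stable distribution, which decouples the dependent wavelet coefficients into an independent Gaussian family.

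For sufficiency, I would realise $L$ as $L(t)=W(T(t))$, where $W$ is a standard cylindrical Brownian motion on $L^2(\R^d)$ and $T$ is an independent positive $\alpha/2$-stable subordinator; conditioning on $T$ and matching Laplace transforms shows this yields the canonical $\alpha$-stable characteristic function (up to a constant absorbed by time-scaling). The assumption $s<-\tfrac{d}{2}$ and $w<-\tfrac{d}{p}$ is precisely the regularisation criterion for $W$ quoted in Section~4.1, so $W$ is induced by a genuine Brownian motion $W^B$ in $\Besov{p}{s}{w}$. Then $Y(t):=W^B(T(t))$ defines a L\'evy process in $\Besov{p}{s}{w}$ which induces $L$.

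For necessity, suppose $L$ is induced by a L\'evy process $Y$ in $\Besov{p}{s}{w}$, and fix an admissible basis $\Psi=\{\Psi_m^{j,G}\}$ of $\Besov{p}{s}{w}$; recall this is simultaneously an orthonormal basis of $L^2(\R^d)$. Set $\zeta_m^{j,G}:=L(1)(\Psi_m^{j,G})$; by the inducing relation, $\zeta_m^{j,G}=\Scapro{\Psi_m^{j,G}}{Y(1)}$ a.s. A computation of joint characteristic functions shows $(\zeta_m^{j,G})$ is jointly rotationally invariant $\alpha$-stable, so by the classical sub-Gaussian representation,
\begin{equation*}
(\zeta_m^{j,G})_{(j,G,m)\in\mathbb{W}^d}\overset{d}{=}\big(\sqrt{\tau}\,g_m^{j,G}\big)_{(j,G,m)\in\mathbb{W}^d},
\end{equation*}
where $(g_m^{j,G})$ are i.i.d.\ $\mathcal{N}(0,1)$ and $\tau$ is an independent positive $\alpha/2$-stable random variable.

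Since $Y(1)\in\Besov{p}{s}{w}$ a.s., formula \eqref{eq:NormB} gives $\sum_{j,G,m}(\omega_m^j)^p|\zeta_m^{j,G}|^p<\infty$ a.s. Because this is a Borel event on the sequence space, the distributional equality above transfers the conclusion to $\sqrt{\tau}\,g$, and $\tau>0$ a.s.\ yields $\sum_{j,G,m}(\omega_m^j)^p|g_m^{j,G}|^p<\infty$ a.s. This is a series of independent nonnegative random variables: the $|g_m^{j,G}|^p$ are i.i.d.\ with $E|g|^p\in(0,\infty)$ and finite variance, while $(\omega_m^j)^p$ is easily seen to be bounded on $\mathbb{W}^d$ whenever $(s,w)\in E_p$. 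A Chebyshev-type lower bound on the partial sums then forces $\sum_{j,G,m}(\omega_m^j)^p<\infty$, which by \eqref{eq.w-sum-R} is equivalent to $(s,w)\in R_p^{(p)}$, i.e.\ $s<-\tfrac{d}{2}$ and $w<-\tfrac{d}{p}$. The principal obstacle is that the coefficients $\zeta_m^{j,G}$ themselves are \emph{not} independent, so Kolmogorov-type series theorems are not directly applicable; the sub-Gaussian representation is the key device, as it concentrates all the cross-dependence into a single positive scalar $\tau$ and reduces the question to an independent-sum criterion for the Gaussian factor.
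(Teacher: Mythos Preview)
Your argument is correct and takes a genuinely different, more probabilistic route than the paper. The paper works directly with the cylindrical L\'evy measure: for sufficiency with $p>2$ it computes the integrals $\Sigma_n$ and $\Upsilon_n$ appearing in Corollary~\ref{co.cylindrical-in-Besov} via the spherical disintegration $\nu_n(\d r\,\d\xi)\propto r^{-1-\alpha}\,\d r\,\lambda_n(\d\xi)$ and controls them with Jensen's inequality and Gamma-function asymptotics; for necessity it bounds the same quantities from below by Jensen applied in the opposite direction. Your approach bypasses all of this by recognising the sub-Gaussian structure: subordination reduces sufficiency to the Gaussian regularisation statement already quoted in Section~4.1, and the representation $\zeta\overset{d}{=}\sqrt{\tau}\,g$ reduces necessity to an independent Gaussian series, where summability of the weights is immediate. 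Your argument is shorter, avoids case-splitting on $p$, and makes transparent \emph{why} the $\alpha$-stable threshold coincides with the Gaussian one; the paper's computation, by contrast, stays entirely within the L\'evy-measure framework of Section~\ref{sec:LMandLPinBesov} and does not invoke any Gaussian comparison.

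Two places deserve a line of extra care. First, in the sufficiency step you need $W(T(t))u^\ast=\langle W^B(T(t)),u^\ast\rangle$ a.s.\ at the \emph{random} time $T(t)$; this follows once you pass to continuous versions of both real-valued processes $s\mapsto W(s)u^\ast$ and $s\mapsto\langle W^B(s),u^\ast\rangle$, which then agree for all $s$ simultaneously. Second, your ``Chebyshev-type lower bound'' is a little imprecise: what you actually need is that $\sum_k c_k|g_k|^p=\infty$ a.s.\ whenever $(c_k)$ is bounded with $\sum_k c_k=\infty$ and the $|g_k|^p$ are i.i.d.\ with positive finite mean. This is standard (e.g.\ via the Laplace transform: $E\exp(-\sum c_k|g_k|^p)=\prod_k E\exp(-c_k|g_k|^p)=0$ since $1-E e^{-c_k|g_k|^p}\gtrsim c_k$), but it is not literally Chebyshev, so name the tool you use. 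Neither point affects correctness; they are just details to state explicitly.
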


\begin{proof}
	Let $\Psi$ be an admissible basis of $\Besov{p}{s}{w}$. 
	We recall that for an arbitrary enumeration of the indices $(j,G,m)\in\mathbb{W}^d$, we denote a sum over the first $n$ terms in this enumeration by $\sum_{j,G,m}^n$; furthermore let $\Psi_k$ be the wavelet $\Psi_{m_k}^{j_k,G_k}$ corresponding to the $k$-th term in this enumeration.	Letting $\mu$ be the cylindrical L\'evy measure of $L$, 
	Lemma 2.4 in \cite{Riedle2017} shows that the L\'evy measure $\nu_n:=\mu\circ\pi_{\Psi_1,\ldots,\Psi_n}^{-1}$ of any $n$-dimensional projection is given by
	\begin{align*}
		\nu_n(B)=\frac{\alpha}{c_\alpha}\int_{S^n}\lambda_n({\rm d}\xi)\int_0^\infty\1_B(r\xi)r^{-1-\alpha}\,{\rm d}r
		\qquad\text{ for all }B\in\Borel(\R^n), 
	\end{align*}
	where $\lambda_n$  is uniformly distributed on the sphere $S^n=\{\xi\in\R^n\colon \abs{\xi}=1\}$ with
	\begin{align*}
		\lambda_n(S^n)=r_n:=\frac{\Gamma(\frac{1}{2})\Gamma(\frac{n+\alpha}{2})}{\Gamma(\frac{n}{2})\Gamma(\frac{1+\alpha}{2})}
	\qquad\text{and}\qquad 
		c_\alpha=
		\begin{cases}
			-\alpha\cos(\frac{\alpha\pi}{2})\Gamma(-\alpha),&\text{if }\alpha\neq 1,\\
			\frac{\alpha\pi}{2},&\text{if }\alpha=1.
		\end{cases}
	\end{align*}
	First we show sufficiency of the conditions. For $p\in(1,2]$ this follows directly from the fact that the embedding of $L^2(\R^d)\hookrightarrow \Besov{p}{s}{w}$ is 0-Radonifying according to Theorem \ref{thm:Radonification}. 
	For $p> 2$, we will establish the Borel extension of $\mu$	 by the result in Theorem \ref{thm:RadonTest} and then apply Corollary \ref{co.cylindrical-in-Besov}. We choose an enumeration of the indices $j, G$ and $m$ and observe that  the projection $P_n$ onto the subspace spanned by the first $n$ elements  in the enumeration of $\Psi$, defined in  \eqref{eq:Projection}, satisfies
\begin{align*}
	\norm{P_n f}_{\besov{p}{s}{w}}^p
	=\sum_{j,G,m}^n (\omega_m^j)^p\abs{\Scapro{\Psi_m^{j,G}}{f}}^p,
	\qquad f\in\Besov{p}{s}{w}.
\end{align*}
We obtain for each $n\in\N$ that 
	\begin{align}
	\label{eq:Sigman}
		\Sigma_n &:=\int_{\besov{p}{s}{w}}\left(\norm{P_n f}_{\besov{p}{s}{w}}^p\wedge 1\right)\,\mu({\rm d}f)\notag\\
		&=\int_{\R^n}\left(\sum_{j,G,m}^n (\omega_m^j)^p\abs{\beta_m^{j,G}}^p\wedge 1\right)\,(\mu\circ\pi^{-1}_{\Psi_1,\ldots,\Psi_n})({\rm d}\beta)\notag\\
		&=\frac{\alpha}{c_\alpha} \int_{S^n}\int_0^\infty\left(\sum_{j,G,m}^n (\omega_m^j)^p\abs{r\xi_m^{j,G}}^p\wedge 1\right)r^{-1-\alpha}\,{\rm d}r \, \lambda_n({\rm d}\xi)\notag\\
		&=\frac{p}{c_\alpha(p-\alpha)} \int_{S^n}\left(\sum_{j,G,m}^n (\omega_m^j)^p\abs{\xi_m^{j,G}}^p\right)^{\alpha/p}\,\lambda_n({\rm d}\xi).
	\end{align}
	Letting $\lambda_n^1:=\tfrac{1}{r_n}\lambda_n$,  Jensen's inequality implies
	\begin{align*}
		\Sigma_n &=\frac{pr_n}{c_\alpha(p-\alpha)} \int_{S^n}\left(\sum_{j,G,m}^n (\omega_m^j)^p\abs{\xi_m^{j,G}}^p\right)^{\alpha/p}\,\lambda_n^1({\rm d}\xi)\\
		&\le \frac{pr_n}{c_\alpha(p-\alpha)} \left(\int_{S^n}\sum_{j,G,m}^n (\omega_m^j)^p\abs{\xi_m^{j,G}}^p\,\lambda_n^1({\rm d}\xi)\right)^{\alpha/p}.
	\end{align*}
	By Lemma A.2 in \cite{Riedle2017} we have $\int_{S^n}\abs{\xi_m^{j,G}}^p\,\lambda_n^1({\rm d}\xi)=\tfrac{\Gamma(\frac{n}{2})\Gamma(\frac{1+p}{2})}{\Gamma(\frac{1}{2})\Gamma(\frac{n+p}{2})}$ and thus
	\begin{align*}
		\Sigma_n &\le\frac{pr_n}{c_\alpha(p-\alpha)} \left(\sum_{j,G,m}^n (\omega_m^j)^p \frac{\Gamma(\frac{n}{2})\Gamma(\frac{1+p}{2})}{\Gamma(\frac{1}{2})\Gamma(\frac{n+p}{2})} \right)^{\alpha/p}\\
		&= \frac{p}{c_\alpha(p-\alpha)} \frac{\Gamma(\frac{1}{2})\Gamma(\frac{n+\alpha}{2})}{\Gamma(\frac{n}{2})\Gamma(\frac{1+\alpha}{2})} \left(\frac{\Gamma(\frac{n}{2})\Gamma(\frac{1+p}{2})}{\Gamma(\frac{1}{2})\Gamma(\frac{n+p}{2})} \right)^{\alpha/p}\left(\sum_{j,G,m}^n (\omega_m^j)^p \right)^{\alpha/p}.
	\end{align*}
	Since $\frac{\Gamma(x+\alpha)}{\Gamma(x)}=x^\alpha\big(1+\O(x^{-1}))$ as $x\to\infty$ \cite[Prop.~2.1.3]{Beals2010},  
	we conclude that  $\Sigma_n$ converges to a finite limit $\Sigma_\infty$ as $n\to\infty$ since $\sum_{j,G,m} (\omega_m^j)^p<\infty$
	given $s<-\tfrac{d}{2}$ and $w<-\tfrac{d}{p}$ (see the proof of \cite[Th.~3]{Fageot2016}).  
	Next we derive 
	\begin{align*}
		\Upsilon_n := &\sum_{j,G,m}^n (\omega_m^j)^p \left(\int_{\besov{p}{s}{w}} \1_{B_{\R}}\big(\norm{P_n f}_{\besov{p}{s}{w}}\big)\abs{\Scapro{\Psi_m^{j,G}}{f}}^2\,\mu({\rm d}f)\right)^{p/2}\\
		= &\sum_{j,G,m}^n (\omega_m^j)^p \left(\int_{\R^n} \1_{B_{\R}}\left(\sum_{i,H,l}^n (\omega_l^i)^p\abs{\beta_l^{i,H}}^p\right)\abs{\beta_m^{j,G}}^2\,(\mu\circ\pi^{-1}_{\Psi_1,\ldots ,\Psi_n})({\rm d}\beta)\right)^{p/2}\\
		= &\sum_{j,G,m}^n (\omega_m^j)^p \left(\frac{\alpha}{c_\alpha}\int_{S^n}\int_0^\infty \1_{B_{\R}}\left(\sum_{i,H,l}^n (\omega_l^i)^p\abs{r\xi_l^{i,H}}^p\right)\abs{r\xi_m^{j,G}}^2 r^{-1-\alpha}\,{\rm d}r\,\lambda_n({\rm d}\xi)\right)^{p/2}\\
		= &\left(\frac{\alpha}{c_\alpha(2-\alpha)}\right)^{p/2} \sum_{j,G,m}^n (\omega_m^j)^p \left(\int_{S^n}\abs{\xi_m^{j,G}}^2\left(\sum_{i,H,l}^n (\omega_l^i)^p\abs{\xi_l^{i,H}}^p\right)^{(\alpha-2)/p}\,\lambda_n({\rm d}\xi)\right)^{p/2}.
	\end{align*}
	Applying first Jensen's inequality and then H\"older's inequality, we obtain
	\begin{align*}
		\Upsilon_n &\le \left(\frac{\alpha}{c_\alpha(2-\alpha)}\right)^{p/2} r_n^{p/2} \sum_{j,G,m}^n (\omega_m^j)^p \int_{S^n}\abs{\xi_m^{j,G}}^p\left(\sum_{i,H,l}^n (\omega_l^i)^p\abs{\xi_l^{i,H}}^p\right)^{(\alpha-2)/2}\,\lambda_n^1({\rm d}\xi)\\
		 & =\left(\frac{\alpha}{c_\alpha(2-\alpha)}\right)^{p/2} r_n^{p/2} \int_{S^n}\left(\sum_{j,G,m}^n (\omega_m^j)^p \abs{\xi_m^{j,G}}^p\right)^{\alpha/2}\,\lambda_n^1({\rm d}\xi)\\
		& \le\left(\frac{\alpha}{c_\alpha(2-\alpha)}\right)^{p/2} r_n^{p/2} \left(\int_{S^n} \sum_{j,G,m}^n (\omega_m^j)^p \abs{\xi_m^{j,G}}^p\,\lambda_n^1({\rm d}\xi)\right)^{\alpha/2}\\
		& =\left(\frac{\alpha}{c_\alpha(2-\alpha)}\right)^{p/2} r_n^{p/2} \left(\sum_{j,G,m}^n (\omega_m^j)^p\right)^{\alpha/2}\left(\frac{\Gamma(\frac{n}{2})\Gamma(\frac{1+p}{2})}{\Gamma(\frac{1}{2})\Gamma(\frac{n+p}{2})} \right)^{\alpha/2}.
	\end{align*}
	Since by properties of the Gamma function we have 
	\begin{align*}
		r_n^{p/2} \left(\frac{\Gamma(\frac{n}{2})\Gamma(\frac{1+p}{2})}{\Gamma(\frac{1}{2})\Gamma(\frac{n+p}{2})} \right)^{\alpha/2}
		= \left(\frac{\Gamma(\frac{1}{2})\Gamma(\frac{n+\alpha}{2})}{\Gamma(\frac{n}{2})\Gamma(\frac{1+\alpha}{2})} \right)^{p/2} \left(\frac{\Gamma(\frac{n}{2})\Gamma(\frac{1+p}{2})}{\Gamma(\frac{1}{2})\Gamma(\frac{n+p}{2})} \right)^{\alpha/2}
		=\O(1)\quad\text{as }n\to\infty,
	\end{align*}
	it follows that $\Upsilon_n$ has a finite limit as $n\to\infty$ as $\sum_{j,G,m}^n (\omega_m^j)^p$ does. 

    To verify the condition in Theorem \ref{thm:RadonTest}, we fix $R>0$ and conclude 
	\begin{align*}
		\mu\left(\left\{f:\norm{P_n f}_{\besov{p}{s}{w}}^p>R^p\right\}\right)
		&=\int_{\R^n} \1_{\Big\{\sum_{j,G,m}^n (\omega_m^j)^p\abs{\beta_m^{j,G}}^p>R^p\Big\}}(\beta)\,(\mu\circ\pi^{-1}_{\Psi_1,\ldots,\Psi_n})({\rm d}\beta)\\
		&=\frac{\alpha}{c_\alpha} \int_{S^n}\int_{\sum_{j,G,m}^n(\omega_m^j)^p\abs{r\xi_m^{j,G}}^p>R^p}^\infty r^{-1-\alpha}\,{\rm d}r \, \lambda_n({\rm d}\xi)\\
		&=R^{-\alpha}c_\alpha^{-1}\int_{S^n}\left(\sum_{j,G,m}^n (\omega_m^j)^p\abs{\xi_m^{j,G}}^p\right)^{\alpha/p}\,\lambda_n({\rm d}\xi)\\
		&=R^{-\alpha}\frac{p-\alpha}{p}\Sigma_n.
	\end{align*}
	As $\Sigma_n\to\Sigma_\infty<\infty$ as $n\to\infty$, we obtain $\lim_{R\to\infty}\lim_{n\to\infty}\mu\big(\{f:\norm{P_n f}_{\besov{p}{s}{w}}>R\}\big)=0$. Theorem \ref{thm:RadonTest} implies that $\mu$ can be extended to a $\sigma$-finite measure on $\Besov{p}{s}{w}$. As we have shown $\lim_{n\to\infty}\Sigma_n <\infty$ and $\lim_{n\to\infty}\Upsilon_n <\infty$, applying Corollary \ref{co.cylindrical-in-Besov}  completes the proof for sufficiency. 
	
	We now show the necessity of the conditions in the hypothesis. First we consider the case $p\ge 2$.
	We define $A_n:=\sum_{j,G,m}^n (\omega_m^j)^p$ and, applying Jensen's inequality to the concave 
	function $x\mapsto x^{\alpha/p}$, we obtain from \eqref{eq:Sigman}, using again \cite[Lem.~A.2]{Riedle2017},
	\begin{align*}
		\Sigma_n 
		&=\frac{pA_n^{\alpha/p}}{c_\alpha(p-\alpha)} \int_{S^n}\left(\sum_{j,G,m}^n A_n^{-1} (\omega_m^j)^p\abs{\xi_m^{j,G}}^p\right)^{\alpha/p}\,\lambda_n({\rm d}\xi)\\
		&\ge \frac{pA_n^{\alpha/p}}{c_\alpha(p-\alpha)} \int_{S^n}\sum_{j,G,m}^n A_n^{-1} (\omega_m^j)^p\abs{\xi_m^{j,G}}^\alpha\,\lambda_n({\rm d}\xi)
		= \frac{pA_n^{\alpha/p}}{c_\alpha(p-\alpha)}.
	\end{align*}
	For $s\ge-\tfrac{d}{2}$ or $w\ge-\tfrac{d}{p}$ we have $A_n\to\infty$ as $n\to\infty$ (see the proof of \cite[Th.~3]{Fageot2016}). It follows $\Sigma_n\to\infty$ as $n\to\infty$, showing that Condition \eqref{eq:CylInBes1} is not verified. 
		
	For $p\in(1,2)$ we calculate as above for $\Sigma_n$ that 
	\begin{align*}
		\Lambda_n &:=\int_{\besov{p}{s}{w}}\left(\norm{P_n f}_{\besov{p}{s}{w}}^2\wedge 1\right)\,\mu({\rm d}f)
		=\frac{2}{c_\alpha(2-\alpha)} \int_{S^n}\left(\sum_{j,G,m}^n (\omega_m^j)^p\abs{\xi_m^{j,G}}^p\right)^{\alpha/p}\,\lambda_n({\rm d}\xi).
	\end{align*}
	    For $p\ge\alpha$, we can conclude as above for $\Sigma_n$ that $\Lambda_n\to\infty$ as $n\to\infty$ for $s\ge-\tfrac{d}{2}$ or $w\ge-\tfrac{d}{p}$. It follows that 
	    Condition \eqref{eq:CylInBes2}  is not met for these values of $s$ and $w$, which shows the necessity.
        For $p<\alpha$, applying Jensen's inequality   to the convex  function $x\mapsto x^{\alpha/p}$
          gives us
	\begin{align*}
		\Lambda_n 
		&=\frac{2r_n}{c_\alpha(2-\alpha)}\int_{S^n}\bigg(\sum_{j,G,m}^n (\omega_m^j)^p\abs{\xi_m^{j,G}}^p\bigg)^{\alpha/p}\,\lambda_n^1({\rm d}\xi)\\
		&\ge \frac{2r_n}{c_\alpha(2-\alpha)}\bigg(\int_{S^n}\sum_{j,G,m}^n (\omega_m^j)^p\abs{\xi_m^{j,G}}^p\,\lambda_n^1({\rm d}\xi)\bigg)^{\alpha/p}
		= \frac{2r_n}{c_\alpha(2-\alpha)}\bigg(\sum_{j,G,m}^n (\omega_m^j)^p \frac{\Gamma(\frac{n}{2})\Gamma(\frac{1+p}{2})}{\Gamma(\frac{1}{2})\Gamma(\frac{n+p}{2})} \bigg)^{\alpha/p}. 
	\end{align*}
	Since $\frac{\Gamma(x+\alpha)}{\Gamma(x)}=x^\alpha\big(1+\O(x^{-1}))$ as $x\to\infty$, see \cite[Prop.~2.1.3]{Beals2010},
 and  $\sum_{j,G,m} (\omega_m^j)^p=\infty$,  it follows $\Lambda_n\to\infty$ as $n\to\infty$, showing that	    Condition \eqref{eq:CylInBes2} in Corollary \ref{co.cylindrical-in-Besov} is not met.
\end{proof}

 Recall that $R_{p}$ denotes the $(s,w)$-plane where the embeddings $L^2(\R^d)\hookrightarrow\Besov{p}{s}{w}$ are 0-Radonifying. 
	For $p\le 2$, Theorem \ref{thm:L2AlphaStable} states that $L$ is induced by a L\'evy process in $\Besov{p}{s}{w}$  if and only if $(s,w)\in R_{p}$.  However, for $p>2$ the Theorem gives a stronger result, in that the region of the $(s,w)$-plane where $L$ is induced by a L\'evy process in $\Besov{p}{s}{w}$  is a proper superset of $R_{p}$.

\subsection{Hedgehog cylindrical L\'evy process}

In this section let $L$ be a cylindrical L\'evy process in $L^2(\R^d)$ of the form 
\begin{align}\label{eq.hedgehog}
L(t)f=\sum_{k=1}^\infty \Scapro{f}{e_k} a_k \ell_k
\qquad\text{for all }f\in L^2(\R^d), \, t\ge 0, 
\end{align}
where $(e_k)_{k\in\N}$ is an orthonormal basis of $L^2(\R^d)$ and $(\ell_k)_{k\in\N}$ are identically distributed and independent symmetric real-valued L\'evy processes with characteristics $(0,0,\rho)$ for a L\'evy measure $\rho\neq 0$ in $\R$. By Theorem \ref{thm:Symmetric} it is sufficient for our analysis to focus on the symmetric case.
The sequence $(a_k)_{k\in\N}$  is real-valued and satisfies
\begin{align}
\label{eq:HedgehogWeightSum}
	\sum_{k=1}^\infty \int_{\R}\big(\abs{a_k c_k \beta}^2\wedge 1\big)\,\rho(\d \beta)<\infty
\end{align}
for each $(c_k)_{k\in\N}\in\ell^2(\R)$.
This condition guarantees that the sum in \eqref{eq.hedgehog} converges $P$-a.s.\ in $\R$; see  \cite[Lem.~4.2]{Riedle2015}. 

The support of the cylindrical L\'evy measure $\mu$ of $L$ is in $\bigcup_{k\in\N}\{\beta e_k:\, \beta\in\R\}$, as $(\ell_k)_{k\in\N}$ are independent, that is to say the measure only has weight on the axes. For this reason, 
 we refer to this process as a \emph{hedgehog cylindrical process}. 

We first present further corollaries to Theorem \ref{thm:LevyMeasureBesov} and Remark \ref{rem:TypeSuff} tailored to this setting.
\begin{corollary} 
\label{cor:HedgehogLevyMeasure}
Let $L$ be a cylindrical L\'evy process of the form as in \eqref{eq.hedgehog}, where
 $\{e_k\}_{k\in\N}\subseteq\Besov{p'}{-s}{-w}$ 
for some $p>1$ and $(s,w)\in E_{p}$.
Then $L$ is induced by a L\'evy process in $\Besov{p}{s}{w}$ if and only if 
\begin{enumerate}
\item[{\rm (1)}] for $p\ge 2$, 
\begin{align}
&\sum_{k=1}^\infty \int_{\R} \left( \norm{a_ke_k}_{\besov{p}{s}{w}}^p \abs{\beta}^p \wedge 1\right)\, \rho(\d \beta)<\infty, \label{eq.hedgehog-cond-1}\\
&\sum_{j,G,m} (\omega_m^j)^p\Bigg(\sum_{k=1}^\infty\abs{\Scapro{\Psi_m^{j,G}}{a_k e_k}}^2\int_{\abs{\beta}\le\norm{a_k e_k}_{\besov{p}{s}{w}}^{-1}}\beta^2\,\rho({\rm d}\beta)\Bigg)^{p/2}<\infty;\label{eq.hedgehog-cond-2}
\end{align}
\item[{\rm (2)}] for $p\in (1,2)$, 
\begin{align}
&\sum_{k=1}^\infty \int_{\R} \left( \norm{a_ke_k}_{\besov{p}{s}{w}}^2 \abs{\beta}^2 \wedge 1\right)\, \rho(\d \beta)<\infty, \label{eq.hedgehog-cond-3}\\
& \sum_{j,G,m} (\omega_m^j)^p \int_0^\infty \left(1-e^{\sum_{k=1}^\infty\int_{\beta\le\norm{a_k e_k}_{\besov{p}{s}{w}}^{-1}}(\cos\tau\Scapro{\Psi_m^{j,G}}{a_k e_k}\beta-1)\,\rho({\rm d}\beta)}\right)\,\frac{{\rm d}\tau}{\tau^{1+p}}<\infty.\label{eq.hedgehog-cond-4}
\end{align}
\end{enumerate}
\end{corollary}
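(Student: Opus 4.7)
The plan is to combine Theorem \ref{thm:Symmetric} with Theorem \ref{thm:LevyMeasureBesov}. Since the real-valued L\'evy processes $(\ell_k)_{k\in\N}$ are symmetric, so is $L$ together with its cylindrical L\'evy measure $\mu$; Theorem \ref{thm:Symmetric} therefore reduces the statement to showing that $\mu$ admits a $\sigma$-finite Borel extension on $\Besov{p}{s}{w}$ which is a L\'evy measure there. I would identify this extension explicitly and then check that the conditions of Theorem \ref{thm:LevyMeasureBesov} applied to it are precisely equivalent to \eqref{eq.hedgehog-cond-1}--\eqref{eq.hedgehog-cond-4}.

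Because the $(\ell_k)_{k\in\N}$ are independent, $\mu$ is concentrated on the union of axes $\bigcup_k \R e_k$. I would accordingly define $T_k\colon\R\to\Besov{p}{s}{w}$ by $T_k\beta := \beta a_k e_k$ and set $\tilde\mu := \sum_{k\in\N}\rho\circ T_k^{-1}$, which is a $\sigma$-additive Borel measure on $\Besov{p}{s}{w}$ with $\tilde\mu(\{0\})=0$. To verify that $\tilde\mu$ is an extension of $\mu$, I would compare finite-dimensional projections: for any $u_1^\ast,\ldots,u_n^\ast\in\Besov{p'}{-s}{-w}$, the image $\tilde\mu\circ\pi_{u_1^\ast,\ldots,u_n^\ast}^{-1}$ is the L\'evy measure of the $\R^n$-valued random vector $(\sum_k a_k\Scapro{e_k}{u_j^\ast}\ell_k(1))_{j=1}^n$, which by independence of $(\ell_k)_{k\in\N}$ coincides with $\mu_{u_1^\ast,\ldots,u_n^\ast}$. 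Either of the first conditions \eqref{eq.hedgehog-cond-1} or \eqref{eq.hedgehog-cond-3} then implies $\tilde\mu(\{f\colon\norm{f}_{\besov{p}{s}{w}}>r\})<\infty$ for every $r>0$, so $\tilde\mu$ is $\sigma$-finite.

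With this extension at hand, I would expand each condition of Theorem \ref{thm:LevyMeasureBesov} using $\tilde\mu = \sum_k\rho\circ T_k^{-1}$. For instance, for $p\ge 2$,
\begin{align*}
\int_{\besov{p}{s}{w}}\big(\norm{f}_{\besov{p}{s}{w}}^p\wedge 1\big)\,\tilde\mu(\d f)
= \sum_{k=1}^\infty\int_\R\big(\abs{\beta}^p\norm{a_k e_k}_{\besov{p}{s}{w}}^p\wedge 1\big)\,\rho(\d\beta),
\end{align*}
which gives the equivalence of \eqref{eq:BesovMeas3} and \eqref{eq.hedgehog-cond-1}, while the analogous calculation exploiting the one-dimensional support of each $\rho\circ T_k^{-1}$ yields
\begin{align*}
\int_{\norm{f}_{\besov{p}{s}{w}}\le 1}\Scapro{\Psi_m^{j,G}}{f}^2\,\tilde\mu(\d f)
= \sum_{k=1}^\infty\Scapro{\Psi_m^{j,G}}{a_k e_k}^2\int_{\abs{\beta}\le\norm{a_k e_k}_{\besov{p}{s}{w}}^{-1}}\beta^2\,\rho(\d\beta),
\end{align*}
turning \eqref{eq:BesovMeas2} into \eqref{eq.hedgehog-cond-2}. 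The case $p\in(1,2)$ is handled identically, deriving \eqref{eq.hedgehog-cond-3} from \eqref{eq:BesovMeas1} and \eqref{eq.hedgehog-cond-4} from \eqref{eq:BesovMeas5} by splitting the inner integral in \eqref{eq:BesovMeas5} along the axes.

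The principal technical point I anticipate is justifying that $\tilde\mu$ genuinely agrees with $\mu$ on all of $\mathcal{Z}_\ast(\Besov{p}{s}{w})$ and not merely on the cylinder sets generated by $(\scapro{\cdot}{e_k}_{L^2})_{k\in\N}$; the matching of finite-dimensional projections sketched above handles this, but one must invoke the identification from Section \ref{sec:Besov} to make sense of $\Scapro{e_k}{u^\ast}$ for a general $u^\ast\in\Besov{p'}{-s}{-w}$, noting that $e_k\in L^2(\R^d)$ embeds continuously into $\Besov{p}{s}{w}$ and $u^\ast\in(\Besov{p}{s}{w})^\ast$, so the pairing $\Scapro{e_k}{u^\ast}=\scapro{e_k}{u^\ast}_{\besov{p}{s}{w}}$ is unambiguous.
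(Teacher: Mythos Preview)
Your proposal is correct and follows essentially the same strategy as the paper: reduce via Theorem \ref{thm:Symmetric} to checking that the cylindrical L\'evy measure extends to a L\'evy measure on $\Besov{p}{s}{w}$, identify the extension as being supported on the axes $\bigcup_k\R a_k e_k$, and then unwind the conditions of Theorem \ref{thm:LevyMeasureBesov} along those axes. The paper differs only presentationally: instead of writing down $\tilde\mu=\sum_k\rho\circ T_k^{-1}$ at the outset, it invokes external lemmas (Lemma~4.2 in \cite{Riedle2015} and Lemma~3.10 in \cite{Kosmala2020}) for the projection formula $\mu\circ\pi_{e_1,\ldots,e_n}^{-1}=\sum_{k=1}^n\rho\circ m_{a_k}$ and then passes to the limit $n\to\infty$ by monotone convergence; your direct construction of $\tilde\mu$ accomplishes the same thing without those citations.
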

\begin{proof}
	Let $\Psi$ be an admissible basis for $\Besov{p}{s}{w}$. Lemma 4.2 in \cite{Riedle2015} and Lemma 3.10 in \cite{Kosmala2020} show that  the cylindrical L\'evy measure $\mu$ of $L$ extends to a Borel measure on $L^2(\R^d)$ 
	with projection on the $n$-th partial sum given by
	\begin{align}
	\label{eq:HedgehogPartialMeasure}
		(\mu\circ\pi_{e_1, \ldots, e_n}^{-1})({\rm d}\beta_1\cdots{\rm d}\beta_n)
		=\sum_{k=1}^n (\rho\circ m_{a_k})(\d \beta_k),
	\end{align}
	where $m_{a_k}\colon \R\to\R$ is defined by $m_{a_k}(\beta)=a_k\beta$. 
	As $\{e_k\}_{k\in\N}\subseteq\Besov{p'}{-s}{-w}$, the same relationship holds when we consider the pushforward of $\mu$ to $\Besov{p}{s}{w}$.
   Because of Theorem \ref{thm:Symmetric} it is sufficient to show that the claimed conditions are equivalent to 
   the conditions in Theorem \ref{thm:LevyMeasureBesov}. 

	For $p>1$ we define $q:=p\vee 2$ and calculate for each $n\in\N$, applying \eqref{eq:HedgehogPartialMeasure},
	\begin{align*}
		&\int_{\besov{p}{s}{w}}\left(\left(\sum_{j,G,m}\abs{\omega_m^j\sum_{k=1}^n\Scapro{\Psi_m^{j,G}}{e_k}\Scapro{e_k}{f}}^p\right)^{q/p}\wedge 1\right)\,\mu({\rm d}f)\\
		&=\int_{\R^n}\left(\left(\sum_{j,G,m}\abs{\omega_m^j\sum_{k=1}^n\Scapro{\Psi_m^{j,G}}{e_k}\beta_k}^p\right)^{q/p}\wedge 1\right)\,(\mu\circ\pi_{e_1,\ldots ,e_n}^{-1})({\rm d}\beta_1\cdots{\rm d}\beta_n)\\
		&= \sum_{k=1}^n \int_{\R}\left(\left(\sum_{j,G,m}\abs{\omega_m^j\Scapro{\Psi_m^{j,G}}{e_k}a_k\beta}^p\right)^{q/p}\wedge 1\right)\,\rho({\rm d}\beta)\\
		&= \sum_{k=1}^n \int_{\R}\Big(\norm{a_k e_k}_{\besov{p}{s}{w}}^q\abs{\beta}^q\wedge 1\Big)\,\rho({\rm d}\beta).
	\end{align*}
By taking the limit as $n\to\infty$  we obtain 
 \begin{align*}
 	\int_{\besov{p}{s}{w}} \left(\norm{f}_{\besov{p}{s}{w}}^q \wedge 1\right)\, \mu(\d f)
 	=\sum_{k=1}^\infty \int_{\R} \left( \norm{a_ke_k}_{\besov{p}{s}{w}}^q \abs{\beta}^q \wedge 1\right)\, \rho(\d \beta),
 \end{align*}
which shows the equivalence between the Conditions \eqref{eq.hedgehog-cond-1} and \eqref{eq.hedgehog-cond-3}  and Conditions \eqref{eq:BesovMeas3} and  \eqref{eq:BesovMeas1}. 

For $p\ge 2$ we calculate, for $(i,H,l)\in\mathbb{W}^d$ and $n\in\N$, that 
	\begin{align*}
		&\int_{\R^n}\1_{B_{\R}}\left(\sum_{j,G,m}(\omega_m^j)^p \abs{\sum_{k=1}^n\Scapro{\Psi_m^{j,G}}{e_k} \beta_k}^p\right)
		\abs{\sum_{k=1}^n\Scapro{\Psi_l^{i,H}}{e_k}\beta_k}^2\,(\mu\circ\pi_{e_1,\ldots ,e_n}^{-1})({\rm d}\beta_1\cdots{\rm d}\beta_n)\\
		&=\sum_{k=1}^n\int_{\R}\1_{B_{\R}}\left(\sum_{j,G,m}(\omega_m^j)^p \abs{\Scapro{\Psi_m^{j,G}}{e_k} a_k\beta}^p\right)
		\abs{\Scapro{\Psi_l^{i,H}}{e_k}a_k\beta}^2\,\rho({\rm d}\beta)\\
		&=\sum_{k=1}^n \abs{\Scapro{\Psi_l^{i,H}}{a_k e_k}}^2\int_{\abs{\beta}\le\norm{a_k e_k}_{\besov{p}{s}{w}}^{-1}}  \beta^2\,\rho({\rm d}\beta).
	\end{align*}
Since the theorem on monotone convergence shows 
\begin{align*}
&	\sum_{j,G,m} (\omega_m^j)^p \left(\int_{\norm{f}_{\besov{p}{s}{w}}\le 1} \Scapro{\Psi_m^{j,G}}{f}^2\,\mu({\rm d}f)\right)^{p/2}\\
&\qquad\qquad	=\sum_{j,G,m} (\omega_m^j)^p\Bigg(\sum_{k=1}^\infty\abs{\Scapro{\Psi_m^{j,G}}{a_k e_k}}^2\int_{\abs{\beta}\le\norm{a_k e_k}_{\besov{p}{s}{w}}^{-1}}\beta^2\,\rho({\rm d}\beta)\Bigg)^{p/2},
\end{align*}
we obtain the equivalence between Conditions \eqref{eq.hedgehog-cond-2}  and \eqref{eq:BesovMeas2}. 
A similar calculations shows the  equivalence between Conditions \eqref{eq.hedgehog-cond-4}  and \eqref{eq:BesovMeas5}. 
\end{proof}

\begin{corollary} 
	\label{cor:HedgehogTypeSuff}
Let $p\in [1,2]$ and $(s,w)\in E_{p}$. 
A cylindrical L\'evy process $L$ of the form \eqref{eq.hedgehog} with $\{e_k\}_{k\in\N}\subseteq\Besov{p'}{-s}{-w}$ is induced by a L\'evy process in $\Besov{p}{s}{w}$  if 
	\begin{align*}
		\sum_{k=1}^\infty \int_{\R} \left( \norm{a_ke_k}_{\besov{p}{s}{w}}^p \abs{\beta}^p \wedge 1\right)\, \rho(\d \beta)<\infty.
	\end{align*}
\end{corollary}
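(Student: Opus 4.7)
The plan is to combine Remark \ref{rem:TypeSuff} with the computation already performed inside the proof of Corollary \ref{cor:HedgehogLevyMeasure}, specialising the exponent $q=p\vee 2$ to $q=p$ in the type-$p$ regime $p\in[1,2]$. Since the driving L\'evy processes $\ell_k$ are symmetric, $L$ coincides with its symmetrisation $L^S$, so by Theorem \ref{thm:Symmetric} it suffices to show that the cylindrical L\'evy measure $\mu$ of $L$ extends to a genuine L\'evy measure on $\Borel(\besov{p}{s}{w})$. As in the proof of Corollary \ref{cor:HedgehogLevyMeasure}, Lemma 4.2 in \cite{Riedle2015} and Lemma 3.10 in \cite{Kosmala2020} already provide an extension of $\mu$ to a Borel measure on $\besov{p}{s}{w}$, supported on $\bigcup_{k\in\N}\{\beta e_k:\beta\in\R\}$, whose finite-dimensional projections are given explicitly by \eqref{eq:HedgehogPartialMeasure}.

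The next step is to apply Remark \ref{rem:TypeSuff}: since $\besov{p}{s}{w}$ is isomorphic to $\ell^p(\mathbb{W}^d)$ and therefore of type $p$ whenever $p\in[1,2]$, to conclude that the extended $\mu$ is a L\'evy measure on $\besov{p}{s}{w}$ it is enough to verify
\begin{align*}
\int_{\besov{p}{s}{w}}\bigl(\norm{f}_{\besov{p}{s}{w}}^p\wedge 1\bigr)\,\mu(\d f)<\infty.
\end{align*}

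To evaluate this integral I would mimic the opening calculation in the proof of Corollary \ref{cor:HedgehogLevyMeasure}, but now with the exponent $q$ there chosen equal to $p$ instead of $p\vee 2$. Concretely, for each $n\in\N$ I would rewrite the integrand restricted to $P_n f=\sum_{k=1}^n\Scapro{e_k}{f}e_k$ as $\sum_{j,G,m}(\omega_m^j)^p\bigl|\sum_{k=1}^n\Scapro{\Psi_m^{j,G}}{e_k}\Scapro{e_k}{f}\bigr|^p$, push forward to $\R^n$ via $\pi_{e_1,\ldots,e_n}$, and then use the axial structure of $(\mu\circ\pi_{e_1,\ldots,e_n}^{-1})$ in \eqref{eq:HedgehogPartialMeasure} to collapse each term in the outer sum onto the single non-zero coordinate $\beta_k$. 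This collapses the $(j,G,m)$-sum on a single axis into $\norm{a_k e_k}_{\besov{p}{s}{w}}^p|\beta|^p$, yielding the partial-sum identity
\begin{align*}
\int_{\besov{p}{s}{w}}\bigl(\norm{P_n f}_{\besov{p}{s}{w}}^p\wedge 1\bigr)\,\mu(\d f)=\sum_{k=1}^n\int_{\R}\bigl(\norm{a_k e_k}_{\besov{p}{s}{w}}^p|\beta|^p\wedge 1\bigr)\,\rho(\d\beta).
\end{align*}
Letting $n\to\infty$ and invoking monotone convergence on both sides produces the corresponding equality for $\norm{f}_{\besov{p}{s}{w}}^p$ itself. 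Under the hypothesis of the corollary the right-hand side is finite, so the sufficient condition of Remark \ref{rem:TypeSuff} is met.

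Finally, once $\mu$ is known to be a L\'evy measure on $\besov{p}{s}{w}$, Theorem \ref{thm:Symmetric} produces the regularised L\'evy process $Y$ in $\besov{p}{s}{w}$ that induces $L$. There is no real obstacle: the only delicate point is making sure that the projection formula \eqref{eq:HedgehogPartialMeasure} is used to reduce the $\mu$-integral on Besov space to a sum of one-dimensional $\rho$-integrals without any issue at the origin, which is taken care of by the $\wedge\,1$ truncation, and that monotone convergence as $n\to\infty$ is valid, which is immediate since all integrands are nonnegative and increase in $n$.
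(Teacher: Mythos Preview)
Your proposal is correct and follows essentially the same route as the paper: invoke Remark~\ref{rem:TypeSuff} and repeat the opening calculation from the proof of Corollary~\ref{cor:HedgehogLevyMeasure} with exponent $p$ in place of $p\vee 2$. The only minor imprecision is the phrase ``$L$ coincides with its symmetrisation $L^S$''---in fact $L^S$ has cylindrical L\'evy measure $2\mu$ rather than $\mu$---but since $\mu$ is already symmetric this does not affect the argument.
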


\begin{proof}
	By Remark \ref{rem:TypeSuff} it suffices to show
$
		\int_{\Besov{p}{s}{w}}\Big(\norm{f}_{\besov{p}{s}{w}}^p\wedge 1\Big)\,\mu({\rm d}f)<\infty,
$
	which follows using the same calculation as in the proof of Corollary \ref{cor:HedgehogLevyMeasure}. 
\end{proof}

To characterise the Besov membership of a hedgehog process $L$ we introduce some indices in terms of the L\'evy measure of the  real-valued L\'evy processes $\ell_k$ in the representation \eqref{eq.hedgehog}. For this purpose, let $\rho$ be a L\'evy measure in $\R$ and define for $q\in\Rp$:
\begin{align}
\label{eq:DefSupTau}
	\overline{\tau}^{(q)}&:=\inf_{\tau\ge 0}\left\lbrace \limsup_{\xi\downarrow 0}\xi^{-\tau}\int_{B_{\R}^c}\big(\xi^q\abs{\beta}^{q}\wedge 1\big)\,\rho({\rm d}\beta)=\infty \right\rbrace ,\\
\label{eq:DefInfTau}
	\underline{\tau}^{(q)}&:=\inf_{\tau\ge 0}\left\lbrace \liminf_{\xi\downarrow 0}\xi^{-\tau}\int_{B_{\R}^c}\big(\xi^q\abs{\beta}^{q}\wedge 1\big)\,\rho({\rm d}\beta)=\infty \right\rbrace .
\end{align}
In all definitions above we apply the convention $\sup \emptyset =-\infty$ and $\inf \emptyset =\infty$. 
It is easy to see that $\overline{\tau}^{(q)}\le\underline{\tau}^{(q)}\le q$ when $\rho\neq 0$. The examples following Theorem \ref{thm:Hedgehog} show calculations of these indices in a number of standard situations.

The following proposition establishes a simple interpretation of $\overline{\tau}^{(q)}$. We recall that a L\'evy process with L\'evy measure $\rho$ has finite $p$-th moments if and only if $\int_{B_{\R}^c}\abs{\beta}^{p}\,\rho({\rm d}\beta)<\infty$.
\begin{proposition}
\label{prop:HedgeIndexCons}
For a L\'evy measure $\rho\neq 0$ on $\Borel(\R)$ define 
	\begin{align*}
		p_{\max}:=\sup\left\{p>0\colon \int_{B_{\R}^c}\abs{\beta}^{p}\,\rho({\rm d}\beta)<\infty\right\}. 
	\end{align*}
If $p_{\max}>0$ then $\overline{\tau}^{(q)}=p_{\max}\wedge q$ and thus $(p_{\max}\wedge q)\le\underline{\tau}^{(q)}\le q$ for all $q\ge 0$.
	
\end{proposition}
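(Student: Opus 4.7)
The plan is to study the function $I(\xi):=\int_{B_{\R}^c}\big(\xi^q\abs{\beta}^q\wedge 1\big)\,\rho(\d\beta)$ on $\xi\in(0,1]$. Splitting the integrand at $\abs{\beta}=1/\xi$ yields the decomposition $I(\xi)=\xi^q\int_{1<\abs{\beta}\le 1/\xi}\abs{\beta}^q\,\rho(\d\beta)+\rho(\abs{\beta}>1/\xi)$, which gives the useful crude lower bound $I(\xi)\ge\rho(\abs{\beta}>1/\xi)$. Since the text has already noted $\overline{\tau}^{(q)}\le\underline{\tau}^{(q)}\le q$, it suffices to establish the equality $\overline{\tau}^{(q)}=p_{\max}\wedge q$; the two-sided bound on $\underline{\tau}^{(q)}$ then follows from the chain $p_{\max}\wedge q\le\overline{\tau}^{(q)}\le\underline{\tau}^{(q)}\le q$.

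For the easier direction $\overline{\tau}^{(q)}\ge p_{\max}\wedge q$, I would fix any $p\in(0,p_{\max}\wedge q)$ and use the elementary inequality $(\xi\abs{\beta})^q\wedge 1\le(\xi\abs{\beta})^p$ (valid because $q\ge p$ forces $(\xi\abs{\beta})^q\le(\xi\abs{\beta})^p$ when $\xi\abs{\beta}\le 1$, while $(\xi\abs{\beta})^p\ge 1$ when $\xi\abs{\beta}>1$). Integrating yields $I(\xi)\le\xi^p\int_{\abs{\beta}>1}\abs{\beta}^p\,\rho(\d\beta)$, and the integral is finite since $p<p_{\max}$. Hence $\xi^{-p}I(\xi)$ is bounded uniformly on $(0,1]$, which forces $\overline{\tau}^{(q)}\ge p$; letting $p\uparrow p_{\max}\wedge q$ gives $\overline{\tau}^{(q)}\ge p_{\max}\wedge q$.

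When $q\le p_{\max}$ this together with $\overline{\tau}^{(q)}\le q=p_{\max}\wedge q$ closes the proof. The remaining case $q>p_{\max}$ (so $p_{\max}<\infty$) is the main difficulty, and requires sharpening the bound to $\overline{\tau}^{(q)}\le p_{\max}$. The obstacle is that I must extract quantitative tail information from the mere fact that $\int_{\abs{\beta}>1}\abs{\beta}^{p'}\,\rho(\d\beta)=\infty$ for every $p'>p_{\max}$. I would prove the tail claim: for every $\tau>p_{\max}$, $\limsup_{R\to\infty}R^{\tau}\rho(\abs{\beta}>R)=\infty$. Assuming the contrary, i.e.\ $\rho(\abs{\beta}>R)\le CR^{-\tau}$ for all $R\ge 1$ (enlarging $C$ if needed), a layer-cake computation yields, for any $p'\in(p_{\max},\tau)$,
\begin{equation*}
\int_{\abs{\beta}>1}\abs{\beta}^{p'}\,\rho(\d\beta)=p'\int_0^\infty u^{p'-1}\rho\big(\abs{\beta}>\max\{u,1\}\big)\,\d u\le C+Cp'\int_1^\infty u^{p'-1-\tau}\,\d u<\infty,
\end{equation*}
contradicting the definition of $p_{\max}$. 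Given this claim, I pick $R_n\to\infty$ with $R_n^{\tau}\rho(\abs{\beta}>R_n)\to\infty$ and set $\xi_n:=1/R_n$; then $\xi_n^{-\tau}I(\xi_n)\ge\xi_n^{-\tau}\rho(\abs{\beta}>1/\xi_n)\to\infty$, so $\overline{\tau}^{(q)}\le\tau$. Since $\tau>p_{\max}$ was arbitrary, $\overline{\tau}^{(q)}\le p_{\max}$, finishing the proof.
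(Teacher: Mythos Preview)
Your proof is correct. Both your argument and the paper's hinge on the same decomposition $I(\xi)=\xi^q\int_{1<|\beta|\le 1/\xi}|\beta|^q\,\rho(\d\beta)+\rho(|\beta|>1/\xi)$ and the same layer-cake equivalence between finiteness of $\int_{B_\R^c}|\beta|^p\,\rho(\d\beta)$ and the tail decay $\rho(|\beta|>R)=O(R^{-p})$; in particular your ``tail claim'' that $\limsup_{R\to\infty}R^\tau\rho(|\beta|>R)=\infty$ for every $\tau>p_{\max}$ is exactly the paper's computation that its auxiliary index $\overline{\tau}_2$ equals $p_{\max}$.

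The one genuine organisational difference is in the lower bound $\overline{\tau}^{(q)}\ge p_{\max}\wedge q$. The paper splits $\overline{\tau}^{(q)}=\overline{\tau}_1^{(q)}\wedge\overline{\tau}_2$ into two auxiliary indices and then proves $\overline{\tau}_1^{(q)}\ge\overline{\tau}_2\wedge q$ via a separate Fubini/tail computation on $\int_{1<|\beta|\le 1/\xi}|\beta|^q\,\rho(\d\beta)$. You bypass this entirely with the one-line pointwise inequality $(\xi|\beta|)^q\wedge 1\le(\xi|\beta|)^p$ for $p\le q$, which handles both pieces of the decomposition at once and yields the bound $I(\xi)\le\xi^p\int_{B_\R^c}|\beta|^p\,\rho(\d\beta)$ directly. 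This is a cleaner route to the same destination; the paper's two-index decomposition is more explicit about where each contribution comes from, but your argument is shorter and requires no additional estimate.
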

\begin{proof} 
		
To demonstrate this, we shall consider the following indices:
	\begin{align*}
		\overline{\tau}_1^{(q)}&:=\sup\left\lbrace \tau\ge 0\colon \limsup_{\xi\downarrow 0}\xi^{-\tau}\int_{1<\abs{\beta}\le\xi^{-1}}\xi^q\abs{\beta}^q\,\rho({\rm d}\beta)<\infty \right\rbrace \qquad \text{for }	q\in\Rp,\\
		\overline{\tau}_2&:=\sup\left\lbrace \tau> 0\colon \limsup_{\xi\downarrow 0}\xi^{-\tau}\int_{\abs{x}>\xi^{-1}}\rho({\rm d}\beta)<\infty \right\rbrace.
	\end{align*}
	We define a finite measure $\overline{\rho}:=\restr{\rho}{B_{\R}^c}$; clearly we may replace $\rho$ with $\overline{\rho}$ in the definitions of $\overline{\tau}_1^{(q)}$ and $\overline{\tau}_2$. By Markov's inequality we have, for $\xi<1$ and $p<p_{\max}$,
	\begin{align*}
		\overline{\rho}\big(\{\abs{\beta}>\xi^{-1}\}\big)
		\le \xi^{p}\int_{\R}\abs{\beta}^p\,\overline{\rho}(\d \beta)
		=\xi^{p}\int_{B_{\R}^c}\abs{\beta}^p\,\rho(\d \beta),
	\end{align*}
	thus showing $\overline{\tau}_2\ge p_{\max}$. On the other side,  for $\tau<\overline{\tau}_2$ 
	there exists a constant $C>0$ such that
	\begin{align*}
		\overline{\rho}\big(\{\abs{\beta}>t\}\big)\le C t^{-\tau}
		 \qquad\text{for all }t\ge 1.
	\end{align*}
	The tail formula for the integral shows for $0<p<\tau$ that 
	\begin{align*}
		\int_{B_{\R}^c}\abs{\beta}^p\,\rho(\d \beta)
		=\int_{\R}\abs{\beta}^p\,\overline{\rho}(\d \beta)
		=\int_0^\infty \overline{\rho}\big(\{\abs{\beta}^p>t\}\big)\,\d t
		\le\overline{\rho}(\R)+ C\int_1^\infty t^{-\frac{\tau}{p}}\,\d t<\infty,
	\end{align*}
	which enables us to conclude  $p_{\max}\ge\overline{\tau}_2$,  and hence $\overline{\tau}_2=p_{\max}$.

	Next choose some $\tau<\overline{\tau}_2\wedge q$. Fubini's theorem implies for $\xi\in (0,1)$ that 
	\begin{align*}
		\int_{1<\abs{\beta}\le\xi^{-1}}\abs{\beta}^q\,\overline{\rho}(\d \beta)
		&=\int_0^\infty \overline{\rho}\big(\{\abs{\beta}^q\1_{\{1<\abs{\beta}\le\xi^{-1}\}}>t\}\big)\,\d t\\
		&=\int_0^1 \overline{\rho}\big(\{1<\abs{\beta}\le\xi^{-1}\}\big)\,\d t+
		\int_1^{\xi^{-q}} \overline{\rho}\big(\{t^{\frac{1}{q}}<\abs{\beta}\le\xi^{-1}\}\big)\,\d t\\
		&\le \overline{\rho}\big(\{1<\abs{\beta}\le\xi^{-1}\}\big) + \int_1^{\xi^{-q}} \overline{\rho}\big(\{t^{\frac{1}{q}}<\abs{\beta}\}\big)\,\d t\\
		&\le\overline{\rho}(\R) + C \int_1^{-\xi^q} t^{-\frac{\tau}{q}}\,\d t
		\lesssim 1+\xi^{\tau-q}.
	\end{align*}
It follows  $\xi^{q-\tau}\int_{1<\abs{\beta}\le\xi^{-1}}\abs{\beta}^q\,\rho(\d \beta)<\infty$ for all $\xi<1$, 
implying $\tau\le\overline{\tau}_1^{(q)}$. As $\tau<\overline{\tau}_2\wedge q$ is arbitrary we have shown that $\overline{\tau}_1^{(q)}\ge \overline{\tau}_2\wedge q$. As $\overline{\tau}^{(q)}=\overline{\tau}_1^{(q)}\wedge\overline{\tau}_2$ and $\overline{\tau}_2=p_{\max}$, we  conclude $\overline{\tau}^{(q)}=p_{\max}\wedge q$.
\end{proof}

The following result gives conditions such that a hedgehog process is induced by a L\'evy process in a certain Besov space. 
The critical value will be the parameters $\overline{\tau}^{(\min\{p,2\})}$ and $\underline{\tau}^{(\max\{p,2\})}$. 
\begin{theorem}
	\label{thm:Hedgehog}
	Let $L$ be defined by \eqref{eq.hedgehog} and let $p>1$ and $(s,w)\in E_{p}$.
	Define
	\begin{align*}
		q_{\min}:=\inf\left\lbrace q> 0\colon \sum_{k=1}^\infty\norm{a_ke_k}_{\besov{p}{s}{w}}^q<\infty\right\rbrace.
	\end{align*}
	Then, 
	\begin{itemize}
		\item[{\rm (1)}] $L$ is induced by a L\'evy process in $\Besov{p}{s}{w}$ if 
		one of the following is satisfied: 
		\begin{itemize}
			\item[{\rm (i)}] $(s,w)\in R_{p}$;
			\item[{\rm (ii)}] $(s,w)\in R_{p}^c\;$ and  $\;q_{\min}<\overline{\tau}^{(\min\{p,2\})}$.
		\end{itemize}
		\item[{\rm (2)}] $L$ is not induced by a process in $\Besov{p}{s}{w}$  if:
        \begin{itemize}
        	\item[\,] $(s,w)\in R_{p}^c\;$ and  $\;q_{\min}>\underline{\tau}^{(\max\{p,2\})}$.
        \end{itemize}		
	\end{itemize} 
\end{theorem}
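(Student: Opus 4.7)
My plan is to treat each of the three assertions separately by applying Corollary \ref{cor:HedgehogLevyMeasure}, together with the scaling information encoded by the indices $\overline{\tau}^{(\cdot)}$ and $\underline{\tau}^{(\cdot)}$. Writing $\xi_k:=\norm{a_k e_k}_{\besov{p}{s}{w}}$, the key elementary fact I exploit throughout is that $q_{\min}<\tau$ implies $\sum_{k=1}^\infty \xi_k^\tau<\infty$ while $\tau<q_{\min}$ implies $\sum_{k=1}^\infty \xi_k^\tau=\infty$. Part (1)(i) is immediate: $(s,w)\in R_p$ makes the canonical embedding $\iota\colon L^2(\R^d)\to\Besov{p}{s}{w}$ $0$-Radonifying by Theorem \ref{thm:Radonification}, and the discussion at the end of Section \ref{sec:Regularisation} then forces $L$ to be induced by a genuine L\'evy process in $\Besov{p}{s}{w}$.

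For Part (1)(ii), I would pick $\tau\in(q_{\min},\overline{\tau}^{(\min\{p,2\})})$ and verify the hypotheses of Corollary \ref{cor:HedgehogLevyMeasure}. Setting $q:=\min\{p,2\}$, definition \eqref{eq:DefSupTau} yields $\int_{B_\R^c}(\xi^q|\beta|^q\wedge 1)\,\rho(\d\beta)\lesssim\xi^\tau$ for small $\xi$, which together with $\sum_k\xi_k^\tau<\infty$ controls the $|\beta|>1$ tails; the $|\beta|\le 1$ bulk is handled by $\int_{|\beta|\le 1}|\beta|^2\,\rho(\d\beta)<\infty$ and $\sum_k\xi_k^{p\vee 2}<\infty$ (which follows from $\tau<p\vee 2$). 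For $p\ge 2$ this verifies \eqref{eq.hedgehog-cond-1} once one notes $(\xi|\beta|)^p\wedge 1\le(\xi|\beta|)^2\wedge 1$ pointwise. For \eqref{eq.hedgehog-cond-2}, I bound $\int_{|\beta|\le\xi_k^{-1}}\beta^2\,\rho(\d\beta)\lesssim\xi_k^{\tau-2}$, rewrite $\abs{\Scapro{\Psi_m^{j,G}}{a_ke_k}}^2=\xi_k^2\abs{\Scapro{\Psi_m^{j,G}}{a_ke_k/\xi_k}}^2$, and then invoke Minkowski's inequality in $\ell^{p/2}(\mathbb{W}^d)$ with the weight $(\omega_m^j)^p$: the sum of $L^{p/2}$-norms over $k$ collapses to $\sum_k\xi_k^\tau<\infty$ because $\norm{a_ke_k/\xi_k}_{\besov{p}{s}{w}}=1$. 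For $p\in(1,2)$, \eqref{eq.hedgehog-cond-3} follows by the same tail/bulk split using the reverse inequality $(\xi|\beta|)^2\wedge 1\le(\xi|\beta|)^p\wedge 1$. Condition \eqref{eq.hedgehog-cond-4} I would treat via $1-e^{Y(\tau)}\le\min(\abs{Y(\tau)},1)$ and $1-\cos x\le (x^2/2)\wedge 2$, splitting the $\tau$-integral at a $k$-dependent threshold so that the $x^2$-regime contributes an $L^2$-moment bound (finite by the Lévy property) while the bounded regime contributes a $\rho$-tail controlled by $\overline{\tau}^{(p)}$.

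For Part (2), I pick $\tau\in(\underline{\tau}^{(\max\{p,2\})},q_{\min})$, which exists by hypothesis. Setting $q:=\max\{p,2\}$, \eqref{eq:DefInfTau} forces $\int_{B_\R^c}(\xi^q|\beta|^q\wedge 1)\,\rho(\d\beta)\ge\xi^\tau$ for all sufficiently small $\xi$. Since $\xi_k\to 0$ (and when $q_{\min}=\infty$ a separate argument shows the integrand stays bounded below along a subsequence of indices with $\xi_k\not\to 0$), it follows that
\begin{equation*}
\sum_{k=1}^\infty \int_\R \big((\xi_k|\beta|)^q\wedge 1\big)\,\rho(\d\beta)\;\ge\;\sum_{k\text{ large}} \xi_k^\tau\;=\;\infty,
\end{equation*}
because $\tau<q_{\min}$. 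For $p\ge 2$ this directly violates \eqref{eq.hedgehog-cond-1}, and for $p\in(1,2)$ the same estimate with $q=2$ violates \eqref{eq.hedgehog-cond-3}; either way, Corollary \ref{cor:HedgehogLevyMeasure} precludes $L$ from being induced in $\Besov{p}{s}{w}$.

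The hardest step in my plan is Condition \eqref{eq.hedgehog-cond-4}: the nested exponential, combined with the fact that $|\beta|^p\,\rho$ can carry a non-integrable singularity at the origin when $p\in(1,2)$, blocks any naive Fubini-style interchange and requires splitting $(0,\infty)$ into regimes where distinct bounds on $1-\cos$ become effective. By contrast, \eqref{eq.hedgehog-cond-2} reduces cleanly to the Minkowski argument sketched above, and the necessity direction collapses essentially immediately once the appropriate intermediate value of $\tau$ is chosen.
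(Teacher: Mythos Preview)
Your treatment of Part~(1)(i) and Part~(2) is essentially the paper's own argument: (1)(i) via $0$-Radonification (Theorem~\ref{thm:Radonification}), and (2) by picking an intermediate exponent $\tau\in(\underline{\tau}^{(\max\{p,2\})},q_{\min})$ and forcing divergence of \eqref{eq.hedgehog-cond-1} or \eqref{eq.hedgehog-cond-3} through the liminf bound. One cosmetic point on (2): the side case should be tied to whether $\xi_k\to 0$, not to whether $q_{\min}=\infty$; if $\xi_k\not\to 0$ then infinitely many terms of the sum are bounded below by a fixed positive constant (since $\rho\neq 0$) and the sum diverges regardless, while if $\xi_k\to 0$ your main argument applies even when $q_{\min}=\infty$.

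For Part~(1)(ii) you take a genuinely different and harder route than the paper. Your direct verification of Corollary~\ref{cor:HedgehogLevyMeasure} is correct for $p\ge 2$: the Minkowski argument in $\ell^{p/2}(\mathbb{W}^d)$ for \eqref{eq.hedgehog-cond-2}, reducing to $\sum_k\xi_k^\tau<\infty$ via $\int_{|\beta|\le\xi_k^{-1}}\beta^2\,\rho(\d\beta)\lesssim\xi_k^{\tau-2}$, is clean. For $p\in(1,2)$, however, your sketch for \eqref{eq.hedgehog-cond-4} is incomplete and you rightly flag it as the hardest step. The paper sidesteps this entirely: since $\Besov{p}{s}{w}\cong\ell^p(\mathbb{W}^d)$ has Rademacher type $\min\{p,2\}$, Proposition~7.1.16 of \cite{Hytonen2017} gives that $\sum_k a_ke_k\ell_k(1)$ converges in $q$-th mean in $\Besov{p}{s}{w}$ whenever $q\le\min\{p,2\}$, $E|\ell_1(1)|^q<\infty$, and $(\xi_k)\in\ell^q$. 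By Proposition~\ref{prop:HedgeIndexCons} we have $\overline{\tau}^{(\min\{p,2\})}=p_{\max}\wedge\min\{p,2\}$, so any $q\in(q_{\min},\overline{\tau}^{(\min\{p,2\})})$ satisfies all three requirements. This disposes of every $p>1$ in one stroke. Alternatively, for $p\in(1,2)$ you could simply invoke Corollary~\ref{cor:HedgehogTypeSuff}, whose single hypothesis $\sum_k\int_\R\big((\xi_k|\beta|)^p\wedge 1\big)\,\rho(\d\beta)<\infty$ is exactly what your tail/bulk split already verifies; either way you avoid touching \eqref{eq.hedgehog-cond-4}.
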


\begin{proof}
	Part (1): the first alternative condition follows from the 0-Radonification in Theorem \ref{thm:Radonification}. To show the second alternative, we note that $\Besov{p}{s}{w}$ is of type $\min\{p,2\}$ by the isometry with $\ell^p(\mathbb{W}^d)$ defined in \eqref{eq:Isomorphism}. By Proposition 7.1.16 in \cite{Hytonen2017}, if $q\le \min\{p,2\}$ is such that $E\abs{\ell_1}^q<\infty$ then $\big(\norm{a_k e_k}_{\besov{p}{s}{w}}\big)_{k\in\N}\in\ell^q(\R)$ implies that $\sum_{k\in\N} a_ke_k\ell_k(1)$ converges in $\Besov{p}{s}{w}$ in $q$-th mean. This limit is a L\'evy procss in $\Besov{p}{s}{w}$ which induces $L$. 
	Finally we note that, by Proposition~\ref{prop:HedgeIndexCons}, $\ell_1$ has moments of any order smaller than $\overline{\tau}^{(q)}$ for any $q\le 2$.
	
	Part (2): by Corollary \ref{cor:HedgehogLevyMeasure}, $L$ is not induced by a L\'evy process in $\Besov{p}{s}{w}$ if
	\begin{align}
	\label{eq:HedgehogPart1}
		\sum_{k=1}^\infty \int_{\R} \left(\norm{a_k e_k}_{\besov{p}{s}{w}}^{\max\{2,p\}}\abs{\beta}^{\max\{2,p\}}\wedge 1\right)\,\rho({\rm d}\beta)=\infty.
	\end{align}
Due to the hypothesis,  we can choose $q> \underline{\tau}^{(\max\{2,p\})}$
such that $\big(\norm{a_k e_k}_{\besov{p}{s}{w}}\big)_{k\in\N}\notin\ell^{q}(\R)$. 
The very definition of  $\underline{\tau}^{(\max\{2,p\})}$ guarantees that there exists a constant $K$ such that, for large enough $k$, we have 
	\begin{align*}
		\int_{B_{\R}^c} \big(\norm{a_k e_k}_{\besov{p}{s}{w}}^{\max\{2,p\}}\abs{\beta}^{\max\{2,p\}}\wedge 1\big)\,\rho({\rm d}\beta) 
		&\ge K\norm{a_k e_k}_{\besov{p}{s}{w}}^{q},
	\end{align*}
which establishes \eqref{eq:HedgehogPart1}. 
\end{proof}
\begin{remark}
\label{rem:AdverseBetaGamma}
We may conclude that $(s,w)\in R_p$ implies $q_{\min}\le\underline{\tau}^{(\max\{2,p\})}$, as otherwise Part (1) and (2) of Theorem \ref{thm:Hedgehog} would contradict. This equality can also be proven analytically. 
\end{remark}

The first two examples we present show that, in the case each $\ell_k$ has moments of all orders, the critical summability is of a particularly simple form.
\begin{example}
	Let $L$ be a cylindrical L\'evy process of the form \eqref{eq.hedgehog} with $\rho=\delta_1$; thus each of the $\ell_k$ is a Poisson process with unit intensity and Condition \eqref{eq:HedgehogWeightSum} is satisfied for  $(a_k)_{k\in\N}\in\ell^\infty(\N)$. As $\rho$ has moments of all orders, Proposition \ref{prop:HedgeIndexCons} implies $\overline{\tau}^{(q)}=\underline{\tau}^{(q)}=q$ for each $q\in\Rp$. Thus, the critical summability needs to satisfy $q_{\min}<p\wedge 2$ for inclusion and $q_{\min}>p\vee 2$ for exclusion.
\end{example}

\begin{example}
	Let $\rho(\d\beta)=\1_{\{\beta\neq 0\}}\abs{\beta}^{-\zeta}e^{-\abs{\beta}}\,\d\beta$ for some $\zeta\in(0,3)$, this gives rise to tempered stable processes. For $\zeta=1$ this gives the symmetric Gamma process and for $\zeta=\tfrac{3}{2}$ this gives the symmetric inverse Gaussian process. 
	For $q>\zeta-1$ we have
	\begin{align*}
		\int_{\R}\abs{\beta}^{q-\zeta}e^{-\abs{\beta}}\,\d \beta=2\Gamma(q-\zeta+1)<\infty;
	\end{align*}
	 we conclude that $\rho$ has moments of all orders and thus again we have $\overline{\tau}^{(q)}=\underline{\tau}^{(q)}=q$ for each $q\in\Rp$. Thus, the critical summability again needs to satisfy $q_{\min}<p\wedge 2$ for inclusion and $q_{\min}>p\vee 2$ for exclusion.
\end{example}

Next we examine the symmetric-$\alpha$-stable case, where the limits on moments comes into play.
\begin{example}\label{ex.stable}
Let $\rho(\d \beta)
=\1_{\{\beta\neq 0\}} \abs{\beta}^{-1-\alpha}\d \beta$ for some $\alpha\in (0,2)$. Condition \ref{eq:HedgehogWeightSum} is satisfied if and only if $(a_k)_{k\in\N}\in\ell^{2\alpha/(2-\alpha)}(\R)$; see Example 4.5 in \cite{Riedle2015}. It follows by direct computation that $\overline{\tau}^{(q)}=\underline{\tau}^{(q)}=q\wedge\alpha$ for each $q\in\Rp$. 

Let $p>1$ and $(s,w)\in E_{p}$.
In this case, we obtain the following dichotomy in the critical regime $(s,w)\in R_{p}^c\cap E_{p}$: 
\begin{itemize} 
	\item[{\rm (a)}]  $L$ is induced by a L\'evy process $Y$ in $\Besov{p}{s}{w}$  if  $q_{\min}<p\wedge\alpha$;
 	\item[{\rm (b)}]  $L$ is not induced by a process in $\Besov{p}{s}{w}$  if  $q_{\min}>\alpha$.
\end{itemize}
\end{example}

The following example gives a construction whereby $\underline{\tau}^{(q)}\neq \overline{\tau}^{(q)}$.
\begin{example}
	Let $\alpha_1\in(1,2)$ and $\alpha_2\in(\alpha_1,2)$.
	Let $\rho$ be given by 
	\begin{align*}
		\rho(\d \beta)=\sum_{k=0}^\infty\left(\1_{\{(2k,2k+1]\}}(\beta)\abs{\beta}^{-1-\alpha_1}\,\d \beta + \1_{\{(2k+1,2k+2]\}}(\beta)\abs{\beta}^{-1-\alpha_2}\,\d \beta\right).
	\end{align*}
	Then direct calculations show that $\overline{\tau}^{(q)}=q\wedge\alpha_1$ for each $q\in\Rp$ and $\underline{\tau}^{(q)}=\alpha_2$ for each $q\ge 2$.
	Let $p>1$ and $s,w\in E_p$.
	In this case, we obtain for $(s,w)\in R_p^c\cap E_p$: 
	\begin{itemize} 
	\item[{\rm (a)}]  $L$ is induced by a L\'evy process $Y$ in $\Besov{p}{s}{w}$  if  $q_{\min}<p\wedge\alpha_1$;
	\item[{\rm (b)}]  $L$ is not induced by a process in $\Besov{p}{s}{w}$  if  $q_{\min}>\alpha_2$.
	\end{itemize}		
\end{example}

\begin{example}
	Let $\alpha\in(0,2)$ and let $\rho$ be given by 
	\begin{align*}
		\rho(\d \beta)=\1_{\{\beta\neq 0\}} \abs{\beta}^{-1-\alpha}v(\abs{\beta})\d \beta,
	\end{align*}
	where $v$ is a slowly varying function; see e.g.\ Definition 1.2.1 in \cite{Bingham1989}.
	An application of Proposition 1.5.10 in \cite{Bingham1989} shows that $\overline{\tau}^{(q)}=q\wedge\alpha$ for each $q\le 2$. However, it is known, see  \cite[p.16]{Bingham1989}, that there exist slowly varying functions $v$ such that $\liminf_{\beta\to\infty} v(\beta)=0$ and $\limsup_{\beta\to\infty} v(\beta)=\infty$.  Thus, we cannot in general improve on the bound $q\wedge\alpha\le\underline{\tau}^{(q)}\le q$ for this class of processes, which form a subclass of  subexponential L\'evy processes.
\end{example}

\subsubsection{Hedgehog process defined on wavelet basis}
We may further analyse the symmetric $\alpha$-stable case by selecting an admissible wavelet basis of $\Besov{p}{s}{w}$ as the orthonormal basis of $L^2(\R^d)$ in the representation \eqref{eq.hedgehog} of the hedgehog cylindrical L\'evy process. 
This will allow us to construct counterexamples required to complete the proof of Theorem \ref{thm:Radonification}.

Let $\Psi=\{\Psi_m^{j,G}\colon (j, G, m)\in\mathbb{W}^d\}$ be an admissible basis for $\Besov{p}{s}{w}$
    for some $p>1$ and $(s,w)\in E_p$ and let $(\ell_m^{j,G})_{(j,G,m)\in\mathbb{W}^d}$ be a family of independent identically distributed canonical $\alpha$-stable processes for some $\alpha \in (0,2)$, i.e.  $\rho(\d x)=\1_{\{ x\neq 0\}} \abs{x}^{-1-\alpha}\d x$. We consider a cylindrical L\'evy process $L$ of the form 
   \begin{align}\label{eq.hedgehog-Psi}
   	L(t)f=\sum_{j,G,m} \Scapro{f}{\Psi_m^{j,G}} a_m^{j,G} \ell_m^{j,G}
   	\qquad\text{for all }f\in L^2(\R^d), \, t\ge 0.
   \end{align}
As in Example \ref{ex.stable}, Condition \eqref{eq:HedgehogWeightSum} is satisfied if $\big(a_m^{j,G}\big)_{j,G,m}\in\ell^{\frac{2\alpha}{2-\alpha}}(\mathbb{W}^d)$.

The following Proposition allows us to determine sharp boundaries for each $p>1$ of the 0-Radonification region $R_p$ of the $(s,w)$ plane in the parameter space defining the weighted Besov spaces.
\begin{proposition}
\label{prop:AlphaStable}
	Let $p>2$ and $(s,w)\in E_p\setminus R_p$. Then for any $\alpha\in(0,2)$ there exists a sequence $\big(a_m^{j,G}\big)_{j,G,m}\in\ell^{\frac{2\alpha}{2-\alpha}}(\mathbb{W}^d)$ such that $L$ as constructed in \eqref{eq.hedgehog-Psi} is not induced by a process in $\Besov{p}{s}{w}$.
\end{proposition}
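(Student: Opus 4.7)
The plan is to apply Corollary~\ref{cor:HedgehogLevyMeasure}. Thanks to the wavelet definition of the Besov norm, $\norm{a_m^{j,G}\Psi_m^{j,G}}_{\besov{p}{s}{w}}=\abs{a_m^{j,G}}\omega_m^j$. For the canonical $\alpha$-stable driving measure, a substitution $u=\abs{a_m^{j,G}}\omega_m^j\beta$ reduces the integral in Condition~\eqref{eq.hedgehog-cond-1}, which is necessary for induction since $p>2>\alpha$, to a finite constant times $(\abs{a_m^{j,G}}\omega_m^j)^\alpha$. Hence, to prove that $L$ is not induced in $\Besov{p}{s}{w}$ it suffices to produce a sequence $(a_m^{j,G})\in\ell^{r}(\mathbb{W}^d)$ with $r:=2\alpha/(2-\alpha)$ for which
\begin{align*}
	\sum_{j,G,m} \bigl(\abs{a_m^{j,G}}\omega_m^j\bigr)^\alpha = \infty.
\end{align*}

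Because $R_p^c=\{s\ge -d+d/p\}\cup\{w\ge -d/2\}$, the region $E_p\setminus R_p$ splits into two overlapping subcases and I would exhibit a tailored sequence in each. Fix $G_0\in G^j$ valid for every $j\ge 0$. In the subcase $s\ge -d+d/p$, I would take the multi-scale sequence
\begin{align*}
a_m^{j,G} := 2^{-jd/r}(1+j)^{-1/\alpha}\,\1_{\{\abs{m}\le 2^j,\,G=G_0\}}.
\end{align*}
Counting $\#\{m\in\Z^d:\abs{m}\le 2^j\}\eqsim 2^{jd}$ yields $\sum\abs{a_m^{j,G}}^r\eqsim\sum_j(1+j)^{-r/\alpha}<\infty$ since $r>\alpha$. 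For $\abs{m}\le 2^j$ and $w\le 0$ the factor $(1+2^{-2j}\abs{m}^2)^{w/2}$ lies between positive constants, so $(\omega_m^j)^\alpha\eqsim 2^{j\alpha(s-d/p+d/2)}$, and the $\alpha$-sum collapses after elementary algebra to $\sum_j 2^{j\alpha(s+d-d/p)}(1+j)^{-1}$, which diverges for every $s\ge -d+d/p$.

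In the complementary subcase $w\ge -d/2$, I would concentrate on a single scale and take
\begin{align*}
a_m^{j,G} := (1+\abs{m})^{-d/r}\bigl(\log(2+\abs{m})\bigr)^{-1/\alpha}\,\1_{\{j=0,\,G=G_0\}}.
\end{align*}
Summing in polar form, $\sum\abs{a_m^{j,G}}^r\eqsim\sum_{R\ge 1}R^{-1}(\log R)^{-r/\alpha}<\infty$, while $\omega_m^0\eqsim(1+\abs{m})^w$ turns the $\alpha$-sum into $\sum_{R\ge 1}R^{\alpha(w+d/2)-1}(\log R)^{-1}$, which is divergent for every $w\ge -d/2$.

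The chief difficulty lies precisely on the boundary components $\{s=-d+d/p\}$ and $\{w=-d/2\}$ of $R_p$ inside $E_p$, where the polynomial weights are critical and neither the $\ell^r$ constraint nor the failure of \eqref{eq.hedgehog-cond-1} has any slack. The logarithmic correction factors $(1+j)^{-1/\alpha}$ and $(\log(2+\abs{m}))^{-1/\alpha}$ are tuned precisely so that, after raising to the power $r$, the logarithmic exponent $r/\alpha$ exceeds $1$, yet after raising to the power $\alpha$ it equals $1$ exactly, leaving a harmonic-type divergent series in the $\alpha$-sum. Away from these boundaries there is genuine room to spare and plain power-law coefficients would suffice.
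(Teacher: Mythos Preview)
Your proof is correct and takes a genuinely more explicit route than the paper's. Both arguments reduce to the same necessary condition from Corollary~\ref{cor:HedgehogLevyMeasure}: for $p\ge 2$ and the $\alpha$-stable measure $\rho$, Condition~\eqref{eq.hedgehog-cond-1} is equivalent to $\sum_{j,G,m}(\lvert a_m^{j,G}\rvert\omega_m^j)^\alpha<\infty$, so exhibiting a sequence in $\ell^{2\alpha/(2-\alpha)}$ for which this sum diverges suffices. The difference lies in \emph{how} such a sequence is produced. The paper argues abstractly: by $\ell^p$--$\ell^{p'}$ duality, the sum $\sum\lvert a_m^{j,G}\omega_m^j\rvert^\alpha$ is finite for \emph{every} $(a_m^{j,G})\in\ell^r$ if and only if $\bigl((\omega_m^j)^\alpha\bigr)\in(\ell^{r/\alpha})^\ast$, and then a separate summability lemma (Lemma~\ref{lem:WeightSum}) determines exactly when $(\omega_m^j)\in\ell^k$, showing failure precisely on $E_p\setminus R_p$; the conclusion is then routed through Theorem~\ref{thm:Hedgehog} and Example~\ref{ex.stable}. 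You instead construct the bad sequences by hand, with the logarithmic factors $(1+j)^{-1/\alpha}$ and $(\log(2+\lvert m\rvert))^{-1/\alpha}$ tuned so that the $\ell^r$-norm converges (exponent $r/\alpha>1$) while the $\alpha$-sum lands exactly on a harmonic-type divergent series at the critical boundaries $s=-d+d/p$ and $w=-d/2$. Your approach is self-contained, avoids both the duality argument and the auxiliary weight-summability lemma, and applies Corollary~\ref{cor:HedgehogLevyMeasure} directly rather than via Theorem~\ref{thm:Hedgehog}; the paper's approach is less computational and explains structurally why the threshold sits exactly at $\partial R_p$.
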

We first state an intermediate result on the summability of the Besov space weights.
\begin{lemma}
\label{lem:WeightSum}
 Let $\omega_m^j=\omega_m^j(p,s,w)$ be the wavelet weight constants for $\Besov{p}{s}{w}$ for some $p>0$, $s<\tfrac{d}{p}-\tfrac{d}{2}$ and $w<0$. Then $\big(\omega_m^j\big)_{j,G,m}\in\ell^k(\mathbb{W}^d)$  for some $k>0$ if and only if
	\begin{align*}
		k>\max\left\lbrace -\frac{d}{w}, \frac{2dp}{2d-dp-2ps}\right\rbrace.
	\end{align*}
\end{lemma}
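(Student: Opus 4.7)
The plan is to estimate the sum $\sum_{j,G,m}(\omega_m^j)^k$ by iterated summation, isolating the dependence on $m$ from that on $j$ via the scaling structure of the weights. Using the definition \eqref{eq:Weights},
\begin{align*}
\sum_{j,G,m}(\omega_m^j)^k = \sum_{j\in\Zp}\abs{G^j}\,2^{jk(s-d/p+d/2)}\, S_j, \qquad S_j:=\sum_{m\in\Z^d}\big(1+2^{-2j}\abs{m}^2\big)^{kw/2}.
\end{align*}
Since $1\le \abs{G^j}\le 2^d$, the cardinality $\abs{G^j}$ is irrelevant to convergence, so the task reduces to understanding $S_j$ and then summing in $j$.

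The first key step is the analysis of $S_j$. For fixed $j$, the summand decays at infinity like $2^{-jkw}\abs{m}^{kw}$, so the integral test shows that $S_j<\infty$ if and only if $kw<-d$, which—since $w<0$—is equivalent to $k>-d/w$. In this regime, interpreting the sum as a Riemann sum on the lattice $2^{-j}\Z^d$ yields the asymptotic
\begin{align*}
S_j \asymp 2^{jd}\int_{\R^d}(1+\abs{y}^2)^{kw/2}\,\d y \asymp 2^{jd} \qquad \text{as }j\to\infty,
\end{align*}
which is precisely the statement used in the proof of \cite[Th.~3]{Fageot2016} and applied several times already in the present paper. If instead $k\le -d/w$, then every individual $S_j$ is already $+\infty$, so the total sum diverges.

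The second step is to combine these estimates. Assuming $k>-d/w$, the total sum is equivalent (in the sense of $\asymp$) to the geometric series $\sum_{j\in\Zp}2^{j[k(s-d/p+d/2)+d]}$, which converges if and only if the exponent is strictly negative, i.e.\ $k(s-d/p+d/2)+d<0$. Because the assumption $s<d/p-d/2$ makes the bracket $s-d/p+d/2$ strictly negative, this inequality rearranges to
\begin{align*}
k>\frac{d}{d/p-d/2-s}=\frac{2dp}{2d-dp-2ps}.
\end{align*}
Thus finiteness of $\sum_{j,G,m}(\omega_m^j)^k$ is equivalent to the simultaneous validity of $k>-d/w$ and $k>\tfrac{2dp}{2d-dp-2ps}$, which is the claimed threshold. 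I do not anticipate a genuine obstacle: the only non-trivial input is the asymptotic $S_j\asymp 2^{jd}$, which has already been invoked in earlier proofs, so it suffices to quote it.
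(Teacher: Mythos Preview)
Your proposal is correct and follows essentially the same approach as the paper: you separate the sum into the inner $m$-sum $S_j$ and the outer $j$-sum, invoke the same result from \cite{Fageot2016} for the finiteness and asymptotics of $S_j$, and then reduce to a geometric series in $j$. If anything, your use of the two-sided estimate $S_j\asymp 2^{jd}$ is slightly more explicit than the paper's $\O(2^{jd})$, which is what is actually needed for the ``only if'' direction.
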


\begin{proof}
	We must assess the convergence of
	\begin{align*}
		\sum_{j\ge 0}2^{jk(s-\frac{d}{p}+\frac{d}{2})}\sum_{G\in G^j}\sum_{m\in\Z^d}(1+2^{-2j}\abs{m}^2)^{\frac{kw}{2}}.
	\end{align*}
	We first consider $	S_j:=\sum_{m\in\Z^d}(1+2^{-2j}\abs{m}^2)^{\frac{kw}{2}}$.
	We have $S_j<\infty$ for each $j$ if and only if $kw<-d$ (see the proof of \cite[Th.~3]{Fageot2016}) which gives the first term in the maximum above, recalling that $w<0$.
	If $S_j<\infty$ for each $j$, then $S_j$ is asymptotically $\O(2^{jd})$ as $j\to\infty$ according to \cite[Th.~3]{Fageot2016}, and thus
	\begin{align*}
		\sum_{j\ge 0}2^{jk(s-\frac{d}{p}+\frac{d}{2})}\sum_{G\in G^j}S_j
		&= 2^dS_0 +(2^d-1)\sum_{j\ge 1}2^{jk(s-\frac{d}{p}+\frac{d}{2})}S_j,
	\end{align*}
	which is finite if and only if $k(s-\tfrac{d}{p}+\tfrac{d}{2})<-d$. As  $k>0$ and $s-\tfrac{d}{p}+\tfrac{d}{2}<0$ this condition is equivalent to $k>\tfrac{-d}{s-\frac{d}{p}+\frac{d}{2}}$, which completes the proof.
\end{proof}

\begin{proof}[Proof of Proposition \ref{prop:AlphaStable}]
	We note that $\norm{\Psi_m^{j,G}}_{\besov{p}{s}{w}}=\omega_m^j$, where $\omega_m^j=\omega_m^j(p,s,w)$ are the weight constants for $\Besov{p}{s}{w}$, and we have $\big(\omega_m^j\big)_{j,G,m}\in\ell^\infty(\mathbb{W}^d)$ as $(s,w)\in E_p$. 
	
	For $0<q<\tfrac{2\alpha}{2-\alpha}$, the sum
	\begin{align*}
		\sum_{j,G,m}\norm{a_m^{j,G}\Psi_m^{j,G}}_{\Besov{p}{s}{w}}^q
		=\sum_{j,G,m}\abs{a_m^{j,G}\omega_m^{j}}^q
	\end{align*}
	is finite for every $\big(a_m^{j,G}\big)_{j,G,m}\in\ell^{\frac{2\alpha}{2-\alpha}}(\mathbb{W}^d)$ if and only if
	\begin{align}
	\label{eq:AlphaWeightSum}
		\big(\abs{\omega_m^j}^q\big)_{j,G,m}\in\big(\ell^{\frac{2\alpha}{q(2-\alpha)}}(\mathbb{W}^d)\big)^\ast=\ell^{\frac{2\alpha}{2\alpha+\alpha q-2q}}(\mathbb{W}^d).
	\end{align}
	Since $p>2$, the assumption $(s,w)\in E_p$  implies $s<\tfrac{d}{p}-\tfrac{d}{2}$ and $w\le 0$ according to Proposition~\ref{prop:LqEmbeddings}. For the case $w=0$, we note that $\sum_{j,G,m}(\omega_m^j)^k=\infty$ for any $k>0$, and thus there exists $\big(a_m^{j,G}\big)_{j,G,m}\in\ell^{\frac{2\alpha}{2-\alpha}}(\mathbb{W}^d)$ such that $\sum_{j,G,m}\abs{a_m^{j,G}\omega_m^{j}}^\alpha=\infty$.
	We continue to consider the case $w<0$.
	By applying Lemma \ref{lem:WeightSum}, we see that \eqref{eq:AlphaWeightSum} is satisfied if and only if
	\begin{align*}
		\frac{2\alpha q}{2\alpha+\alpha q-2q}>\max\left\lbrace -\frac{d}{w}, \frac{2dp}{2d-dp-2ps}\right\rbrace.
	\end{align*}
	As $q<\tfrac{2\alpha}{2-\alpha}$, 
	we have $2\alpha+\alpha q+2q>0$, and thus
	we have
	\begin{align}
	\label{eq:AlphaHedgehog1}
		\frac{2\alpha q}{2\alpha+\alpha q-2q}>-\frac{d}{w}
		\Leftrightarrow
		q>\frac{2\alpha d}{2d-\alpha d- 2\alpha w},
	\end{align}
	where we note that $2d-\alpha d- 2\alpha w>0$ as $\alpha<2$ and $w<0$.
	Furthermore, as $s<\tfrac{d}{p}-\tfrac{d}{2}$ we have $2d-dp-2ps>0$ and so
	\begin{align}
	\label{eq:AlphaHedgehog2}
		\frac{2\alpha q}{2\alpha+\alpha q-2q}>\frac{2dp}{2d-dp-2ps}
		\Leftrightarrow
		q>\frac{\alpha dp}{\alpha d-\alpha dp-\alpha ps + dp}
	\end{align}
	where we have $\alpha d-\alpha dp-\alpha ps + dp>dp(1-\tfrac{\alpha}{2})>0$.
	Taking $q=\alpha$, we see that there exists $\big(a_m^{j,G}\big)_{j,G,m}\in\ell^{\frac{2\alpha}{2-\alpha}}(\mathbb{W}^d)$ such that $\sum_{j,G,m}\abs{a_m^{j,G}\omega_m^{j}}^\alpha=\infty$ when either
	$w\ge -\frac{d}{2}$ by \eqref{eq:AlphaHedgehog1}, or
	$s\ge -d+\frac{d}{p}$ by \eqref{eq:AlphaHedgehog2}. 
	
	By referring to the conditions shown in Example \ref{ex.stable}, it follows that if $L$ in \eqref{eq.hedgehog-Psi} is constructed using such a sequence $\big(a_m^{j,G}\big)_{j,G,m}$ with $\sum_{j,G,m}\abs{a_m^{j,G}\omega_m^{j}}^\alpha=\infty$, the summability index $q_{\min}$ as defined in Theorem \ref{thm:Hedgehog} has $q_{\min}>\alpha$, and so $L$ is not induced by a process in $\Besov{p}{s}{w}$.
\end{proof}

\end{document}